\numberwithin{equation}{section}
\theoremstyle{plain}
\newtheorem{theorem}{Theorem}[section]
\newtheorem{lemma}[theorem]{Lemma}
\newtheorem*{de-lemma}{Lemma}
\newtheorem{proposition}[theorem]{Proposition}
\theoremstyle{remark}
\newtheorem*{remark}{Remark}
\theoremstyle{definition}
\newcommand{\Z}{\mathbb{Z}}
\newcommand{\dd}{\mathrm{d}}
\newcommand{\R}{\mathbb{R}}
\newcommand{\ve}{\epsilon}
\begin{document}

\title[Phase transition model]{Energy minimizers in a periodic phase transition model of light-matter interaction in nematic liquid crystals}

\author{Panayotis Smyrnelis}
\address{Department of Mathematics, University of Athens, 15784 Athens, Greece},

\email{smpanos@math.uoa.gr}

\author{Marcel G. Clerc}
\address{Departamento de F\'{i}sica and Millennium Institute for Research in Optics,
FCFM, Universidad de Chile, Casilla 487-3, Santiago, Chile, FCFM}
\email{marcel@dfi.uchile.cl}

\author{Manuel Diaz-Zuniga}
\address{Departamento de F\'{i}sica and Millennium Institute for Research in Optics,
FCFM, Universidad de Chile, Casilla 487-3, Santiago, Chile, FCFM}
\email{manudiaz@ug.uchile.cl}

\author{Micha{\l } Kowalczyk}
\address{Departamento de Ingenier\'{\i}a Matem\'atica and Centro
de Modelamiento Matem\'atico (UMI 2807 CNRS), Universidad de Chile, Casilla
170 Correo 3, Santiago, Chile.}
\email {kowalczy@dim.uchile.cl}



\begin{abstract} In this paper we complete the study of global minimizers of a forced, non autonomous, one dimensional, phase transition 
model, initiated in \cite{kink}. Motivated by the recent findings in \cite{opt}, revealing new configurations of topological structures in
light, we consider a forcing term having two periods. We show that depending on the strength of the forcing, at most two thresholds that determine the structure of the minimizers (kinks) are attained. These kinks are now a combination of the previous types encountered in \cite{kink}, and they may have at most three zeros. The existence of these complex types of phase transition follows from a periodic one dimensional model 
of matter-light interaction in nematic liquid crystal based on a thin sample limit  of the Oseen-Frank energy. We show that the qualitative behaviour of  global minimizers  is consistent with the original model. 
\end{abstract}

\maketitle

\section{Introduction}

\subsection{Physical motivation}
Optical vortices have garnered attention because of their unique topological characteristics and
applications ranging from communications \cite{m2,m3}, astronomical \cite{m4} and macroscopic processing \cite{nnn},
quantum computing \cite{m6,m7}, orbital angular momentum lasers \cite{m8}, to manipulating micrometric particles \cite{m9,m10,m11}. Various methods have been proposed for vortex
generation. For instance, in a suitable experimental set up \cite{Barboza2012, Barboza2015A, LCLV-Vortex2015,PhysRevLett.111.093902, clerc2} involving a liquid crystal sample, a laser and a
photoconducting cell one can observe light defects such as kinks, domain walls and vortices. In most of these devices a Gaussian light beam is used to illuminate the liquid crystal light valve, and induce an umbilical defect in
the liquid crystal film within the illuminated central region
\cite{LCLV-Vortex2015}. The main feature of this sytem is that as the light intensity varies, the central vortex (which is called standard vortex) 
becomes unstable and shifts to the non-illuminated
area, resulting in a shadow vortex \cite{clerc2}. By deriving an amplitude equation from first principles, we could figure out the origin and relationships of these vortices, and confirm theoretically these experimental observations \cite{symb}.

The present paper is motivated by some recent findings \cite{opt} revealing new configurations of complex topological structures in
light, generated by illuminations with other properties. Namely, by using a donut-type light beam, the system induces \emph{optical vortex triad loops}, with configurations of various types (standard, shadow,
and Rayleigh) in its equilibrium state. These
loops consist of a Rayleigh vortex at the center, surrounded
by two standard or shadow vortices in the illuminated
ring. Again, the shift between the two standard or shadow vortices is due to the variation of the light intensity. The scope of the paper is to explain mathematically these new results.

\subsection{The mathematical model}

To describe the energy of  the illuminated liquid crystal light valve, we consider  the Oseen-Frank model in the vicinity of the
Fr\'{e}edericksz transition. It is explained in \cite{kink} how this problem can be reduced to the study of the following  Ginzburg-Landau type functional: 
\begin{equation}
\label{funct00}
E(u)=\int_{\R^2}\frac{1}{2}|\nabla u|^2-\frac{1}{2\epsilon^2}\mu(x)|u|^2+\frac{1}{4\epsilon^2}|u|^4-\frac{\alpha}{\epsilon} f(x)\cdot u,
\end{equation}
where $u=(u_1,u_2)\in  H^1(\R^2,\R^2)$ and $\epsilon>0$, $\alpha\geq 0$ are real parameters. In the physical context, the functions $\mu$ and $f$ are specific:
\begin{subequations}
\begin{equation}\label{beam1}
\mu(x)=e^{\,-|x|^2}-\chi, \qquad \mbox{with some}\ \chi\in (0,1), \qquad f(x)=-\frac{1}{2}\nabla \mu(x),
\end{equation} 
for a light beam with a Gaussian intensity profile, as described in \cite{clerc2}, or
\begin{equation}\label{beam2}
\mu(x)=|x|^4e^{\,-|x|^2}-\chi, \qquad \mbox{with some}\ \chi\in (0,4 e^{-2}), \qquad f(x)=-\frac{1}{2}\nabla \mu(x), 
\end{equation}
\end{subequations}
for a light beam with a donut-shaped intensity profile, as described in \cite{opt}, which is the case under study in the present paper.

Physically, the order parameter  $u$  represents the projection of the average molecular orientation inside the sample, which results from the  interaction between the  laser beam of profile given by $\mu$, and the nematic liquid crystal sample with the photoconducting cell mounted on top of it. This cell generates  electric field whose small, vertical component is described  above by $f$. The parameter $\alpha$ is non dimensional and characterizes the intensity of the laser beam.

In our previous work \cite{symb}, we already established that in the context of the Gaussian light beam (cf. \eqref{beam1}), the structures observed experimentally have their analogues in the global  minimizers of \eqref{funct00}. In particular, we could prove that when $\alpha=0$ the global minimizer is a vortex free state, and when $\alpha\sim \epsilon|\ln\epsilon|^2$ the global minimizer has (at least) one vortex that looks like the standard Ginzburg-Landau vortex, and is located at the center of the illuminated region, where $\mu>0$.  On the other hand, for smaller values of $\alpha\sim \epsilon|\ln\epsilon|$, the global minimizer has a vortex which is located at the border of the illuminated region, and cannot be easily associated with the standard vortex. We call this new type of defect a shadow vortex. Additionally, the amplitude of the shadow vortex is very small, of order $O(\epsilon^{1/3})$, in contrast with the standard vortex whose amplitude is of order $O(1)$.

Actually, before studying the minimizers of the two dimensional energy functional \eqref{funct00}, we focused on the equivalent one dimensional model, described by the functional  
\begin{equation}
\label{funct 0}
E(u)=\int_{\R}\frac{\epsilon}{2}|u_x|^2-\frac{1}{2\epsilon}\mu(x)u^2+\frac{1}{4\epsilon}|u|^4-\alpha f(x)u,
\end{equation}
where $u\in  H^1(\R)$, while $\epsilon>0$, and $\alpha\geq 0$ are real parameters. In the context of a Gaussian light beam (i.e. for a profile of $\mu$ similar to $\mu(x)=e^{\,-x^2}-\chi$, with some $\chi\in (0,1)$, and  $f(x)=-\frac{1}{2}\mu'(x)$), we could obtain a very complete description of global minimizers that we summarize below. 

We were mostly interested in the behaviour of minimizers of \eqref{funct 0} for the regime $0<\epsilon \ll 1$, and for fixed values of the real parameter $\alpha \geq 0$, and we showed that, depending on the strength of the forcing $\alpha f$, various topologically non trivial minimizers (kinks) were present. In this singular perturbation problem (so that $\ve\ll 1$), we recall the existence of the corner layer for $\alpha=0$, and of an entirely novel type of phase transition for $\alpha$ sufficiently small, which we named shadow kink. It is remarkable that despite the symmetry of the problem, the shadow kink does not preserve it. Finally, we also proved, the oddness of the global minimizer that we called standard kink, for $\alpha$ large, and the existence of a giant kink when $\mu<0$. We point out that in contrast with the two dimensional case, we could establish that the aforementioned kinks have exactly one transition (i.e. one zero).

Our scope is to complete the study initiated in \cite{kink}, and investigate the behaviour of global minimizers of \eqref{funct 0}, in the case of the donut-shaped light beam, i.e. for  
\begin{equation}\label{beam3}
\mu(x)=x^4e^{\,-x^2}-\chi, \qquad \mbox{with some}\ \chi\in (0,4 e^{-2}), \qquad f(x)=-\frac{1}{2}\mu'(x). 
\end{equation}
In this setting, the illuminated region $\{\mu(x)>0\}$ (which is a ring in the two dimensional case), reduces to the union of two segments.

Although this is a simpler model, it shows an excellent agreement with the experiments performed in \cite{opt}. Indeed, we will see that the vortex triad loops have their counterparts in the three zeros of the global minimizers of \eqref{funct 0}. Furthermore, as soon as the parameter $\alpha$ passes the critical value $\sqrt{2}$ from above, the minimizers reproduce the change in the vortex configuration observed experimentally. Namely, the two standard kinks located at the center of the illuminated region, move to its border and become shadow kinks.

\subsection{Main results}
 
All our results hold under more general hypothesis on $\mu$ and $f$ which we will state now. We consider a smooth, even and bounded function $\mu$ having $2$ bumps, 
and such that the set $\{\mu \geq 0\}$ is bounded. We present our results in this case instead of the general set up of a function $\mu$ with $N$ bumps, for the sake of simplicity, 
and because exactly the same regimes are observed independently of the number of bumps. However, we shall add in the sequel some comments about the general $N$ bumps case. More precisely, we assume that:
\begin{equation}
\label{hyp3}
\begin{cases}
 \text{$\mu \in C^1(\R) \cap L^\infty(\R)$ is even,  $\mu'<0$ in $(\zeta,\infty)$, for some $\zeta>0$, $\mu(\xi)=0$ for a unique $\xi>\zeta$,}\\
\text{$\mu'(x)>0$, for $0<x<\zeta$,}\\
\text{and  $f=-\frac{\mu'}{2}$, $f'(0)<0$.}
\end{cases}
\end{equation}
As a consequence of these assumptions, $\mu$ attains its maximum at the points $\pm\zeta$, while at the point $0$, it has a nondegenerate local minimum. On the other hand, we take $f=-\frac{\mu'}{2}$, since this choice of $f$ is motivated by the physical model 
of matter-light interaction in nematic liquid crystals. The assumption $f'(0)<0$ will only be used in the proof of Lemma \ref{p3bbis} to compute an estimate of the distance of the zero of the minimizer from $0$.

In addition, we will assume one of the following:
\begin{itemize}
 \item[(A)] $\mu$ remains positive in the interval $[0,\zeta]$ (i.e. within a period).
 \item[(B)] $\mu$ changes sign in the interval $[0,\zeta]$ (i.e. between the two periods): we denote by $\xi'$ its unique zero in $(0,\zeta)$.
\end{itemize}
The existence of the $2$ bumps of $\mu$ now implies that $f$ has $3$ zeros, 
and allows for the existence of global minimizers with at most $3$ transitions. These minimizers solve the Euler-Lagrange equation of $E$:
\begin{equation}\label{digital}
\ve^2 u_{xx}+\mu(x) u-u^3+\ve \alpha f(x)=0,\qquad x\in \R,
\end{equation} 
and form patterns combining the different kinds of elementary kinks studied in \cite{kink}. 
When $\alpha=0$, the global minimizer is still the corner layer, a solution of \eqref{digital} that does not vanish. 
However, as $\alpha$ increases and $\ve\ll 1$, the structure of the global minimizers becomes more complex. 

We will describe below the properties, the behavior and the patterns formed by these kinks, according to the values of the parameters $\epsilon$ and $\alpha$. In section \ref{sec:main-results}, we first establish some basic results for minimizers, and study their zeros. Next, in section \ref{sec:renormalized} we compute upper bounds for the renormalized energy functional introduced in \eqref{renorm2}. From these bounds we will deduce in section \ref{sec:structure}, the structure of the kinks for the different regimes. Finally, in section \ref{sec:per} we will briefly discuss the case where $\mu$ and $f$ are periodic, that is, the limiting case where the number of bumps $N\to \infty$. Since the corresponding results are similar with the case where $\mu$ has $2$ bumps, we will only point out the new elements and give the proofs that are not self-evident.

\section{Basic results for minimizers when the function $\mu$ has two bumps}
\label{sec:main-results}

We recall the basic existence results of minimizers (cf. \cite{kink}), requiring only some mild hypotheses: 
\begin{align}
\label{hyp1}
\text{$f\in L^{4/3}(\R)$ and $\mu\in L^{\infty}(\R)$, }
\end{align}
and
\begin{align}
\label{hyp2}
\text{$I_{\eta}:=\{x\in\R: \ \mu(x)+\eta>0\}$ is bounded for some $\eta>0$. }
\end{align}

\begin{theorem}\label{th1}
Assume \eqref{hyp1} and \eqref{hyp2}. 
Then there exists $v \in H^1(\R)$ such that $E(v)=\min_{H^1(\R)} E$. In addition, if $\mu$ and $f$ are continuous, then $v\in C^2(\R)$ is a classical solution of the O.D.E.
\begin{equation}\label{ode}
\epsilon^2 v_{xx}+\mu(x) v-v^3+\epsilon \alpha f(x)=0, \qquad \forall x\in \R.
\end{equation}
\end{theorem}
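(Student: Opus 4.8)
The plan is to use the direct method of the calculus of variations to produce a minimizer, and then a standard bootstrap to upgrade it to a classical solution. First I would fix a minimizing sequence $\{u_n\}\subset H^1(\R)$ with $E(u_n)\to\inf_{H^1}E$ and show it is bounded in $H^1(\R)$. I expect the \emph{main obstacle} to be precisely this lower bound, since the term $-\frac{1}{2\epsilon}\mu u^2$ in \eqref{funct 0} is sign-indefinite; the whole argument hinges on \eqref{hyp2}, which confines the positivity of $\mu$ to the bounded set $I_\eta$ where the quartic term can dominate, while guaranteeing coercivity elsewhere. Concretely, outside $I_\eta$ one has $\mu\le-\eta<0$, so $-\frac{1}{2\epsilon}\mu u^2\ge\frac{\eta}{2\epsilon}u^2\ge0$, which controls $\|u\|_{L^2(\R\setminus I_\eta)}^2$. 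On the bounded set $I_\eta$ I would absorb the indefinite part by Young's inequality, $\frac{1}{2\epsilon}\|\mu\|_\infty u^2\le\frac{1}{4\epsilon}u^4+C$, so the quartic term keeps the energy bounded below there. The forcing term is handled by H\"older and Young: since $f\in L^{4/3}(\R)$, one has $\alpha\big|\int f u\big|\le\alpha\|f\|_{4/3}\|u\|_{4}\le\delta\|u\|_4^4+C_\delta$, and choosing $\delta<\frac{1}{4\epsilon}$ absorbs it into the quartic term. Combining these gives a bound of the form $E(u)\ge c_1\|u_x\|_2^2+c_2\|u\|_4^4+\frac{\eta}{2\epsilon}\|u\|_{L^2(\R\setminus I_\eta)}^2-C$; controlling $\|u\|_{L^2(I_\eta)}^2\le|I_\eta|^{1/2}\|u\|_{L^4(I_\eta)}^2$ on the bounded set then yields a uniform $H^1$ bound on $\{u_n\}$ (and in particular shows $E$ is bounded below, so the infimum is finite).

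Next I would extract a subsequence with $u_n\rightharpoonup v$ weakly in $H^1(\R)$; using the compact embedding $H^1(I_\eta)\hookrightarrow\hookrightarrow L^2(I_\eta)$ on the bounded set together with a diagonal argument, I may also assume $u_n\to v$ strongly in $L^2_{loc}$ and almost everywhere. I would then verify that $E$ is sequentially weakly lower semicontinuous along this sequence. The Dirichlet term $\frac{\epsilon}{2}\int|u_x|^2$ is weakly lower semicontinuous by convexity; the quartic term is weakly lower semicontinuous by Fatou's lemma via the a.e. convergence. The forcing term is in fact weakly \emph{continuous}, because $\{u_n\}$ is bounded in $L^4(\R)$ and $f\in L^{4/3}(\R)=(L^4(\R))'$, so $\int f u_n\to\int f v$. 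For the indefinite potential term I would split the integral: on the bounded set $I_\eta$ the strong $L^2$ convergence gives $\int_{I_\eta}\mu u_n^2\to\int_{I_\eta}\mu v^2$, while on $\R\setminus I_\eta$ the integrand $-\mu u_n^2\ge0$ is again handled by Fatou. Passing to the limit yields $E(v)\le\liminf_n E(u_n)=\inf_{H^1}E$, so $v$ is a global minimizer.

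Finally, assuming $\mu$ and $f$ continuous, minimality implies that $v$ is a weak solution of the Euler--Lagrange equation \eqref{ode}, obtained by computing $\frac{d}{dt}E(v+t\varphi)\big|_{t=0}=0$ for all $\varphi\in C_c^\infty(\R)$ (the admissibility of such variations follows from the growth of the integrand and $v\in H^1\cap L^4$). For regularity I would use that in dimension one $H^1(\R)\hookrightarrow C^{0,1/2}(\R)$, so $v$ is continuous and bounded; hence the right-hand side $-\frac{1}{\epsilon^2}\big(\mu v-v^3+\epsilon\alpha f\big)$ is continuous, and the weak formulation identifies $v_{xx}$ with this continuous function. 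This gives $v\in C^2(\R)$, a classical solution of \eqref{ode}, and a routine bootstrap shows that greater smoothness of $\mu$ and $f$ would raise the regularity of $v$ accordingly.
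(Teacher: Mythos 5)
Your proposal is correct, and it is the standard direct-method argument (coercivity from \eqref{hyp2} outside the bounded set $I_\eta$, absorption of the indefinite and forcing terms into the quartic, weak lower semicontinuity, then first variation and one-dimensional elliptic regularity). The paper itself gives no proof of Theorem \ref{th1} but only recalls it from \cite{kink}, where essentially this same argument is carried out, so there is nothing to add.
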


\begin{theorem}\label{th2}
In addition to the assumptions of Theorem \ref{th1}, we suppose that $f$ is odd and $\mu$ is even. Then, there exists $u \in H^1_{\mathrm{ odd}}(\R)$ such that $E(u)=\min_{H^1_{\mathrm{ odd}}(\R)} E$ and $u$ solves the O.D.E. \eqref{ode}. 
\end{theorem}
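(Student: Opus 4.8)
The plan is to proceed in two stages: first obtain an odd minimizer by the direct method restricted to the closed subspace $H^1_{\mathrm{odd}}(\R)$, and then upgrade the resulting \emph{constrained} stationarity to the full Euler--Lagrange equation \eqref{ode} by a parity argument.

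For existence I would repeat the argument behind Theorem \ref{th1}, the only change being the ambient space. Take a minimizing sequence $(u_n)\subset H^1_{\mathrm{odd}}(\R)$ for $E$. The coercivity and boundedness-below estimates used for Theorem \ref{th1} rely only on \eqref{hyp1}--\eqref{hyp2} and are insensitive to the symmetry constraint, so $(u_n)$ is bounded in $H^1(\R)$ and admits a weakly convergent subsequence $u_n\rightharpoonup u$. Since $H^1_{\mathrm{odd}}(\R)$ is a closed linear subspace of $H^1(\R)$, it is weakly closed, whence $u$ is again odd. Weak lower semicontinuity of $E$ (as established for Theorem \ref{th1}) then gives $E(u)\le\liminf E(u_n)=\min_{H^1_{\mathrm{odd}}(\R)}E$, so $u$ realizes the odd minimum.

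The substance of the proof is the second stage. Because $u$ minimizes $E$ over the odd functions, the admissible perturbations $u+t\phi$ with $\phi\in H^1_{\mathrm{odd}}(\R)$ yield, upon differentiating at $t=0$,
\begin{equation*}
DE(u)[\phi]=\int_{\R}\Big(\epsilon\, u_x\phi_x-\tfrac{1}{\epsilon}\mu\, u\phi+\tfrac{1}{\epsilon}u^3\phi-\alpha f\phi\Big)\,\dd x=0\qquad\text{for all odd }\phi\in H^1(\R).
\end{equation*}
The claim is that this in fact holds for every $\phi\in H^1(\R)$. Decompose an arbitrary test function as $\phi=\phi_{\mathrm e}+\phi_{\mathrm o}$ with $\phi_{\mathrm e}(x)=\tfrac12(\phi(x)+\phi(-x))$ even and $\phi_{\mathrm o}(x)=\tfrac12(\phi(x)-\phi(-x))$ odd, both lying in $H^1(\R)$. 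The odd part is covered by the displayed identity, so it suffices to show $DE(u)[\phi_{\mathrm e}]=0$. Here I would use that $u$ odd forces $u_x$ to be even and $u^3$ to be odd, while $\mu$ is even, $f$ is odd, and $\phi_{\mathrm e}$ is even with $(\phi_{\mathrm e})_x$ odd. Each of the four integrands is then a product of odd total parity: $u_x(\phi_{\mathrm e})_x$ is even$\times$odd, $\mu u\phi_{\mathrm e}$ is even$\times$odd$\times$even, $u^3\phi_{\mathrm e}$ is odd$\times$even, and $f\phi_{\mathrm e}$ is odd$\times$even. Since each product lies in $L^1(\R)$ (using $\mu\in L^\infty$, $f\in L^{4/3}$, and the embedding $H^1(\R)\hookrightarrow L^\infty\cap L^4$), and the integral of an odd $L^1$ function over $\R$ vanishes, we get $DE(u)[\phi_{\mathrm e}]=0$, hence $DE(u)[\phi]=0$ for all $\phi\in H^1(\R)$. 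Equivalently, this reflects the invariance of $E$ under $u(\cdot)\mapsto-u(-\cdot)$, a symmetric-criticality phenomenon.

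Thus $u$ is a weak solution of \eqref{ode}, and the regularity bootstrap already used for Theorem \ref{th1} (when $\mu,f$ are continuous) promotes it to a classical $C^2$ solution. The one point requiring genuine care---the main obstacle---is precisely this parity computation ensuring that the first variation against even test functions vanishes automatically; everything else is a transcription of the proof of Theorem \ref{th1}.
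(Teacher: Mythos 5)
Your argument is correct and is essentially the standard one: the paper itself does not reprove Theorem \ref{th2} but defers to \cite{kink}, where the proof is exactly the direct method on the weakly closed subspace $H^1_{\mathrm{odd}}(\R)$ followed by the symmetric-criticality/parity computation showing that the first variation against even test functions vanishes automatically. Both your coercivity/lower-semicontinuity transcription and the parity bookkeeping (each of the four integrands is odd, hence integrates to zero) check out, so nothing further is needed.
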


We will always denote by $v$ or $v_{\epsilon,\alpha}$ the global minimizer, and by $u$ or $u_{\epsilon,\alpha}$, the odd one.

We recall next some general properties of solutions (cf. Proposition \ref{p0}), and of the global and the odd minimizers. The proofs of Propositions \ref{p0}, \ref{p1}, \ref{p2}, \ref{p4} can be found in \cite{kink}.
Propositions \ref{p1}, \ref{p2} and \ref{p1-part2} deal with the case $\alpha=0$. Under the symmetries in \eqref{hyp3}, the energy \eqref{funct 0} and equation 
\eqref{digital} are invariant under the odd symmetry $\hat v(x)=-v(-x)$, and 
Proposition~\ref{p1} shows that for $\alpha=0$ the minimizer is unique up to 
this symmetry. When $\alpha>0$  the global minimizer $v$ is very likely  not unique up to this symmetry, 
but we prove below (cf. Proposition \ref{p4}) that two distinct global minimizers do not intersect, and that the standard kink $u$ either lies between $v$ and $\hat v$ or coincides with them.

In the sequel we  denote by $E(u,(a,b))$ the integral of the energy functional \eqref{funct 0} over an interval $(a,b)$.

\begin{proposition}\label{p0}(Properties of solutions)
\begin{itemize}
\item[(i)] If $u$ is a solution of \eqref{ode} belonging to $H^1(\R)$, then
$$E(u)=\int_{\R}-\frac{1}{4\epsilon}|u|^4-\frac{\alpha}{2} f(x)u.$$
\item[(ii)] If $u$ is a bounded solution of \eqref{ode}, then $\lim_{+\infty}u' =\lim_{-\infty}u'=0$.
\item[(iii)] For $\epsilon$ and $\alpha$ belonging to a bounded interval, let $u_{\epsilon,\alpha}$ be a solution of \eqref{ode} converging to $0$ at $\pm\infty$. Then, the solutions $u_{\epsilon,\alpha}$ are uniformly bounded by a constant $M$.
\end{itemize}
\end{proposition}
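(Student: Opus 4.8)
The three parts are independent, and I would treat them in order.

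For (i), the plan is to test the equation \eqref{ode} against the solution itself. Multiplying \eqref{ode} by $u$ and integrating over $\R$, the second-order term becomes $\epsilon^2\int_\R uu_{xx}=-\epsilon^2\int_\R u_x^2$ after one integration by parts; the boundary contribution $\epsilon^2[uu_x]_{-\infty}^{+\infty}$ vanishes since $u\in H^1(\R)$ gives $u,u_x\in L^2$, so that $u\to 0$ at $\pm\infty$ and $u_x$ is bounded (as in part (ii)), forcing $uu_x\to 0$. This produces the identity $\epsilon^2\int_\R u_x^2=\int_\R\mu u^2-\int_\R u^4+\epsilon\alpha\int_\R fu$. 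I would then substitute $\tfrac{\epsilon}{2}\int_\R u_x^2=\tfrac{1}{2\epsilon}\big(\int_\R\mu u^2-\int_\R u^4+\epsilon\alpha\int_\R fu\big)$ into the definition of $E$ in \eqref{funct 0}; the two $\mu u^2$ terms cancel, and collecting the remaining $u^4$ and $fu$ contributions yields exactly $E(u)=-\tfrac{1}{4\epsilon}\int_\R u^4-\tfrac{\alpha}{2}\int_\R fu$. This step is routine once the boundary term is justified.

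For (ii), I would first upgrade boundedness of $u$ to bounds on its derivatives. Since $u$ is bounded and $\mu,f$ are bounded, equation \eqref{ode} shows $u_{xx}=\epsilon^{-2}(u^3-\mu u-\epsilon\alpha f)$ is bounded, so $u_x$ is Lipschitz, hence uniformly continuous, and $u_x$ is itself bounded. To conclude $u_x\to 0$ at $+\infty$ (the case $-\infty$ being symmetric), I would use a translation argument: for any sequence $x_n\to+\infty$ the functions $u(\cdot+x_n)$ are bounded in $C^2_{\mathrm{loc}}$, so a subsequence converges in $C^1_{\mathrm{loc}}$ to a solution $w$ of the limiting autonomous equation $\epsilon^2 w''=w^3-\mu_\infty w$, where $\mu_\infty=\lim_{+\infty}\mu\le 0$ and the forcing drops out because $f=-\mu'/2\to 0$ by \eqref{hyp3}. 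The associated potential $W(w)=\tfrac14 w^4-\tfrac{\mu_\infty}{2}w^2$ is strictly convex with a single critical point at $0$, so the conserved energy $\tfrac{\epsilon^2}{2}(w')^2-W(w)$ forces the only bounded orbit to be $w\equiv 0$, and in particular $w'\equiv 0$. Since the sequence was arbitrary, $u_x(x_n)\to 0$ for every such sequence, giving $u_x\to 0$.

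For (iii), the plan is a uniform $L^\infty$ bound by the maximum principle, exploiting the sign of the cubic. Because $u_{\epsilon,\alpha}\to 0$ at $\pm\infty$, if $u\not\equiv 0$ then $\max u$ and $\min u$ are attained at finite points. At a positive maximum $x_0$ one has $u_{xx}(x_0)\le 0$, so \eqref{ode} gives $u(x_0)^3\le\mu(x_0)u(x_0)+\epsilon\alpha f(x_0)\le\|\mu\|_\infty\,u(x_0)+\epsilon\alpha\|f\|_\infty$; as $\epsilon,\alpha$ range in a bounded interval, the right-hand side is at most $C_1\,u(x_0)+C_2$, which forces $u(x_0)\le M_+$ for a constant depending only on $\|\mu\|_\infty$, $\|f\|_\infty$, and the bounds on $\epsilon,\alpha$. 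The symmetric computation at a negative minimum yields the lower bound, and together they give $\|u_{\epsilon,\alpha}\|_\infty\le M$ uniformly. The main obstacle lies in (ii): uniform continuity of $u_x$ alone does not prevent persistent oscillation at infinity, so the essential input is the translation/compactness step combined with the convexity of the limiting potential, which rules out every nonconstant bounded profile.
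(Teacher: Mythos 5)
The paper does not actually prove Proposition \ref{p0}; it refers the reader to \cite{kink}, so there is no in-paper argument to match yours against. Taken on its own terms, your proof is essentially correct and uses the standard devices: (i) is the Pohozaev-type identity obtained by testing \eqref{ode} against $u$, and (iii) is the evaluation of \eqref{ode} at interior extrema (which exist because $u\to 0$ at $\pm\infty$), giving $u(x_0)^3\leq \|\mu\|_\infty u(x_0)+\epsilon\alpha\|f\|_\infty$ and hence a bound depending only on the ranges of $\epsilon,\alpha$. Both are fine. One small cleanup in (i): rather than asserting boundedness of $u_x$, it is cleaner to note that $uu_x\in L^1(\R)$ (product of two $L^2$ functions), so the boundary term $[uu_x]_{-R_n}^{R_n}$ vanishes along a suitable sequence $R_n\to\infty$, which is all the integration by parts requires.

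The only part that deserves scrutiny is (ii). Your compactness/translation argument is a legitimate route, and it even yields the stronger conclusion $u\to 0$ at $\pm\infty$. But it hinges on the claim that the forcing disappears in the limit equation, i.e.\ that $f(x)\to 0$ as $|x|\to\infty$. This is not literally a consequence of the stated hypotheses: \eqref{hyp1} only gives $f\in L^{4/3}(\R)$, and \eqref{hyp3} gives $\mu\in C^1\cap L^\infty$ monotone at infinity, which forces $\mu'\in L^1$ near $\pm\infty$ but not $\mu'\to 0$ pointwise (narrow spikes are not excluded). For the physical profiles \eqref{beam3} this is harmless, and one can also repair the general case by passing to the limit in the ODE distributionally, using $f(x_n+\cdot)\to 0$ in $L^{4/3}_{\mathrm{loc}}$; but as written the step is a gap relative to the stated hypotheses. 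An alternative, more elementary route is to first show $\lim_{\pm\infty}u=0$ directly from the coercivity of $t\mapsto t^3-\mu(x)t$ in the region $\{\mu<0\}$ (a convexity argument as in Proposition \ref{p3}(ii)), and then deduce $u'\to 0$ from the interpolation inequality $\|u'\|^2_{L^\infty(I)}\leq C\,\|u\|_{L^\infty(I)}\big(\|u\|_{L^\infty(I)}+\|u''\|_{L^\infty(I)}\big)$ on unit intervals, which avoids the limit-equation analysis altogether. Your phase-plane classification of bounded orbits of $\epsilon^2w''=w^3-\mu_\infty w$ with $\mu_\infty<0$ is correct.
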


\begin{proposition}\label{p1}
When $\alpha=0$, the minimizer $v$ constructed in Theorem \ref{th1}:
\begin{itemize}
\item[(i)] has constant sign or is identically zero,
\item[(ii)] is even, 
\item[(iii)] is unique up to changing $v$ by $-v$.
\end{itemize}
\end{proposition}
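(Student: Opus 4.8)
The plan is to prove the three assertions in the order (i), (iii), (ii), since evenness will follow from uniqueness together with the reflection symmetry of the $\alpha=0$ energy. Throughout I write $E(u)=\int_{\R}\frac{\epsilon}{2}u_x^2-\frac{1}{2\epsilon}\mu u^2+\frac{1}{4\epsilon}u^4$, which depends on $u$ only through $u^2$, $u^4$ and $u_x^2$.

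For (i), I would first observe that replacing $v$ by $|v|$ leaves the energy unchanged: indeed $|v|\in H^1(\R)$ with $|v|_x=\sgn(v)\,v_x$ a.e., so $|v|_x^2=v_x^2$ a.e., while the potential terms depend only on $|v|$. Hence $|v|$ is also a global minimizer, and by Theorem \ref{th1} it is a $C^2$ solution of \eqref{ode}. Suppose $v\not\equiv 0$ but $v(x_0)=0$ for some $x_0$. Then $|v|\ge 0$ attains a global minimum at $x_0$, so $|v|(x_0)=|v|'(x_0)=0$; since the right-hand side of \eqref{ode} (with $\alpha=0$) is locally Lipschitz in the solution, uniqueness for the initial value problem forces $|v|\equiv 0$, a contradiction. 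Therefore $v$ never vanishes and has constant sign.

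The heart of the argument is (iii), which I would establish through the hidden convexity of $E$ in the variable $w=v^2$. For a positive $u$, the substitution $u=\sqrt{w}$ gives $u_x^2=w_x^2/(4w)$, so that $E(\sqrt{w})=\int_{\R}\frac{\epsilon}{8}\frac{w_x^2}{w}-\frac{\mu}{2\epsilon}w+\frac{1}{4\epsilon}w^2=:\tilde E(w)$. The integrand $(w,p)\mapsto p^2/w$ is jointly convex on $\{w>0\}$ (its Hessian is positive semidefinite), the $\mu$-term is affine, and the quartic term $\frac{1}{4\epsilon}\int w^2$ is strictly convex; hence $\tilde E$ is strictly convex in $w$. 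Now let $v_1,v_2$ be two global minimizers; by (i) we may assume both are positive (replacing $v_i$ by $|v_i|$). Setting $w_i=v_i^2$ and $w_t=(1-t)w_1+tw_2$, the function $\sqrt{w_t}$ lies in $H^1(\R)$ (the convexity of $p^2/w$ bounds $(\sqrt{w_t})_x^2$ by $(1-t)v_{1,x}^2+tv_{2,x}^2$, and $\sqrt{w_t}\le\sqrt{1-t}\,|v_1|+\sqrt{t}\,|v_2|\in L^2$) and satisfies $E(\sqrt{w_t})=\tilde E(w_t)\le (1-t)E(v_1)+tE(v_2)=\min_{H^1}E$. Since the reverse inequality always holds, equality must occur, and strict convexity of the quartic term forces $w_1=w_2$, i.e. $v_1=v_2$. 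Undoing the normalization of signs shows that any two minimizers differ by a global sign, which is (iii).

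Finally, for (ii) I would use that when $\alpha=0$ the energy is invariant under the reflection $v(x)\mapsto v(-x)$, because $\mu$ is even. Thus $x\mapsto v(-x)$ is again a global minimizer, and if $v\not\equiv 0$ it has the same sign as $v$ after the above normalization, so (iii) yields $v(x)=v(-x)$; the case $v\equiv 0$ is trivially even. I expect the main obstacle to be the uniqueness step (iii): one must justify that $\sqrt{w_t}$ is an admissible $H^1$ competitor (in particular near infinity, where all functions decay to $0$) and isolate the strict convexity, which comes only from the quartic term since the kinetic and quadratic parts are merely convex. Once the convexity of $\tilde E$ is in place, parts (ii) and (iii) follow quickly.
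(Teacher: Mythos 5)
Your argument is correct and follows the standard route for this kind of statement; note that the paper does not reprove Proposition \ref{p1} but defers to \cite{kink}, where the proof is based on exactly the ingredients you use: the $|v|$ replacement plus uniqueness for the initial value problem of \eqref{ode} (with $\alpha=0$) for (i), the hidden convexity of the energy in the variable $w=v^2$ (the Brezis--Oswald trick, with strict convexity carried by the quartic term) for (iii), and reflection invariance combined with uniqueness for (ii). Your verification that $\sqrt{w_t}$ is an admissible competitor is the right point to dwell on, and your treatment of it is sound since (i) guarantees $w_1,w_2>0$ everywhere so that $w_t>0$ and the identity $(\sqrt{w_t})_x^2=(w_t)_x^2/(4w_t)$ is legitimate.

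There is one small loose end in (iii): you normalize \emph{both} minimizers to be positive by replacing $v_i$ with $|v_i|$, but this silently assumes that the trivial minimizer $v\equiv 0$ and a nontrivial one cannot coexist (if $v_1\equiv 0$ you cannot make it positive). This is easily closed: by Proposition \ref{p0}(i), any nontrivial $H^1$ solution with $\alpha=0$ satisfies $E(v)=-\frac{1}{4\epsilon}\int_\R v^4<0=E(0)$, so either every global minimizer vanishes identically or none does, and in the latter case your normalization applies. With that one-line remark inserted, the proof is complete.
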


\begin{proposition}\label{p2} 
When $\alpha=0$, the odd minimizer $u$ constructed in Theorem \ref{th2} has constant sign in the interval $(0,\infty)$ or is identically zero. Furthermore, it is unique up to changing $u$ by $-u$. 
\end{proposition}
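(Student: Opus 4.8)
The plan is to reduce the problem to the half-line and combine an elementary symmetrization with a hidden-convexity argument. Since $\alpha=0$, $\mu$ is even and $u$ is odd, the energy density in \eqref{funct 0} is even, so $E(u)=2\,\mathcal F(w)$, where $w:=u|_{(0,\infty)}$ and $\mathcal F(w):=\int_0^\infty \frac{\epsilon}{2}w_x^2-\frac{1}{2\epsilon}\mu w^2+\frac{1}{4\epsilon}w^4$. The odd-extension map identifies $H^1_{\mathrm{odd}}(\R)$ with $X:=\{w\in H^1((0,\infty)):w(0)=0\}$, so $w$ minimizes $\mathcal F$ over $X$; set $m:=\min_X\mathcal F$. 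Throughout I use that a minimizer, viewed as an odd minimizer of $E$, is a classical $C^2$ solution of \eqref{ode} by Theorem \ref{th2}.

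\emph{Constant sign.} First I observe that $\mathcal F(|w|)=\mathcal F(w)$, since each term depends on $w$ only through $w^2$, $w^4$, and $w_x^2=(|w|_x)^2$ a.e.; hence $|w|$ is also a minimizer of $\mathcal F$ on $X$, and its odd extension is a $C^2$ solution of \eqref{ode}. Now suppose $w(x_0)=0$ for some $x_0\in(0,\infty)$. If moreover $w_x(x_0)=0$, then, since the right-hand side of \eqref{ode} is locally Lipschitz in $(w,w_x)$, uniqueness for the initial value problem forces $w\equiv0$ on $(0,\infty)$, i.e. $u\equiv0$. If instead $w_x(x_0)\neq0$, then $w$ changes sign at $x_0$ and $|w|$ has a corner there, contradicting $|w|\in C^2$. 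Thus any interior zero of $w$ makes $u$ vanish identically; otherwise $w$ has no zero on the connected set $(0,\infty)$ and, being continuous, has constant sign.

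\emph{Uniqueness.} Let $w_1,w_2$ be two nonzero minimizers; by the previous step each is $C^2$ and, after replacing $u$ by $-u$ if needed, we may assume $w_1,w_2>0$ on $(0,\infty)$. The key point is the convexity of $\mathcal F$ in the variable $\rho:=w^2$. Writing $\rho_i=w_i^2$, $\rho_t:=(1-t)\rho_1+t\rho_2$ and $w_t:=\sqrt{\rho_t}\in X$, the identity $w_x^2=\frac{\rho_x^2}{4\rho}$ together with the joint convexity of $(\rho,p)\mapsto p^2/\rho$ on $\{\rho>0\}$ makes the Dirichlet term convex in $\rho$; the term $-\frac{1}{2\epsilon}\mu\rho$ is linear and $\frac{1}{4\epsilon}\rho^2$ is strictly convex. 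Summing, $\mathcal F(w_t)\le(1-t)\mathcal F(w_1)+t\mathcal F(w_2)=m$, with strict inequality (from the quartic term) unless $\rho_1\equiv\rho_2$. Since $m$ is the minimum, we must have $\rho_1=\rho_2$, hence $w_1=w_2$. Finally, the zero case is consistent: if $w\equiv0$ is a minimizer then $m=0$, and a coexisting nonzero minimizer $\tilde w$ would satisfy $\int_0^\infty(\frac{\epsilon}{2}\tilde w_x^2-\frac{1}{2\epsilon}\mu\tilde w^2)=-\frac{1}{4\epsilon}\int_0^\infty\tilde w^4<0$, so that $\mathcal F(t\tilde w)<0=m$ for small $t>0$, a contradiction. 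In all cases the minimizer is unique up to $u\mapsto-u$.

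The main obstacle is the rigorous justification of the hidden-convexity step: one must verify that $w_t=\sqrt{\rho_t}$ genuinely belongs to $X$ with finite Dirichlet energy, and that the pointwise convexity inequalities integrate correctly even though $w_1,w_2$ vanish at $0$ (and decay at $+\infty$). This is precisely the Benguria--Brezis--Lieb convexity, and the strict positivity of $w_1,w_2$ on the open half-line established in the first step is exactly what makes $(\rho,p)\mapsto p^2/\rho$ applicable pointwise. By contrast, the constant-sign statement is comparatively routine once the invariance $\mathcal F(|w|)=\mathcal F(w)$, the $C^2$ regularity of minimizers, and uniqueness for the ODE are in hand.
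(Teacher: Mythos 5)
Your proof is correct and follows essentially the same route as the paper's (which defers this argument to \cite{kink}): constant sign via the invariance $E(|u|)=E(u)$ combined with the $C^2$ regularity of minimizers and uniqueness for the initial value problem of \eqref{ode}, and uniqueness up to sign via the hidden (Benguria--Brezis--Lieb) convexity of the energy in $\rho=u^2$, here carried out on the half-line with the Dirichlet condition $w(0)=0$ encoding oddness.
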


\begin{proposition}
\label{p1-part2}
Assume $\alpha=0$ and that $v$, the global minimizer of Theorem \ref{th1}, is positive. Then $v$ satisfies:
\begin{itemize}
\item[(i)] $v'(x)<0$, $\forall x\geq \xi$, and $v(0)\leq\sqrt{ \mu(0)}$,
\item[(ii)] the function $$\theta_\epsilon: \, x \to \theta_\epsilon(x)=\frac{\epsilon^2}{2}|v'(x)|^2+\mu(x)\frac{v^2(x)}{2}-\frac{v^4(x)}{4}$$ is decreasing in the interval $[\zeta,\infty)$ and converges to $0$ at $+\infty$,
\item[(iii)] $v(x)\leq v(x_0)e^{-\frac{\sqrt{-\mu(x_0)}}{\epsilon}(x-x_0)}$, $\forall x \geq x_0 >\xi$.
\end{itemize}
\end{proposition}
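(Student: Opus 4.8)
The plan is to derive everything from the equation $\epsilon^2 v''=v^3-\mu v$ (that is, \eqref{ode} with $\alpha=0$) together with the auxiliary quantity $\theta_\epsilon$. First I would differentiate $\theta_\epsilon$ and substitute the equation to cancel the terms containing $v'$: a one-line computation gives $\theta_\epsilon'(x)=\tfrac12\mu'(x)v^2(x)$. Since $\mu'<0$ on $(\zeta,\infty)$ and $v>0$, this yields $\theta_\epsilon'<0$ there, so $\theta_\epsilon$ is (strictly) decreasing on $[\zeta,\infty)$, which is the first assertion of (ii). For the limit I would use that $v\in H^1(\R)$ forces $v(x)\to 0$ as $x\to+\infty$, while $v'(x)\to 0$ by Proposition \ref{p0}(ii); since $\mu$ is bounded, every term of $\theta_\epsilon$ vanishes at $+\infty$, so $\theta_\epsilon\to 0$. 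This proves (ii), and in particular $\theta_\epsilon\ge 0$ on $[\zeta,\infty)$.

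For the monotonicity in (i): on $[\xi,\infty)$ we have $\mu\le 0$ (because $\mu(\xi)=0$ and $\mu$ decreases afterwards), so the equation gives $\epsilon^2 v''=v(v^2-\mu)\ge v^3>0$, whence $v''>0$ and $v'$ is strictly increasing on $[\xi,\infty)$. As $v'(x)\to 0$ at $+\infty$, strict monotonicity forces $v'(x)=-\int_x^{\infty}v''(t)\,dt<0$ for every $x\ge\xi$, which is the claim. For the bound $v(0)\le\sqrt{\mu(0)}$ I would invoke evenness (Proposition \ref{p1}(ii)), which gives $v'(0)=0$, and evaluate the equation at the critical point $x=0$ to get $\epsilon^2 v''(0)=v(0)\big(v^2(0)-\mu(0)\big)$; since $v(0)>0$, the desired inequality $v^2(0)\le\mu(0)$ is equivalent to the sign condition $v''(0)\le 0$, i.e. to $x=0$ not being a strict local minimum of $v$. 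Pinning down this sign is the delicate point, and I expect it to be the main obstacle: the natural route is to establish that $0$ is a maximum point of $v$, arguing from the behaviour of $\mu$ on $[0,\zeta]$ combined with a comparison/maximum-principle argument, and it is here that the structure of $\mu$ really enters.

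Finally, I would obtain the decay estimate (iii) by a scalar comparison. Fix $x_0>\xi$, put $\lambda=\sqrt{-\mu(x_0)}>0$, and set $w(x)=v(x_0)\,e^{-\frac{\lambda}{\epsilon}(x-x_0)}$, which solves $\epsilon^2 w''=\lambda^2 w$. For $x\ge x_0$ the monotonicity of $\mu$ gives $\mu(x)\le\mu(x_0)=-\lambda^2$, hence $\epsilon^2 v''=v(v^2-\mu)\ge -\mu(x_0)\,v=\lambda^2 v$. Therefore $d:=v-w$ satisfies $\epsilon^2 d''\ge\lambda^2 d$ on $(x_0,\infty)$ with $d(x_0)=0$ and $d(x)\to 0$ at $+\infty$. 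If $d$ had a positive interior maximum at some $x_1$, then $d''(x_1)\le 0$ while $\lambda^2 d(x_1)>0$, a contradiction; hence $d\le 0$, i.e. $v\le w$ on $[x_0,\infty)$, which is exactly (iii).

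To summarize the difficulty: parts (ii), the monotonicity half of (i), and (iii) are robust consequences of the Hamiltonian identity $\theta_\epsilon'=\tfrac12\mu'v^2$, the convexity of $v$ where $\mu\le 0$, and a comparison principle, respectively. The only step I regard as genuinely subtle is the inequality $v(0)\le\sqrt{\mu(0)}$, which reduces to controlling the sign of $v''(0)$ for the even global minimizer and will require a separate structural argument rather than the equation alone.
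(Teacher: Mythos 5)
Your treatment of (ii), of the monotonicity statement in (i), and of (iii) is correct. For (ii) and for $v'<0$ on $[\xi,\infty)$ you follow essentially the paper's argument: the identity $\theta_\epsilon'=\tfrac12\mu'v^2$, the convexity of $v$ where $\mu\le 0$, and the limits $v,v'\to 0$ at $+\infty$. For (iii) you take a genuinely different route: the paper deduces from $\theta_\epsilon\ge 0$ on $[\zeta,\infty)$ (a consequence of (ii)) the first--order inequality $-v'/v\ge\sqrt{-\mu(x_0)}/\epsilon$ and integrates it, whereas you run a second--order comparison of $v$ with $w(x)=v(x_0)e^{-\lambda(x-x_0)/\epsilon}$ via a maximum principle for $d=v-w$. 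Both are valid; your version does not need (ii) as input, while the paper's avoids having to locate an interior maximum of $d$.

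The genuine gap is exactly where you located it: the inequality $v(0)\le\sqrt{\mu(0)}$ is left unproved. For the record, the paper's own argument at this point is one line: if $v(0)>\sqrt{\mu(0)}$, then $v'(0)=0$ by evenness and $\epsilon^2v''(0)=v(0)\bigl(v^2(0)-\mu(0)\bigr)>0$ by \eqref{ode}, and it is asserted that $v$ would then be convex on all of $\R$, contradicting $v>0$ and $v\to 0$ at $\pm\infty$. You should be aware that this propagation of convexity is immediate only where $\mu$ is non--increasing: once $v$ starts to increase from $x=0$, so does $\mu$ on $(0,\zeta)$ (under the standing hypotheses, $0$ is a nondegenerate local \emph{minimum} of $\mu$), so the set $\{v^2>\mu\}$ need not be forward invariant and $v$ may become concave before reaching $\zeta$. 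Indeed, in case (A) the formal outer expansion $v=\sqrt{\mu}+\epsilon^2(\sqrt{\mu})''/(2\mu)+O(\epsilon^4)$, valid near $x=0$ where no transition layer occurs, gives $v(0)-\sqrt{\mu(0)}\approx\epsilon^2\,\mu''(0)/\bigl(4\mu(0)^{3/2}\bigr)>0$. So your instinct that the sign of $v''(0)$ cannot be extracted from the equation alone is sound, and the ``separate structural argument'' you were hoping for is not actually supplied by the paper either: the one--line convexity argument is borrowed from the one--bump setting of \cite{kink}, where $0$ is the \emph{maximum} of $\mu$ and the argument does close, but it does not transfer to the present two--bump geometry.
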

\begin{proof}

(i) By \eqref{ode}, it is clear that $v$ is convex on $[\xi,\infty)$. Suppose by contradiction that $v'(x_0)\geq 0$, for some $x_0 \geq\xi$. Then, $v(x)\geq v(x_0)$ for $x \geq x_0$ which is impossible since $\lim_{+\infty}v=0$.
Similarly, if $v(0)>\sqrt{ \mu(0)}$, we have $v'(0)=0$ since $v$ is even, and $v''(0)>0$ by \eqref{ode}. One can easily see that in this case $v$ would be convex on $\R$, which is again a contradiction.

(ii) follows by computing $\theta'_\epsilon(x)=\mu'(x) \frac{v^2(x)}{2}\leq 0$, $\forall x \in [\zeta,\infty)$, and usng the fact that $\lim_{+\infty}v'=0$ (cf. Proposition \ref{p0} (ii)) and $\lim_{+\infty}v=0$, since $v \in H^1$. 

(iii) For $x \geq x_0>\xi$, we have
$$\frac{\epsilon^2}{2}|v'(x)|^2\geq \frac{v^4(x)}{4}-\mu(x)\frac{v^2(x)}{2}\geq -\mu(x_0)\frac{v^2(x)}{2}\Longrightarrow -\frac{v'}{v}\geq\frac{\sqrt{-\mu(x_0)}}{\epsilon}.$$ By integrating this inequality over the interval $[x_0,x]$, we obtain the desired result.

\end{proof}

\begin{proposition}\label{p4}
If $v_1$ and $v_2$ are two distinct global minimizers, then $v_1$ and $v_2$ do not intersect. Similarly, if $u_1$ and $u_2$ are two distinct odd minimizers, then $u_1$ and $u_2$ only intersect at the origin. Futhermore, a global minimizer $v$ either does not intersect an odd minimizer $u$ or coincides with it.
\end{proposition}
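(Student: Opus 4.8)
The plan is to exploit the lattice (truncation) structure of the admissible classes together with the fact that, for this functional, replacing two competitors by their pointwise maximum and minimum redistributes but does not create energy. Concretely, for any $w_1,w_2\in H^1(\R)$ one has the identity
\[
E(\max(w_1,w_2))+E(\min(w_1,w_2))=E(w_1)+E(w_2),
\]
because the potential terms ($\mu w^2$, $w^4$, $fw$) are evaluated pointwise and, almost everywhere, the pair $\{\partial_x\max,\partial_x\min\}$ is a rearrangement of $\{\partial_x w_1,\partial_x w_2\}$ (on the contact set $\{w_1=w_2\}$ the weak derivatives agree a.e.). The same reasoning yields the three–function version, with $\max$, median, and $\min$ on the left and $E(w_1)+E(w_2)+E(w_3)$ on the right.

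For the first assertion, if $v_1,v_2$ are global minimizers the identity forces $E(\max(v_1,v_2))=E(\min(v_1,v_2))=\min_{H^1}E$, so both are again global minimizers, hence $C^2$ solutions of \eqref{ode} by Theorem~\ref{th1}. Writing $w=\min(v_1,v_2)$, both $v_1$ and $v_2$ touch $w$ from above, so if they meet at some $x_0$ then $v_i(x_0)=w(x_0)$ and $v_i'(x_0)=w'(x_0)$ for $i=1,2$; thus $v_1$ and $v_2$ carry the same Cauchy data at $x_0$. Since the right-hand side of \eqref{ode} is locally Lipschitz in $v$, uniqueness for the initial value problem gives $v_1\equiv v_2$, contradicting distinctness. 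For the second assertion one reduces to the half line: for odd $w$ the integrand is even, so $E(w)=2E(w,(0,\infty))$, and an odd minimizer restricts to a minimizer of $E(\cdot,(0,\infty))$ among $H^1$–functions vanishing at $0$. As $\max(u_1,u_2)$ and $\min(u_1,u_2)$ still vanish at $0$, the same truncation identity on $(0,\infty)$ shows their odd extensions are odd minimizers, hence $C^2$ solutions; the touching/Cauchy-data argument at any $x_0>0$ then gives $u_1\equiv u_2$, so two distinct odd minimizers can meet only at the origin.

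The third assertion is the main obstacle, because $v$ and $u$ minimize over different classes, so the two-function identity no longer pins energies down. The device is to bring in $\hat v(x)=-v(-x)$, which is again a global minimizer since $\mu$ is even and $f$ odd, and to apply the three–function identity to $v,\hat v,u$:
\[
E(\max(v,\hat v,u))+E(\mathrm{med}(v,\hat v,u))+E(\min(v,\hat v,u))=2\min_{H^1}E+E(u).
\]
The crucial observation is that $M:=\mathrm{med}(v,\hat v,u)$ is odd, since the median commutes with the order-reversing reflection $w\mapsto\hat w$, which interchanges $v,\hat v$ and fixes $u$. Hence $E(M)\ge E(u)$ while $E(\max),E(\min)\ge\min_{H^1}E$, and because the sum of these three lower bounds already equals the right-hand side, all three are equalities: $S:=\max(v,\hat v,u)$ and $I:=\min(v,\hat v,u)$ are global minimizers and $M$ is an odd minimizer, all $C^2$. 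Now suppose $v(x_0)=u(x_0)$. Then two of the three values coincide at $x_0$, so $S(x_0)=\max(v(x_0),\hat v(x_0))$ equals $v(x_0)$ or $\hat v(x_0)$; by the first assertion the global minimizer $S$, meeting $v$ or $\hat v$, must equal that one, and likewise $I$ equals the other. This produces the sandwich $\min(v,\hat v)\le u\le\max(v,\hat v)$ with one-sided contact of $v$ and $u$ at $x_0$, so $v-u$ (or $u-v$) is nonnegative and vanishes at $x_0$; therefore $v'(x_0)=u'(x_0)$, and uniqueness for \eqref{ode} gives $v\equiv u$. In the degenerate situation where the contact forces $v=\hat v$ (i.e. $v$ odd), $v$ is itself an odd minimizer and one concludes by applying the second assertion to the two odd minimizers $v$ and $u$.

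I expect the genuine difficulty to be precisely this cross-class comparison: the naive $\max/\min$ of $v$ and $u$ is useless, since neither competitor belongs to a class that both functions minimize, and the whole argument hinges on recognizing that inserting $\hat v$ and passing to the median manufactures an \emph{odd} competitor whose energy is bounded below by $E(u)$ — which is exactly what closes the chain of equalities and allows the first two (single-class) assertions to be reused.
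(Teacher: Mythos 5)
Your argument is correct in its main thrust, and for the first two assertions it is essentially the standard route also taken in the paper (and in \cite{kink}, to which the paper defers): the truncation identity $E(\max)+E(\min)=E(w_1)+E(w_2)$ forces $\max$ and $\min$ of two same-class minimizers to be minimizers themselves, and then tangential contact plus ODE uniqueness for \eqref{ode} kills any intersection. For the third assertion, however, you take a genuinely different route. The paper's argument (visible in its proof of the periodic analogue, Proposition \ref{p4per}) is combinatorial: it counts the intersections of $v$ with $u$ on the two half-intervals, and shows that any asymmetric configuration of intersection points forces $v$ to cross $\hat v$, whence $v\equiv\hat v\equiv u$. Your median device --- applying the three-function rearrangement identity to $v,\hat v,u$ and observing that $\mathrm{med}(v,\hat v,u)$ is odd, so that its energy is bounded below by $E(u)$ and the chain of inequalities closes --- replaces that case analysis by a single clean energy identity, and it immediately produces the sandwich $\min(v,\hat v)\le u\le\max(v,\hat v)$, which is in fact a stronger structural statement than the proposition asks for. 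This is arguably more systematic and would generalize more readily (e.g.\ it does not rely on counting how many times the curves can cross).

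There is one small but genuine gap, in the degenerate case where the contact forces $v\equiv\hat v$ (in particular whenever the contact point is $x_0=0$, since $u(0)=0$ implies $v(0)=0=\hat v(0)$). You propose to conclude ``by applying the second assertion to the two odd minimizers $v$ and $u$,'' but the second assertion only says that two distinct odd minimizers meet \emph{only at the origin}; if the contact point is the origin itself, this yields nothing, and $v$ and $u$ could a priori remain distinct. The fix is one line: if $v$ is odd then $v\in H^1_{\mathrm{odd}}(\R)$, so $E(u)\le E(v)=\min_{H^1(\R)}E\le E(u)$, hence $u$ is itself a \emph{global} minimizer; since $u(0)=v(0)=0$ the two global minimizers intersect, and your first assertion gives $v\equiv u$. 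With that substitution the proof is complete.
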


When the function $\mu$ has $N\geq 2$ bumps, describing the number of transitions of the minimizers and their location, 
becomes a much more involved problem than the case $N=1$ examined in \cite{kink}. 
Indeed, since the function $f$ changes sign $2N-1$ times, one can show that the global and the odd minimizers have at most $2N-1$ zeros. 
Furthermore, as $\epsilon \to 0$, several regimes rule the number of zeros of the minimizers and their convergence, according to the values of $\alpha>0$.
When $N=1$, and $\epsilon\to 0$, we proved in \cite{kink} the convergence of the unique zero of the global minimizer, to $\pm \xi$ when $0<\alpha<\sqrt{2}$, 
and to $0$ when $\alpha>\sqrt{2}$. In case (B), with $N= 2$, we show below that the global minimizer has exactly $3$ zeros behaving in a similar way. 
When $0<\alpha<\sqrt{2}$, the graph of the global minimizer remains close to the graph of $\sqrt{\max(\mu,0)}$: two zeros approach the set $\{\pm\xi,\pm\xi'\}$ where $\mu$ vanishes, while the third zero goes to $0$, the local minimum of $\mu$. 
On the other hand, when $\alpha>\sqrt{2}$, two zeros go to $\pm\zeta$, the maxima of $\mu$, while the third zero still goes to $0$. For both regimes, $0<\alpha<\sqrt{2}$ and $\alpha>\sqrt{2}$, the graph of the global minimizer looks like periodic, and has a similar pattern on the intervals $[-\xi,0]$ and $[0.\xi]$.
In case (A) however, with $N=2$, in addition to the first threshold $\alpha^*=\sqrt{2}$, there is a second one $\alpha^{**}>\alpha^*$, that also determines the behavior and the number of zeros of the global minimizer.
We will see that as $\epsilon\to 0$, the global minimizer has exactly one zero going respectively to $\pm\xi$ when $0<\alpha<\alpha^*$, and to $\pm\zeta$ when $\alpha^*<\alpha<\alpha^{**}$. 
Finally, when $\alpha>\alpha^{**}$, the global minimizer has exactly three zeros going to $-\zeta$, $0$ and $\zeta$. Case (A) is thus more complex than case (B):
after the first threshold, the global minimizer 
starts moving away from the graph of $\sqrt{\max(\mu,0)}$, but it reproduces a periodic pattern only after the second threshold.
When $N\geq 3$, the same regimes mentioned above for $N=2$, can be observed in both cases 
(A) and (B). We also point out that the value of the second threshold $\alpha^{**}$ is independent of $N$.
For the sake of simplicity, and in order to avoid heavy notation and proofs, we focus in what follows only on the case $N=2$.

Let us first establish some results on the zeros of the global and the odd minimizers for arbitrary $\alpha>0$.

\begin{proposition}\label{p3}(Zeros of the global and the odd minimizer)
\begin{itemize}
\item[(i)] Assuming that $\alpha>0$, the minimizer $v$ (resp. $u$) constructed in Theorem \ref{th1} (resp. \ref{th2}) 
 has at most $3$ zeros, denoted by $\bar x_1<\bar x_2<\bar x_3$.
\item[(ii)] If $\epsilon>0$ and $\alpha>0$ remain in a bounded interval, there exists a constant $\delta>0$ such that when $\frac{\epsilon}{\alpha}<\delta$: the global minimizer $v$ has at least one zero, and in addition the zeros of $v$ or $u$ satisfy $|\bar x_i|\leq \xi+\mathcal O(\sqrt{\epsilon/\alpha})$. Finally, $v<0$ (resp $u<0$) on $(-\infty,\bar x_1)$, while $v>0$ (resp. $u>0$) on $(\bar x_k,\infty)$, where we have denoted by $\bar x_k$, the biggest zero of $v$ (resp. $u$). 
\end{itemize}
\end{proposition}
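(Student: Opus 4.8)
The plan is to base everything on a \emph{folding inequality} for the minimizers, and then to read off both the number and the location of the zeros from the sign pattern of $f$. Recall that from $f=-\mu'/2$ and \eqref{hyp3} one has $\{f>0\}=(-\zeta,0)\cup(\zeta,\infty)$ and $\{f<0\}=(-\infty,-\zeta)\cup(0,\zeta)$, so $f$ has exactly the three sign changes $-\zeta,0,\zeta$.

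\textbf{Part (i).} Let $v$ be the global minimizer and let $I$ be any maximal open interval on which $v$ keeps a constant sign; at a finite endpoint of $I$ one has $v=0$, and on the unbounded components $v\to0$. First I would establish the folding inequality
\[
\int_I f\,v\ \ge\ 0 .
\]
Indeed, replacing $v$ by $\tilde v:=-v$ on $I$ and $\tilde v:=v$ elsewhere yields a competitor in $H^1(\R)$ (continuous because $v$ vanishes at the endpoints of $I$) for which $\int|\tilde v'|^2$, $\int\mu\tilde v^2$ and $\int|\tilde v|^4$ are unchanged, while the forcing term changes by exactly $2\alpha\int_I fv$; minimality forces $\int_I fv\ge0$. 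For the odd minimizer $u$ one folds simultaneously on $I$ and on $-I$, which preserves oddness and, since $f,u$ are odd, gives the same inequality. As $f$ vanishes only at $-\zeta,0,\zeta$, this shows that every interval on which $v>0$ meets $\{f>0\}$ and every interval on which $v<0$ meets $\{f<0\}$ (otherwise $\int_I fv<0$).

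To count, note that $\epsilon\alpha f\not\equiv0$ on any interval, so $\{v=0\}$ contains no interval and the zeros are isolated; list the maximal sign intervals as $I_1<\dots<I_m$ with signs $\sigma_1,\dots,\sigma_m$, and pick witnesses $p_i\in I_i$ with $\sgn f(p_i)=\sigma_i$, so $p_1<\dots<p_m$. At a zero $z$ between $I_i$ and $I_{i+1}$: if it is simple ($\sigma_i\ne\sigma_{i+1}$), then $f$ changes sign at least once in $(p_i,p_{i+1})$; if it is a double zero ($\sigma_i=\sigma_{i+1}$, hence $v'(z)=0$), then $v''(z)=-\tfrac{\alpha}{\epsilon}f(z)$ together with the local extremum condition at $z$ forces $f(z)$ to have sign $-\sigma_i$ or to vanish, so $f$ takes sign $\sigma_i$ at $p_i,p_{i+1}$ and the opposite sign (or $0$) at $z$, producing at least two sign changes of $f$ in $(p_i,p_{i+1})$. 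Since the intervals $(p_i,p_{i+1})$ are disjoint and $f$ has exactly three sign changes, $\#\{\mathrm{simple}\}+2\,\#\{\mathrm{double}\}\le3$, whence the number of zeros is at most $3$.

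\textbf{Part (ii).} Here I would work in the regime $\epsilon/\alpha\to0$. I would deduce the existence of at least one zero of $v$ from an energy comparison: for $\epsilon/\alpha$ small the forcing dominates, and the sign-changing competitor obtained by smoothing $\sgn(f)\sqrt{\max(\mu,0)}$ has strictly smaller energy than any sign-definite function, because a constant-sign function cannot render $-\alpha\int f(\cdot)$ negative on the components of both $\{f>0\}$ and $\{f<0\}$; hence the minimizer cannot keep a constant sign. For the tails, let $\bar x_k$ be the largest zero; on $(\bar x_k,\infty)$ the sign of $v$ is constant, and in $\{x>\xi\}$ one has $\mu<0$ and $f>0$, so the linearized operator is coercive. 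A maximum-principle argument there (testing the equation against $v^-=\max(-v,0)$, as in \cite{kink}), together with the folding inequality $\int_{\bar x_k}^\infty fv\ge0$ and $f>0$ on $(\zeta,\infty)$, excludes $v<0$ in the tail; the symmetric argument gives $v<0$ on $(-\infty,\bar x_1)$. Finally, the localization $|\bar x_i|\le\xi+\mathcal O(\sqrt{\epsilon/\alpha})$ is the crux: since $\mu(\xi)=0$ and $\mu'(\xi)<0$, near $\xi$ one has $\mu(x)\approx\mu'(\xi)(x-\xi)$, and I would construct an upper barrier for $v$ on $\{x>\xi\}$ by balancing the diffusion $\epsilon^2v''$, the coercive term $|\mu|v$ and the forcing $\epsilon\alpha f$; the width over which $v$ can still vanish beyond $\xi$ before the barrier forces strict positivity is of order $\sqrt{\epsilon/\alpha}$, giving the bound (the finer estimate of the innermost zero near $0$ being postponed to Lemma \ref{p3bbis}). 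I expect this quantitative barrier estimate, together with the bookkeeping needed to make the double-zero step rigorous at degenerate zeros, to be the main technical obstacles, whereas the folding inequality itself is soft and robust.
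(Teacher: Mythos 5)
Your part (i) is essentially the paper's own argument in different clothing: the paper also compares $v$ with its ``folded'' version ($-|v|$ or $|v|$) on the intervals $(-\infty,-\zeta]$, $[-\zeta,0]$, $[0,\zeta]$, $[\zeta,\infty)$ cut out by the sign of $f$, and then uses $\epsilon^2 v''(z)=-\epsilon\alpha f(z)$ to exclude touching zeros, exactly as you do; your reorganization into a folding inequality plus a count of sign changes of $f$ is fine (the degenerate case $f(z)=0$ at a double zero, which you flag, is handled in the paper by confining the analysis to the open intervals where $f$ has a strict sign).

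Part (ii) contains a genuine gap: the proposed proof that $v$ must vanish. For $0<\alpha_0<\sqrt{2}$ under (A), the smoothed $\sgn(f)\sqrt{\max(\mu,0)}$ competitor pays $\tfrac{2\sqrt{2}}{3}\mu(\cdot)^{3/2}$ for each interior transition at $\pm\zeta$ and $0$ (points where $\mu>0$) while gaining only $O(\alpha_0)$ from the forcing term, so it has \emph{strictly larger} energy than the sign-definite corner layer; your claim that it beats every sign-definite function is false precisely in the regime where the statement is hardest. The minimizer does have a zero there, but it sits near $-\xi$ (a shadow kink) and its negative part is exponentially small, hence invisible to any leading-order energy comparison. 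The mechanism the paper uses is pointwise, not variational: for $x<-\xi$ one has $\mu<0$ and $f<0$, so wherever $v\geq 0$ every term in $\epsilon^2v''=-\mu v+v^3-\epsilon\alpha f$ is nonnegative and $-\epsilon\alpha f\geq \epsilon\alpha m'$ with $m'=\min_{[-\xi-1,-\xi]}(-f)>0$; thus $v''\geq m'\alpha/\epsilon$ and $v'\geq 0$ on any such interval, and integrating twice against the uniform bound $|v|\leq M$ gives $M\geq \tfrac{m'\alpha}{2\epsilon}(\xi+x_0)^2$. This two-line convexity estimate simultaneously yields $v<0$ on $(-\infty,-\xi-O(\sqrt{\epsilon/\alpha}))$ and $v>0$ on $(\xi+O(\sqrt{\epsilon/\alpha}),\infty)$ — hence at least one zero by continuity, the tail sign statements, and the localization $|\bar x_i|\leq \xi+O(\sqrt{\epsilon/\alpha})$ all at once. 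It uses only the sign of $f$ and the bound $M$, not the linearization of $\mu$ at $\xi$ that your barrier sketch invokes. Note also that your folding inequality $\int_{\bar x_k}^{\infty}fv\geq 0$ cannot by itself rule out $v<0$ on the last tail when $\bar x_k<\zeta$, since $f$ changes sign at $\zeta$; the convexity argument closes this as well.
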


\begin{proof}[Proof of Proposition~\ref{p3}]
The proof is similar for $v$ and $u$. Without loss of generality, we examine only the case of the global minimizer.

(i)  Let us start with two remarks: 1) If $v$ has a zero $y\in (-\infty,-\zeta]$, then it is the unique zero of $v$ in $(-\infty,-\zeta]$, and $v<0$ on $(-\infty,y)$. Moreover, if $y<-\zeta$, then $v>0$ on $(y,-\zeta]$. A similar statement holds for the zero of $v$ in $[\zeta,\infty)$. We first show that $v(z)<0$, $\forall z<y$. Indeed, if $v(z)>0$ for some $z<y$, then $E(v ,(-\infty,y])>E(-|v|,(-\infty,y])$, which is a contradiction. Now, if $v(z)=0$ for some $z<y$, then according to what precedes $v$ has a maximum at $z$. It follows that $v(z)=v'(z)=0$, and $v''(z) \leq 0$, which is impossible, since by \eqref{ode} we have: $\epsilon v''(z)=-\alpha f(z)>0$. Thus we have proved that $v(y)=0$, with $y \leq -\zeta$, implies that $v(z)<0$, $\forall z<y$. As a consequence, it is clear that $y$ is the only zero of $v$ in the interval $(-\infty,-\zeta]$. Furthermore, if $y<-\zeta$, then $v(z)>0$ for $z \in (y,-\zeta]$, since by \eqref{ode}, $v$ cannot have a local maximum at $y$. Similarly one can prove our second remark: 2) In the intervals $[-\zeta,0]$ or $[0,\zeta]$, $v$ has at most two zeros $y_1$ and $y_2$, and $v>0$ (resp. $v<0$) on $(y_1,y_2)\subset [-\zeta,0]$, (resp. $\subset [0.\zeta]$). Moreover, if $y_1>-\zeta$ or $y_2<0$, then $v<0$ on $[-\zeta,y_1)\cup (y_2,0]$. Similarly, if $y_1>0$ or $y_2<\zeta$, then $v>0$ on $[0,y_1)\cup (y_2,\zeta]$.
Thanks to these remarks, we are going to establish that $v$ has at most $3$ zeros.
Suppose first that $v$ has $2$ zeros in $(-\zeta,0]$ (the proof is similar if we take the interval $(0,\zeta]$). By remark 2), $v(-\zeta)<0$, thus according to remark 1), $v<0$ on $(-\infty,-\zeta]$. Moreover, again by remark 2), $v$ has at most one zero in $(0,\zeta]$, since $v(0)\leq 0$. If $v$ does not vanish on $(0,\zeta]$, we are done since $v$ has at most one zero in $(\zeta,\infty)$. Otherwise, if $v$ vanishes once in $(0,\zeta]$, then $v(\zeta)\geq 0$ implies by remark 1) that $v>0$ on $(\zeta,\infty)$. Thus, in all cases $v$ has at most $3$ zeros. To conclude, it remains to show than $v$ cannot vanish once in each of the intervals $(-\infty,-\zeta]$, $(-\zeta,0]$, $(0,\zeta]$ and $(\zeta,\infty)$. Indeed, in this case $v$ would change sign at each of these four zeros, in contradiction with remark 1).

(ii) Suppose that $x_0<-\xi$ is such that $v(x_0)> 0$. Our first claim is that $v'(x_0)>0$. Indeed, suppose by contradiction that $v'(x_0)\leq 0$. Setting $$m:=\inf\{ x<x_0: \ v \geq 0\text{ on }[x,x_0]\},$$
one can see by \eqref{ode}, that $v$ is convex on the interval $(m,x_0]$, and thus $v \geq  v(x_0)$ on $(m,x_0]$. It follows that $m=-\infty$, which is a contradiction since $\lim_{-\infty}v=0$. This proves our claim. Now, let $M>0$ be the constant (cf. Proposition \ref{p0} (iii)), such that $|v_{\epsilon,\alpha}|\leq M$ when $\epsilon$ and $\alpha$ remain bounded, and let $m'=\min_{[-\xi-1,-\xi]}(-f)$. The argument above implies that $v'\geq 0$ and $v \geq 0$ on the interval $[x_0,-\xi]$, thus in view of \eqref{ode} we have $v''\geq -\frac{\alpha}{\epsilon}f$ on $[x_0,-\xi]$. In particular, for any  $x_0 \in [-\xi-1,-\xi]$ such that $v(x_0)> 0$, we obtain 
\begin{equation}\label{equzero}
M\geq v(-\xi)-v(x_0)\geq m' \frac{\alpha}{\epsilon} \frac{(\xi+x_0)^2}{2}.
\end{equation}
From this inequality, we see
that if $\frac{\epsilon}{\alpha}<\delta:=\frac{m'}{2M}$, then $v(x_0)<0$, $\forall x_0<-\xi -\sqrt{\frac{\epsilon}{\alpha}\frac{2M}{m'}}$. 
Indeed, $\frac{\epsilon}{\alpha}<\delta$, implies that $-\xi -\sqrt{\frac{\epsilon}{\alpha}\frac{2M}{m'}}>-\xi-1$, and since \eqref{equzero} does not hold for $-\xi-1 \leq x_0<-\xi -\sqrt{\frac{\epsilon}{\alpha}\frac{2M}{m'}}$, it follows that $v(x_0)<0$, for $-\xi-1 \leq x_0<-\xi -\sqrt{\frac{\epsilon}{\alpha}\frac{2M}{m'}}$. By remark 1) in the proof of (i), we conclude that  $v(x_0)<0$, $\forall x_0<-\xi -\sqrt{\frac{\epsilon}{\alpha}\frac{2M}{m'}}$. Similarly, we have $v(x_0)>0$, $\forall x_0>\xi +\sqrt{\frac{\epsilon}{\alpha}\frac{2M}{m'}}$, when $\frac{\epsilon}{\alpha}<\delta$. The proof of (ii) is now complete.
\end{proof}

\begin{proposition}\label{p3ter}(Zeros of the minimizers under assumption (B))
If assumption (B) holds, and if $\epsilon$ and $\alpha>0$ remain in a bounded interval, there exists a constant $\delta>0$ such that when $\frac{\epsilon}{\alpha}<\delta$, the minimizers $v$ and $u$ have exactly $3$ zeros. Moreover, $u$ and $v$ change sign at $\bar x_1$, $\bar x_2$, $\bar x_3$, and
\begin{itemize}
\item $\bar x_1 \in \big(-\xi -\mathcal{O }\big(\sqrt{\epsilon/\alpha}\big), -\xi'+\mathcal{O }\big(\sqrt{\epsilon/\alpha}\big)\big)$, 
\item $|\bar x_2|\leq\mathcal{O }\big(\sqrt[3]{\epsilon/\alpha}\big)$,
\item $\bar x_3 \in \big(\xi' -\mathcal{O }\big(\sqrt{\epsilon/\alpha}\big), \xi+\mathcal{O }\big(\sqrt{\epsilon/\alpha}\big)\big)$,
\end{itemize}
\end{proposition}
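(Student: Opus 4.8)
The plan is to reduce the whole statement to a single claim about the sign of the minimizer on the central region $(-\xi',\xi')$, where $\mu<0$ by assumption (B): namely, that there exist points $a\in(-\xi',0)$ and $b\in(0,\xi')$ with $v(a)>0$ and $v(b)<0$ (and the analogous statement for $u$). Granting this, the conclusion follows quickly. By Proposition~\ref{p3}(ii) we have $v<0$ for $x\le-\xi-1$ and $v>0$ for $x\ge\xi+1$ once $\epsilon/\alpha<\delta$, so applying the intermediate value theorem on $(-\xi-1,a)$, $(a,b)$ and $(b,\xi+1)$ produces three \emph{distinct} zeros, one in each interval. Since Proposition~\ref{p3}(i) gives at most three zeros, there are exactly three, $\bar x_1<\bar x_2<\bar x_3$, with $\bar x_1<a<0$, $\bar x_2\in(a,b)\subset(-\xi',\xi')$ and $b<\bar x_3$; because $v(a)>0$ forces $v>0$ on $(\bar x_1,\bar x_2)$ and $v(b)<0$ forces $v<0$ on $(\bar x_2,\bar x_3)$, the sign pattern is $-,+,-,+$, and in particular $v$ changes sign at each zero. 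The outer localizations then come out of the same argument that produces $a$ and $b$.

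The key mechanism is a convexity/concavity estimate driven by the forcing, in the spirit of the proof of Proposition~\ref{p3}(ii). Rewriting \eqref{ode} as $\epsilon^2 v''=-\mu v+v^3-\epsilon\alpha f$ and using $\mu<0$ on $(-\xi',\xi')$ together with the signs $f>0$ on $(-\xi',0)$ and $f<0$ on $(0,\xi')$ (recall $f=-\mu'/2$ and $\mu'>0$ on $(0,\zeta)$), one sees that wherever $v$ takes the sign \emph{opposite} to the one favoured by the forcing the right-hand side has a definite sign: if $v\le 0$ on a subinterval of $(-\xi',0)$ then $v''\le-\tfrac{\alpha}{\epsilon}f\le-\tfrac{\alpha f_0}{\epsilon}$, and if $v\ge 0$ on a subinterval of $(0,\xi')$ then $v''\ge\tfrac{\alpha}{\epsilon}|f|\ge\tfrac{\alpha f_0}{\epsilon}$, where $f_0>0$ bounds $|f|$ from below on a fixed compact piece bounded away from $0$. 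Since this curvature blows up like $\alpha/\epsilon$ while $|v|\le M$ (Proposition~\ref{p0}(iii)), such one-signed behaviour cannot persist on an interval of fixed length once $\epsilon/\alpha<\delta$: a concave (resp. convex) function with $|v''|\ge\alpha f_0/\epsilon$ would have to exceed $M$ (resp. drop below $-M$) at an interior point. This simultaneously yields $a$ and $b$ and, by integrating the differential inequality from $\pm\xi'$ up to the relevant zero, the sharper bounds $\bar x_1\le-\xi'+\mathcal O(\sqrt{\epsilon/\alpha})$ and $\bar x_3\ge\xi'-\mathcal O(\sqrt{\epsilon/\alpha})$; combined with $|\bar x_i|\le\xi+\mathcal O(\sqrt{\epsilon/\alpha})$ from Proposition~\ref{p3}(ii) this gives the stated intervals for $\bar x_1$ and $\bar x_3$.

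For the central zero $\bar x_2$ I would argue separately. For the odd minimizer $u$ oddness gives $\bar x_2=0$, so the estimate is trivial. For $v$, say $\bar x_2\ge 0$ (the other case is symmetric): on $(0,\bar x_2)$ the sign pattern gives $v>0$, and since $\mu<0$ and $f<0$ there, the same rearrangement of \eqref{ode} shows $v$ is convex with $v''\ge\tfrac{\alpha}{\epsilon}|f|\ge\tfrac{\alpha c_0}{\epsilon}\,x$, using that $f'(0)<0$ forces $|f(x)|\ge c_0 x$ near $0$. As $v'$ is increasing and $v'(\bar x_2)\le 0$, integrating twice gives
\[
M\ \ge\ v(0)\ =\ \int_0^{\bar x_2}|v'|\,\dd x\ \ge\ \frac{\alpha c_0}{3\epsilon}\,\bar x_2^{\,3},
\]
whence $|\bar x_2|\le\mathcal O(\sqrt[3]{\epsilon/\alpha})$. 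This is the delicate local estimate, and it is exactly the place where the hypothesis $f'(0)<0$ is needed.

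The main obstacle is not any single inequality but the bookkeeping that makes the convexity argument self-contained. Because $|f|$ degenerates at $0$, the curvature lower bound $f_0$ is only available away from the origin, so one must first rule out, on a \emph{fixed} length scale, that $v$ keeps the wrong sign all the way up to $0$ (this is what forbids the outer zeros from drifting into the centre), and only then bootstrap to the $\sqrt{\epsilon/\alpha}$ localization near $\pm\xi'$ and to the cube-root localization of $\bar x_2$. A secondary point is that everything must be carried out for both $v$ and $u$; for $u$ the oddness pins $\bar x_2=0$ and halves the work. Finally, it is precisely case (B) — i.e. $\mu<0$ throughout $(-\xi',\xi')$ — that makes the sign of the right-hand side of \eqref{ode} definite on the whole central region and thereby forces the three transitions.
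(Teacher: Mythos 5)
Your argument is correct and is essentially the paper's own: the paper isolates a single lemma (Lemma~\ref{p3bbis}) which, via exactly your mechanism — the forcing term makes $v''$ have a definite sign of size $\alpha|f|/\epsilon$ wherever the solution has the ``wrong'' sign on $(-\xi',\xi')$, contradicting the uniform bound $M$ — shows the solution is negative on $\big(\mathcal O\big(\sqrt[3]{\epsilon/\alpha}\big),\xi'-\mathcal O\big(\sqrt{\epsilon/\alpha}\big)\big)$ (and symmetrically positive on the mirror interval), and then concludes from Proposition~\ref{p3} just as you do. Your reorganization (fixed-scale sign points first, then the $\sqrt{\epsilon/\alpha}$ and cube-root refinements by integrating the differential inequality, with $f'(0)<0$ giving $|f(x)|\gtrsim |x|$ near the origin) matches the paper's use of $m_\delta>\lambda\delta$ and $m'$ and is only a cosmetic difference.
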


\begin{proof}[Proof of Proposition~\ref{p3ter}]
It is a direct consequence of Proposition \ref{p3}, and of Lemma \ref{p3bbis} below applied to the successive bumps of $\mu$.

\begin{lemma}\label{p3bbis}
We assume that (B) holds. For $\epsilon$ and $\alpha$ belonging to a bounded interval, let $u_{\epsilon,\alpha}$ be a solution of \eqref{ode} converging to $0$ at $\pm\infty$. Then, $u_{\epsilon,\alpha}(x) <0$ for $0<\mathcal{O }\big(\sqrt[3]{\epsilon/\alpha}\big)<x<\xi' -\mathcal{O }\big(\sqrt{\epsilon/\alpha}\big)$, $\epsilon/\alpha \ll 1$.
\end{lemma}

\begin{proof}[Proof of Lemma~\ref{p3bbis}]
Let $M>0$ be the constant (cf. Proposition \ref{p0} (iii)), such that $|u_{\epsilon,\alpha}|\leq M$ when $\epsilon$ and $\alpha$ remain bounded.
For small $\delta>0$, we define $m_\delta=\min_{[\delta,\xi']}(-f)>\lambda \delta$, for a constant $\lambda>0$ (since $0$ is a nondegenerate minimum of $\mu$), and set $m'=\min_{[\xi'-\delta',\xi']}(-f)$ for some $\delta'>0$ fixed and small. Let us prove our first claim: if $x_0 \in \big(2\sqrt[3]{\frac{2M\epsilon}{\lambda \alpha}},\xi'\big]$ is such that $u(x_0)>0$, then $u'(x_0)>0$.
Suppose by contradiction that $u'(x_0)\leq 0$. Then, in view of \eqref{ode}, one can easily see that $u$ is positive and convex on the interval $I:=\big[\sqrt[3]{\frac{2M\epsilon}{\lambda \alpha}},x_0\big]$. In addition, $u''\geq \frac{\lambda \alpha}{\epsilon}\sqrt[3]{\frac{2M\epsilon}{\lambda \alpha}}$ on $I$, thus we obtain for $x \in I$:
\begin{equation}\label{equzerobis}
M\geq u(x)-u(x_0)\geq  u'(x_0)(x-x_0)+\frac{\lambda \alpha}{2\epsilon} \sqrt[3]{\frac{2M\epsilon}{\lambda \alpha}}|x-x_0|^2,\nonumber
\end{equation} 
and taking $x =\sqrt[3]{\frac{2M\epsilon}{\lambda \alpha}} $, we reach a contradiction. This proves our first claim. To conclude, we are going to show that if $x_0 \in \big(2\sqrt[3]{\frac{2M\epsilon}{\lambda\alpha}}, \xi'-\sqrt{\frac{2M\epsilon}{m' \alpha}}\big)$, then $u(x_0)< 0$. Suppose by contradiction that $u(x_0)>0$. In view of our first claim, $u'(x_0)>0$. Moreover, $u$ is positive and convex on $[x_0,\xi']$. Setting $y=\xi'-\delta'$ and $z=\xi'-\sqrt{\frac{2M\epsilon}{m' \alpha}}$, we have  
\begin{equation}\label{equzeroter}
\frac{\epsilon}{\alpha}<\frac{m' \delta'^2}{2M}\Rightarrow y<z\Rightarrow M\geq u(\xi')-u(x)\geq  \frac{m' \alpha}{2\epsilon} |\xi'-x|^2 ,\ \forall x \in [\max(x_0,y), \xi'],\nonumber
\end{equation}
which is a contradiction. Thus we have proved that $u \leq 0$ on $J:=\big(2\sqrt[3]{\frac{2M\epsilon}{\lambda\alpha}}, \xi'-\sqrt{\frac{2M\epsilon}{m' \alpha}}\big)$, when $\frac{\epsilon}{\alpha}<\frac{m' \delta'^2}{2M}$. The strict inequality $u<0$ on $J$ follows from \eqref{ode}, since $u$ cannot attain a maximum on this interval. 
\end{proof}
\end{proof}
\section{Renormalized energy}
\label{sec:renormalized}

The minimum of the energy defined in \eqref{funct 0} is nonpositive and tends to $-\infty$ as $\epsilon \to 0$. Since we are mostly interested in the behavior of the minimizers as $\epsilon \to 0$, it is useful to define a renormalized energy, which is obtained by  adding to \eqref{funct 0} a suitable term so that the result is bounded from below and above by an $\ve$ independent constant.

When $\alpha=0$, we define the renormalized energy as 
\begin{equation}\label{renorm}
\mathcal{E}(u):=E(u)+\int_{\mu>0}\frac{\mu^2}{4\epsilon}=\int_{\R}\frac{\epsilon}{2}|u'|^2+\int_{\mu>0}
\frac{(u^2-\mu)^2}{4\epsilon}+\int_{\mu<0}\frac{u^2(u^2-2\mu)}{4\epsilon}.
\end{equation}  
To define the renormalized energy when $\alpha>0$, we first introduce $\sigma(x)$ as the value of $y$ 
that minimizes the polynomial
$$
P_x(y)=y^4-2\mu(x)y^2-4\epsilon\alpha f(x)y .
$$
It is easy to see that the minimum of $P_x$ is nonpositive, and
\begin{itemize}
\item when $f(x)=0$, it is attained at the points $\pm\sqrt{\mu(x)}$ if $\mu(x)>0$, and at $0$ if $\mu(x)\leq 0$.
\item when $f(x)>0$, it is attained in the interval $(0,\infty)$ at the only point $\sigma(x)>0$ where
$P'_x(\sigma)=\frac{1}{\epsilon}(\sigma^3-\mu(x)\sigma-\epsilon\alpha f(x))=0$. In addition, $\sigma^2(x)>\mu(x)$.
\item similarly, when $f(x)<0$, it is attained at the only point $\sigma(x)<0$ where
$P'_x(\sigma)=0$, and moreover $\sigma^2(x)>\mu(x)$.
\end{itemize}
Thus, $\sigma(x)>0$ for $f(x)>0$ and is odd. We set
\begin{equation}\label{renorm2}
\mathcal{E}(u)
:=
E(u)+\int_\R\Big( -\frac{\sigma^4(x)}{4\epsilon}+\frac{\mu(x)\sigma^2(x)}{2\epsilon}+\alpha f(x)\sigma(x)\Big)\dd x ,
\end{equation}  
and it holds that $\mathcal{E}(u)\geq 0$.


Furthermore, when $\mu(x)\leq 0$, we have 
\begin{equation}\label{bound1}
0 \geq\min P_x \geq \min\Big( \frac{y^4}{4\epsilon}-\alpha f(x)y\Big)=-\frac{3\epsilon^{1/3}}{4}(\alpha f(x))^{4/3} \text{ and } |\sigma(x)|\leq |\epsilon\alpha f(x)|^{1/3}.
\end{equation}
On the other hand, when $\mu(x)>0$ and $f(x)\neq 0$, we obtain the two following bounds for $\sigma(x)$:
\begin{equation}\label{bound2}
\frac{P'_x(\sigma(x))}{\sigma(x)}=0 \Longrightarrow |\sigma^2(x)-\mu(x)|=\frac{\epsilon\alpha |f(x)|}{|\sigma(x)|}\leq \frac{\epsilon\alpha |f(x)|}{\sqrt{\mu(x)}},
\end{equation}
and
\begin{equation}\label{bound3}
|\sigma (x)|-\sqrt{\mu(x)}\leq |\epsilon \alpha f(x)|^{1/3}.
\end{equation}
The latter inequality follows by observing that when $\mu(x)>0$ and $f(x)>0$, $\phi(x):=\sigma (x)-\sqrt{\mu(x)}>0$ satisfies:
$$0=\phi^3+3\phi^2\sqrt{\mu}+2\phi\mu-\epsilon\alpha f \geq \phi^3-\epsilon\alpha f \Rightarrow \phi(x)\leq (\epsilon\alpha f(x))^{1/3}.$$
According to what precedes, the integral $$0<\int_\R -P_x(\sigma(x))\dd x = \int_\R\Big( -\frac{\sigma^4(x)}{4\epsilon}+\frac{\mu(x)\sigma^2(x)}{2\epsilon}+\alpha f(x)\sigma(x)\Big)\dd x$$
is finite, and we define the renormalized energy as follows:
\begin{equation*}
0\leq \mathcal{E}(u):=E(u)+\int_\R -P_x(\sigma(x))\dd x
=E(u)+\int_\R\Big( -\frac{\sigma^4(x)}{4\epsilon}+\frac{\mu(x)\sigma^2(x)}{2\epsilon}+\alpha f(x)\sigma(x)\Big)\dd x,
\end{equation*}  
or equivalently
\begin{align}\label{renorm3}
\mathcal{E}(u)=&\int_{\R}\frac{\epsilon}{2}|u'|^2+\int_{\mu>0}
\frac{(u^2-\mu)^2}{4\epsilon}+\int_{\mu<0}\frac{u^2(u^2-2\mu)}{4\epsilon}-\int_\R\alpha f u +\int_{\mu>0}\alpha f \sigma  \\
&-\int_{\mu>0}\frac{(\sigma^2-\mu)^2}{4\epsilon} +\int_{\mu<0}\Big( -\frac{\sigma^4}{4\epsilon}+\frac{\mu\sigma^2}{2\epsilon}+\alpha f\sigma\Big). \nonumber
\end{align}  
In view of \eqref{bound1}, \eqref{bound2} and \eqref{bound3} we can estimate the two last integrals appearing in \eqref{renorm3}. We have
\begin{equation}\label{bound4}
0\leq \int_{\mu<0}\Big( -\frac{\sigma^4}{4\epsilon}+\frac{\mu\sigma^2}{2\epsilon}+\alpha f\sigma\Big)\leq \int_{\mu<0}\frac{3\epsilon^{1/3}}{4}|\alpha f|^{4/3}=\mathcal O(\epsilon^{1/3}),
\end{equation}
and assuming that for instance hypothesis (A) holds \footnote{The computation is similar when hypothesis (B) holds.}:
\begin{align}\label{bound5}
\int_{\mu>0}\frac{(\sigma^2-\mu)^2}{4\epsilon}&=\int_{|x|<\xi-\epsilon^{2}}\frac{(\sigma^2-\mu)^2}{4\epsilon}+\int_{\xi-\epsilon^2<|x|<\xi}\frac{(\sigma^2-\mu)^2}{4\epsilon}\nonumber \\
&\leq \int_{|x|<\xi-\epsilon^2}\frac{\epsilon \alpha^2 f^2}{4\mu}+\int_{\xi-\epsilon^2<|x|<\xi}\frac{| \alpha f|^{2/3}}{4\epsilon^{1/3}}(|\sigma|+\sqrt{\mu})^2\nonumber\\
&=\mathcal O(\epsilon| \ln \epsilon|). 
\end{align}

Finally, in the next Proposition, we compute an upper bound of the renormalized energy for the global and the odd minimizers, as $\epsilon \to 0$. These bounds are obtained in each case by choosing among all possible candidates, the test function with the smallest energy. By reading the proof of Theorem \ref{p10sh} below, one can better understand why our choice of the test functions in Proposition \ref{p5aa} and \ref{p5} is relevant.   

\begin{proposition}\label{p5aa}

Under hypothesis (A), we have for the global minimizer $v$ constructed in Theorem \ref{th1}:
\begin{equation}\label{qua22}
\limsup_{\epsilon\to 0, \alpha\to \alpha_0}\mathcal{E}(v_{\epsilon,\alpha})\leq 
\begin{cases}
\alpha_0 K  &\text{for } 0\leq \alpha_0 \leq \sqrt{2}, \\
\frac{2}{3}(\sqrt{2}-\alpha_0)(\mu(\zeta))^{3/2}+\alpha_0 K &\text{for } \sqrt{2}\leq \alpha_0\leq \alpha^{**} ,\\
\frac{2}{3}\big(2(\sqrt{2}-\alpha_0)(\mu(\zeta))^{3/2}+(\sqrt{2}+\alpha_0)(\mu(0))^{3/2}\big)+\alpha_0 K &\text{for } \alpha_0 \geq \alpha^{**},
\end{cases}
\end{equation}
where $\alpha^{**}=\sqrt{2}\frac{(\mu(\zeta))^{3/2}+(\mu(0))^{3/2}}{(\mu(\zeta))^{3/2}-(\mu(0))^{3/2}}$, and we have set $K=\int_{\mu>0}| f| \sqrt{\mu}=\frac{2}{3}\big[2(\mu(\zeta))^{3/2}- (\mu(0))^{3/2}\big]$.
Similarly, we have for the odd minimizer $u$ constructed in Theorem \ref{th2}:
\begin{equation}\label{quab122odd}
\limsup_{\epsilon\to 0, \alpha\to \alpha_0}\mathcal{E}(u_{\epsilon,\alpha})\leq 
\begin{cases}
\frac{2}{3}(\sqrt{2}-\alpha_0)(\mu(0))^{3/2}+\alpha_0 K &\text{for } 0\leq \alpha_0\leq \alpha_{\mathrm{odd}} ,\\
\frac{2}{3}\big(2(\sqrt{2}-\alpha_0)(\mu(\zeta))^{3/2}+(\sqrt{2}+\alpha_0)(\mu(0))^{3/2}\big)+\alpha_0 K &\text{for } \alpha_0 \geq \alpha_{\mathrm{odd}},
\end{cases}
\end{equation}
where $\alpha_{\mathrm{odd}}=\frac{\sqrt{2}(\mu(\zeta))^{3/2}}{(\mu(\zeta))^{3/2}-(\mu(0))^{3/2}} \in (\sqrt{2},\alpha^{**})$.
\end{proposition}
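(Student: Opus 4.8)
The plan is to argue by the direct method. Since $v$ minimizes $\mathcal{E}$ over $H^1(\R)$ and $u$ minimizes it over $H^1_{\mathrm{odd}}(\R)$ (Theorems \ref{th1}, \ref{th2}), it suffices to produce, for each range of $\alpha_0$, an admissible test function $w$ whose renormalized energy is asymptotically bounded by the claimed quantity. First I would record a reduction of $\mathcal{E}$: by \eqref{renorm3} together with the estimates \eqref{bound4} and \eqref{bound5} for its last two integrals, any admissible $w$ satisfies
\begin{equation*}
\mathcal{E}(w)=\int_{\R}\frac{\epsilon}{2}|w'|^2+\int_{\mu>0}\frac{(w^2-\mu)^2}{4\epsilon}+\int_{\mu<0}\frac{w^2(w^2-2\mu)}{4\epsilon}-\alpha\int_{\R} f w+\alpha\int_{\mu>0} f\sigma+o(1).
\end{equation*}
Moreover the $w$-independent term is handled once and for all: using \eqref{bound2}--\eqref{bound3}, which control $\big|\,|\sigma|-\sqrt{\mu}\,\big|$ on $\{\mu>0\}$, and since $\sigma$ has the sign of $f$, one gets $\alpha\int_{\mu>0}f\sigma=\alpha\int_{\mu>0}|f|\sqrt\mu+o(1)=\alpha K+o(1)$. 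On $\{\mu<0\}=\R\setminus[-\xi,\xi]$ I would always take $w=\sigma$, so that, after cancellation with the renormalization, the contribution of $\{\mu<0\}$ is $o(1)$ by \eqref{bound4}.

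Next I would build $w$ on $\{\mu>0\}=(-\xi,\xi)$ as a combination of the elementary kinks. Given finitely many interior transition points $-\xi<x_1<\dots<x_k<\xi$ and a sign pattern $s\in\{\pm1\}$ constant between consecutive points, let $w=s\sqrt\mu$ away from the $x_i$ and from $\pm\xi$, and replace $w$ in an $\epsilon$-neighbourhood of each $x_i$ by the optimal heteroclinic (frozen-coefficient) profile connecting $-\sqrt{\mu(x_i)}$ to $+\sqrt{\mu(x_i)}$; near $\pm\xi$, $w$ is glued to $\sigma$ so that $w\in H^1$. A localized computation then gives
\begin{equation*}
\int_{\R}\frac{\epsilon}{2}|w'|^2+\int_{\mu>0}\frac{(w^2-\mu)^2}{4\epsilon}=\sum_{i=1}^{k}\frac{2\sqrt2}{3}\big(\mu(x_i)\big)^{3/2}+o(1),
\end{equation*}
since each interior transition contributes the standard kink energy $\tfrac{2\sqrt2}{3}(\mu(x_i))^{3/2}$, while the slowly varying parts and the degenerate transition at $\pm\xi$ contribute $o(1)$. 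For the forcing I set $g:=\tfrac23\mu^{3/2}$ on $\{\mu>0\}$, so that $f\sqrt\mu=-\tfrac12 g'$; since $g(\pm\xi)=0$, integration by parts yields $-\alpha\int_{\mu>0}f w=-\tfrac{\alpha}{2}\sum_i\Delta s_i\,g(x_i)$, where $\Delta s_i=\pm2$ is the jump of $s$ at $x_i$. Collecting everything and using $\tfrac{2\sqrt2}{3}(\mu(x_i))^{3/2}=\sqrt2\,g(x_i)$,
\begin{equation*}
\mathcal{E}(w)=\alpha K+\sum_{i=1}^{k} g(x_i)\Big(\sqrt2-\tfrac{\alpha}{2}\Delta s_i\Big)+o(1),
\end{equation*}
so a transition from $-$ to $+$ costs $(\sqrt2-\alpha)\,g(x_i)$ and a transition from $+$ to $-$ costs $(\sqrt2+\alpha)\,g(x_i)$.

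It then remains to minimize over the choice of transition points. Recall that $g$ is even, nonnegative, vanishes at $\pm\xi$, attains its maximum $\tfrac23(\mu(\zeta))^{3/2}$ at $\pm\zeta$, and takes the value $\tfrac23(\mu(0))^{3/2}\in(0,g(\zeta))$ at $0$. For the global minimizer the optimal configurations are: no interior transition when $0\le\alpha_0\le\sqrt2$, giving the bound $\alpha_0K$; a single transition from $-$ to $+$ at $\zeta$ when $\sqrt2\le\alpha_0\le\alpha^{**}$, giving $\tfrac23(\sqrt2-\alpha_0)(\mu(\zeta))^{3/2}+\alpha_0K$; and three transitions at $-\zeta,0,\zeta$ (from $-$ to $+$, from $+$ to $-$, from $-$ to $+$) when $\alpha_0\ge\alpha^{**}$, giving the last line of \eqref{qua22}. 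Comparing the one- and three-transition energies shows the crossover occurs exactly at $\alpha^{**}=\sqrt2\frac{(\mu(\zeta))^{3/2}+(\mu(0))^{3/2}}{(\mu(\zeta))^{3/2}-(\mu(0))^{3/2}}$, which is \eqref{qua22}. For the odd minimizer the test function must vanish at the origin, so a transition at $0$ is forced; minimizing over odd sign patterns compares the single transition at $0$, of energy $\tfrac23(\sqrt2-\alpha_0)(\mu(0))^{3/2}+\alpha_0K$, with the symmetric three-transition configuration at $-\zeta,0,\zeta$, the crossover now being $\alpha_{\mathrm{odd}}=\frac{\sqrt2(\mu(\zeta))^{3/2}}{(\mu(\zeta))^{3/2}-(\mu(0))^{3/2}}$; this yields \eqref{quab122odd}.

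The main obstacle is the test-function energy estimate of the second paragraph, and within it the degenerate transition near $\pm\xi$, where $\mu\to0$ and $\sqrt\mu$ has unbounded derivative: one must cut $w$ off at distance $\sim\epsilon^2$ from $\pm\xi$ and glue to $\sigma$, checking as in the derivation of \eqref{bound5} that the extra gradient energy produced there is only $\mathcal O(\epsilon|\ln\epsilon|)=o(1)$. One must also verify that the interior kink profiles contribute exactly $\tfrac{2\sqrt2}{3}(\mu(x_i))^{3/2}+o(1)$, the errors from the spatial variation of $\mu$ near $x_i$ being negligible. Once these are under control, the combinatorial minimization giving the thresholds $\sqrt2$, $\alpha^{**}$ and $\alpha_{\mathrm{odd}}$ is elementary.
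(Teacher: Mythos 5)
Your proposal is correct and follows essentially the same route as the paper: both construct piecewise test functions equal to $\pm\sqrt{\mu}$ away from the transition points, with rescaled hyperbolic-tangent profiles at interior transitions and a decaying tail near $\pm\xi$, and both obtain the thresholds $\sqrt{2}$, $\alpha^{**}$, $\alpha_{\mathrm{odd}}$ by comparing the resulting energies. The only difference is presentational — you derive a general transition-cost formula $\mathcal{E}(w)=\alpha K+\sum_i g(x_i)\big(\sqrt{2}-\tfrac{\alpha}{2}\Delta s_i\big)+o(1)$ and minimize combinatorially, whereas the paper directly writes down the four optimal test functions $\phi$, $\eta$, $\psi$, $\chi$ corresponding to the configurations you identify.
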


\begin{proof}[Proof of Proposition~\ref{p5aa}]
We consider the $C^1$ piecewise function:
\begin{equation}
\phi(x)=
\begin{cases}
k_\epsilon \epsilon e^{-\frac{|x|-\xi}{\epsilon^2}}  &\text{for } |x|\geq \xi-\epsilon^2,\\
\sqrt{\mu(x)} &\text{for }  | x|\leq \xi -\epsilon^2, 
\end{cases}\nonumber
\end{equation} 
with $k_\ve$ defined by 
\[
k_\epsilon \epsilon e=\sqrt{\mu(\xi- \epsilon^2)}\Longrightarrow k_\epsilon=\mathcal O(1).
\] 
Since $\phi \in H^1(\R)$, we have $E(v)\leq E(\phi)$, and we can check that $E(\phi)+\int_{\mu>0}\frac{\mu^2}{4\epsilon}=\mathcal O(\epsilon \ln(\epsilon))$ (cf. \cite{kink} for more details).
By \eqref{bound3}, \eqref{bound4} and \eqref{bound5}, it follows that when $\alpha$ remains in a bounded interval
\begin{align}\label{renorm3b2}
\mathcal{E}(v)\leq\mathcal{E}(\phi)=&E(\phi)+\int_{\mu>0}\frac{\mu^2}{4\epsilon} +\int_{\mu>0}\alpha |f|(| \sigma|-\sqrt{\mu})   +\int_{\mu>0}\alpha| f| \sqrt{\mu} \nonumber \\
&-\int_{\mu>0}\frac{(\sigma^2-\mu)^2}{4\epsilon} +\int_{\mu<0}\Big( -\frac{\sigma^4}{4\epsilon}+\frac{\mu\sigma^2}{2\epsilon}+\alpha f\sigma\Big). \nonumber \\
&=\int_{\mu>0}\alpha| f| \sqrt{\mu}+ \mathcal O(\epsilon^{1/3})=\alpha K+\mathcal O(\epsilon^{1/3}).
\end{align}  
Next, we repeat the previous computation by considering another $C^1$ piecewise function:
\begin{equation}
\eta(x)=
\begin{cases}
-k_\epsilon \epsilon e^{\frac{x+\xi}{\epsilon^2}}  &\text{for } x\leq -\xi+\epsilon^2,\\
-\sqrt{\mu(x)} &\text{for } -\xi+\epsilon^2\leq x\leq -\zeta-\zeta_\epsilon \epsilon, \\
l_\epsilon \tanh\Big(\frac{x+\zeta}{\epsilon}\sqrt{\frac{\mu(\zeta)}{2}}\Big) &\text{for } |x+\zeta|\leq \zeta_\epsilon \epsilon, \\
\sqrt{\mu(x)} &\text{for } -\zeta+ \zeta_\epsilon\epsilon\leq x\leq  \xi-\epsilon^2, \\
k_\epsilon \epsilon e^{-\frac{x-\xi}{\epsilon^2}}  &\text{for } x\geq \xi-\epsilon^2,
\end{cases}\nonumber
\end{equation} 
with $$\zeta_\epsilon=- \ln \epsilon,$$ 
$$k_\epsilon \text{ as above},$$ 
$$l_\epsilon \tanh\Big(\zeta_\epsilon\sqrt{\frac{\mu(\zeta)}{2}}\Big)=\sqrt{\mu(\zeta+\zeta_\epsilon\epsilon)}\Longrightarrow \lim_{\epsilon \to 0}l_\epsilon=\sqrt{\mu(\zeta)}, \ \frac{l_\epsilon^2}{\mu(\zeta)}=1+O(\epsilon^\gamma), \text{ for some $0<\gamma<1$}.$$ 
Clearly, we have $E(v)\leq E(\eta)$, and one can check that 
\begin{equation}\label{renorm3b3}
E(\eta)+\int_{|x|<\xi}\frac{\mu^2}{4\epsilon}\to \frac{2}{3}(\sqrt{2}-\alpha_0)(\mu(\zeta)^{3/2}),\ 
\mathcal E(\eta) \to \frac{2}{3}(\sqrt{2}-\alpha_0)(\mu(\zeta)^{3/2})+\alpha_0 K,
\end{equation}
as $\epsilon \to 0$, and $\alpha\to \alpha_0$. Again, we refer to \cite{kink} for more details. The term $\frac{2}{3}\sqrt{2}(\mu(\zeta)^{3/2})$ is due to the transition induced by the hyperbolic tangent at the point $-\zeta$, while the term $-\frac{2}{3}\alpha_0(\mu(\zeta)^{3/2})$, is the contribution of the integral $-\int_{\mu>0}\alpha_0 f \eta$.
Finally, we construct the following $C^1$ piecewise odd function:
\begin{equation}
\psi(x)=
\begin{cases}
-k_\epsilon \epsilon e^{\frac{x+\xi}{\epsilon^2}}  &\text{for } x\leq -\xi+\epsilon^2,\\
-\sqrt{\mu(x)} &\text{for } -\xi+\epsilon^2 \leq x\leq -\zeta-\zeta_\epsilon\epsilon, \\
l_\epsilon \tanh\Big(\frac{x\pm\zeta}{\epsilon}\sqrt{\frac{\mu(\zeta)}{2}}\Big) &\text{for } |x\pm\zeta|\leq \zeta_\epsilon \epsilon, \\
\sqrt{\mu(x)} &\text{for } -\zeta+\zeta_\epsilon\epsilon\leq x\leq -\zeta_\epsilon\epsilon , \\
-l'_\epsilon \tanh\Big(\frac{x}{\epsilon}\sqrt{\frac{\mu(\zeta')}{2}}\Big) &\text{for } |x|\leq \zeta_\epsilon \epsilon, \\
-\sqrt{\mu(x)} &\text{for } \zeta_\epsilon\epsilon\leq x\leq \zeta-\zeta_\epsilon\epsilon,  \\
\sqrt{\mu(x)} &\text{for } \zeta+\zeta_\epsilon \epsilon \leq x\leq \xi-\epsilon^2, \\
k_\epsilon \epsilon e^{-\frac{x-\xi}{\epsilon^2}}  &\text{for } x\geq \xi-\epsilon^2,
\end{cases}\nonumber
\end{equation} 
with 
$$k_\epsilon, l_\epsilon\text{ and }  \zeta_\epsilon\text{ as above},$$ 
$$l'_\epsilon \tanh\Big(\zeta_\epsilon\sqrt{\frac{\mu(\zeta')}{2}}\Big)=\sqrt{\mu(\zeta'+\zeta_\epsilon\epsilon)}\Longrightarrow 
\lim_{\epsilon \to 0}l'_\epsilon=\sqrt{\mu(\zeta')}, \ \frac{{l'_\epsilon}^2}{\mu(\zeta')}=1+O(\epsilon^\gamma), \text{ for some $0<\gamma<1$}.$$ 
Since $\psi \in H^1_{\mathrm{odd}}(\R)$, we have $E(u)\leq E(\psi)$. Again, we can check that 
\begin{equation}\label{renorm3b4}
\mathcal E(\psi)\to \frac{2}{3}\big[2(\sqrt{2}-\alpha_0)(\mu(\zeta))^{3/2}+(\sqrt{2}+\alpha_0)(\mu(0))^{3/2}\big]+\alpha_0 K,
\end{equation}
as $\epsilon \to 0$, and $\alpha\to \alpha_0$ (cf. \cite{kink}).
By gathering \eqref{renorm3b2}, \eqref{renorm3b3} and \eqref{renorm3b4}, we establish \eqref{qua22}. To complete the proof of \eqref{quab122odd}, we need to construct another odd test function. We define a function $\chi$ equal to $\phi$ for $x\geq \zeta_\epsilon\epsilon$, and equal to $\psi$ for $0\leq x\leq \zeta_\epsilon\epsilon$. Then, we extend it by symmetry on all $\R$.
\end{proof}

\begin{proposition}\label{p5}
Under hypothesis (B), we have for the minimizers $v$ and $u$ constructed in Theorem \ref{th1} and \ref{th2}:
\begin{equation}\label{qua5}
\limsup_{\epsilon\to 0, \alpha\to \alpha_0}\mathcal{E}(v_{\epsilon,\alpha})\leq \limsup_{\epsilon\to 0, \alpha\to \alpha_0}\mathcal{E}(u_{\epsilon,\alpha})\leq \frac{4\min(\alpha_0,\sqrt{2})}{3}(\mu(\zeta))^{3/2}.
\end{equation}
\end{proposition}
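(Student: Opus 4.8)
The plan is to adapt the test-function construction of Proposition~\ref{p5aa} to hypothesis (B). First observe that the first inequality in \eqref{qua5} is free: since $H^1_{\mathrm{odd}}(\R)\subset H^1(\R)$ and $v$ minimizes $E$ over all of $H^1(\R)$, we have $E(v)\le E(u)$, and because the additive term in \eqref{renorm2} does not depend on the competitor, this gives $\mathcal E(v)\le\mathcal E(u)$. It remains to bound $\mathcal E(u)$ from above, which I would do by constructing two odd competitors and keeping, for each $\alpha_0$, the cheaper one. The relevant bookkeeping identity under (B) is that, since $\{\mu>0\}=(-\xi,-\xi')\cup(\xi',\xi)$, $f=-\tfrac12\mu'$, $\mu(\pm\xi)=\mu(\pm\xi')=0$ and $\mu(\pm\zeta)=\max\mu$, integrating the primitive $\tfrac23\mu^{3/2}$ of $\mu'\sqrt\mu$ over each bump yields $K=\int_{\mu>0}|f|\sqrt\mu=\tfrac43(\mu(\zeta))^{3/2}$.

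The first competitor $w_1\in H^1_{\mathrm{odd}}(\R)$ equals $-\sqrt\mu$ on the left bump $(-\xi,-\xi')$ and $+\sqrt\mu$ on the right bump $(\xi',\xi)$, is of size $O(\epsilon^{1/3})$ (tracking $\sigma$) on the central region $(-\xi',\xi')$ where $\mu<0$, and is glued to exponential tails beyond $\pm\xi$, exactly as $\phi$ and $\eta$ in Proposition~\ref{p5aa}. All its sign changes occur at the zeros $\pm\xi',\pm\xi$ of $\mu$, so there is no hyperbolic-tangent layer, only corner layers. Evaluating \eqref{renorm3} with the help of \eqref{bound1}--\eqref{bound5}, the contributions off $\{\mu>0\}$ and the corner/boundary layers are $o(1)$, while the forcing terms collapse to $\int_\R\alpha f(\sigma-w_1)$. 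On $\{\mu<0\}$ this is $o(1)$ because $w_1\approx\sigma$; on $\{\mu>0\}$, writing $\sigma\approx(\sgn f)\sqrt\mu$ and $w_1=s\sqrt\mu$ with $s=\pm1$ constant on each bump, it equals $\int_{\mu>0}\alpha|f|\sqrt\mu-\int_{\mu>0}\alpha s f\sqrt\mu=\alpha K-0$, the second integral vanishing by the telescoping identity since each bump carries a single sign and $\mu$ vanishes at both its endpoints. Hence $\mathcal E(w_1)\to\alpha_0K=\tfrac{4\alpha_0}{3}(\mu(\zeta))^{3/2}$.

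The second competitor $w_2\in H^1_{\mathrm{odd}}(\R)$ is built like $w_1$ near the origin and beyond $\pm\xi$, but flips sign inside each bump at the maxima $\pm\zeta$: on $(\xi',\zeta)$ it equals $-\sqrt\mu$ and on $(\zeta,\xi)$ it equals $+\sqrt\mu$, joined by an $l_\epsilon\tanh(\cdot)$ layer at $\zeta$ of amplitude $\to\sqrt{\mu(\zeta)}$, and symmetrically on the left, just as $\psi$ in Proposition~\ref{p5aa}. Each tanh layer sits where $\mu=\mu(\zeta)$ and contributes $\tfrac23\sqrt2(\mu(\zeta))^{3/2}$, so the two of them add $\tfrac43\sqrt2(\mu(\zeta))^{3/2}$. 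Now $s$ changes sign at $\zeta$ on each bump, and the telescoping gives $\int_{\mu>0}\alpha s f\sqrt\mu=\tfrac{4\alpha}{3}(\mu(\zeta))^{3/2}=\alpha K$, so that $\int_\R\alpha f(\sigma-w_2)\to\alpha_0K-\alpha_0K=0$. Thus $\mathcal E(w_2)\to\tfrac43\sqrt2(\mu(\zeta))^{3/2}$. Since $\mathcal E(u)\le\min\{\mathcal E(w_1),\mathcal E(w_2)\}$ and $\tfrac{4\alpha_0}{3}(\mu(\zeta))^{3/2}\le\tfrac{4\sqrt2}{3}(\mu(\zeta))^{3/2}$ exactly when $\alpha_0\le\sqrt2$, passing to the limit yields $\limsup\mathcal E(u)\le\tfrac43\min(\alpha_0,\sqrt2)(\mu(\zeta))^{3/2}$, which is \eqref{qua5}.

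I expect the genuine work to be in justifying that the sign changes passing through the simple zeros $\pm\xi'$ (and the gluings at $\pm\xi$) are true corner layers of cost $o(1)$, and not full $\tanh$ transitions: this needs the explicit $C^1$ interpolation and boundary-layer estimates of \cite{kink}, together with \eqref{bound1}--\eqref{bound5} to control the central region $\{\mu<0\}$, where the competitor must follow $\sigma$ closely lest it incur an $O(1)$ penalty. Once these layer estimates and the primitive identity $K=\tfrac43(\mu(\zeta))^{3/2}$ are in place, the remaining computations are elementary.
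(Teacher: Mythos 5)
Your proposal is correct and follows essentially the same route as the paper: the first inequality from $H^1_{\mathrm{odd}}(\R)\subset H^1(\R)$, then two odd competitors --- one changing sign only through the zeros $\pm\xi',\pm\xi$ of $\mu$ (cost $\alpha_0 K=\tfrac{4\alpha_0}{3}(\mu(\zeta))^{3/2}$) and one with $\tanh$ layers at $\pm\zeta$ (cost $\tfrac{4\sqrt{2}}{3}(\mu(\zeta))^{3/2}$) --- and taking the minimum. The only cosmetic difference is that in the central region $\{\mu<0\}$ the paper uses the linear interpolation $\kappa_\epsilon x$ with $\kappa_\epsilon=\mathcal O(\epsilon)$ rather than tracking $\sigma$, which changes nothing in the estimates.
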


\begin{proof}[Proof of Proposition~\ref{p5}]
We recall that $\xi'$ is the zero of $\mu$ in the interval $(0,\zeta)$, 
and consider the $C^1$ piecewise function:
\begin{equation}
\phi(x)=
\begin{cases}
-k_\epsilon \epsilon e^{\frac{x+\xi}{\epsilon^2}}  &\text{for } x\leq -\xi+\epsilon^2,\\
-\sqrt{\mu(x)} &\text{for } -\xi+\epsilon^2 \leq x\leq -\xi'-\epsilon^2 , \\
\kappa_\epsilon x &\text{for } |x|\leq \xi'+\epsilon^2, \\
\sqrt{\mu(x)} &\text{for } \xi'+\epsilon^2\leq x\leq \xi-\epsilon^2, \\
k_\epsilon \epsilon e^{-\frac{x-\xi}{\epsilon^2}}  &\text{for } x\geq \xi-\epsilon^2,
\end{cases}\nonumber
\end{equation} 
with $k_\ve$ and $\kappa_\epsilon$ defined by 
\[
k_\epsilon \epsilon e=\sqrt{\mu(\xi- \epsilon^2)}\Longrightarrow k_\epsilon=\mathcal O(1).
\] 
$$\kappa_\epsilon(\xi'+\epsilon^2)=\sqrt{\mu(\xi'+\epsilon^2)}\Longrightarrow \kappa_\epsilon=\mathcal O(\epsilon),$$  
Since $\phi \in H^1_{\mathrm{odd}}(\R)$, we have $E(v)\leq E(u)\leq E(\phi)$, and we can check that $E(\phi)+\int_{\mu>0}\frac{\mu^2}{4\epsilon}=\mathcal O(\epsilon \ln(\epsilon))$ (cf. \cite{kink} for more details).
By \eqref{bound3}, \eqref{bound4} and \eqref{bound5}, it follows that when $\alpha$ remains in a bounded interval
\begin{align}\label{renorm3b}
\mathcal{E}(v)\leq\mathcal{E}(u)\leq\mathcal{E}(\phi)=&E(\phi)+\int_{\mu>0}\frac{\mu^2}{4\epsilon} +\int_{\mu>0}\alpha |f|(| \sigma|-\sqrt{\mu})   +\int_{\mu>0}\alpha| f| \sqrt{\mu} \nonumber \\
&-\int_{\mu>0}\frac{(\sigma^2-\mu)^2}{4\epsilon} +\int_{\mu<0}\Big( -\frac{\sigma^4}{4\epsilon}+\frac{\mu\sigma^2}{2\epsilon}+\alpha f\sigma\Big). \nonumber \\
&=\int_{\mu>0}\alpha| f| \sqrt{\mu}+ \mathcal O(\epsilon^{1/3})=\frac{4\alpha}{3}(\mu(\zeta)^{3/2})+\mathcal O(\epsilon^{1/3}),
\end{align}  
since $f'=-\frac{\mu'}{2}$. Next, we repeat the previous computation by considering another $C^1$ piecewise function:

\begin{equation}
\psi(x)=
\begin{cases}
-k_\epsilon \epsilon e^{\frac{x+\xi}{\epsilon^2}}  &\text{for } x\leq -\xi+\epsilon^2,\\
-\sqrt{\mu(x)} &\text{for } -\xi+\epsilon^2 \leq x\leq -\zeta-\zeta_\epsilon\epsilon, \\
l_\epsilon \tanh\Big(\frac{x\pm\zeta}{\epsilon}\sqrt{\frac{\mu(\zeta)}{2}}\Big) &\text{for } |x\pm\zeta|\leq \zeta_\epsilon \epsilon, \\
\sqrt{\mu(x)} &\text{for } -\zeta+\zeta_\epsilon\epsilon\leq x\leq -\xi'-\epsilon^2 , \\
-\kappa_\epsilon x &\text{for } |x|\leq \xi'+\epsilon^2, \\
-\sqrt{\mu(x)} &\text{for } \xi'+\epsilon^2\leq x\leq \zeta-\zeta_\epsilon\epsilon,  \\
\sqrt{\mu(x)} &\text{for } \zeta+\zeta_\epsilon \epsilon \leq x\leq \xi-\epsilon^2, \\
k_\epsilon \epsilon e^{-\frac{x-\xi}{\epsilon^2}}  &\text{for } x\geq \xi-\epsilon^2,
\end{cases}\nonumber
\end{equation} 
with $$\zeta_\epsilon=- \ln \epsilon,$$ 
$$k_\epsilon, \kappa_\epsilon \text{ as above},$$ 
$$l_\epsilon \tanh\Big(\zeta_\epsilon\sqrt{\frac{\mu(\zeta)}{2}}\Big)=\sqrt{\mu(\zeta+\zeta_\epsilon\epsilon)}\Longrightarrow \lim_{\epsilon \to 0}l_\epsilon=\sqrt{\mu(\zeta)}, \ \frac{l_\epsilon^2}{\mu(\zeta)}=1+O(\epsilon^\gamma), \text{ for some $0<\gamma<1$}.$$ 
Since $\psi \in H^1_{\mathrm{odd}}(\R)$, we have $E(v)\leq E(u)\leq E(\psi)$. We can check that $$E(\psi)+\int_{|x|<\xi}\frac{\mu^2}{4\epsilon}\to\frac{4\sqrt{2}}{3}(\mu(\zeta))^{3/2}-\int_{\mu>0}\alpha_0 |f|\sqrt{\mu},$$ as $\epsilon \to 0$, and $\alpha\to \alpha_0$. Again, we refer to \cite{kink} for more details.
In view of \eqref{bound3}, \eqref{bound4} and \eqref{bound5}, we obtain as previously
\begin{equation}\label{quab1bis}
\limsup_{\epsilon\to 0, \alpha\to \alpha_0}\mathcal{E}(v_{\epsilon,\alpha})\leq \limsup_{\epsilon\to 0, \alpha\to \alpha_0}\mathcal{E}(u_{\epsilon,\alpha})\leq \frac{2N\sqrt{2}}{3}(\mu(\zeta))^{3/2}.
\end{equation}
\eqref{renorm3b} together with \eqref{quab1bis} above establish \eqref{qua5}. 
\end{proof}


\section{The structure of the kinks for the different regimes}\label{sec:structure}

\begin{theorem}\label{p10sh} 
Assume (A) and let $v_{\epsilon,\alpha}$ be a global minimizer. Then (up to the symmetry $\hat v(x)=-v(-x)$):
\begin{itemize}
\item[(i)] as $\epsilon \to 0$ and $\alpha \to \alpha_0 \in (0,\sqrt{2})$, $v_{\epsilon,\alpha}$ has a unique zero $\bar x_1(\epsilon,\alpha)$ converging to $-\xi$
\item[(ii)] as $\epsilon \to 0$ and $\alpha \to \alpha_0 \in (\sqrt{2},\alpha^{**})$ (cf. Proposition \ref{p5aa}), $v_{\epsilon,\alpha}$ has a unique zero $\bar x_1(\epsilon,\alpha)$ converging to $-\zeta$.
\item[(iii)] as $\epsilon \to 0$ and $\alpha \to \alpha_0 \in (\alpha^{**},\infty)$, $v_{\epsilon,\alpha}$ has three zeros $\bar x_1(\epsilon,\alpha) <\bar x_2(\epsilon,\alpha)<\bar x_3(\epsilon,\alpha)$ converging respectively to $-\zeta$, $0$ and $\zeta$.
\end{itemize}
Moreover, setting $ l_i=\lim_{\epsilon\to 0, \alpha\to \alpha_0} \bar x_i(\epsilon,\alpha)$, we have
\begin{equation}\label{sh11}
 \lim_{\epsilon\to 0, \alpha\to \alpha_0} v_{\epsilon,\alpha}(\bar x_i(\epsilon,\alpha)+s\epsilon)=\sqrt{\mu(l_i)}\tanh(s\sqrt{\mu(l_i)/2})
\end{equation}
for the $C^1_{\mathrm{ loc}}(\R)$ convergence, and in case (i):
\begin{equation}
\lim_{\epsilon\to 0, \alpha\to \alpha_0} v_{\epsilon,\alpha}(x_0+s\epsilon)=
\begin{cases}
\sqrt{\mu(x_0)}  &\text{for } |x_0|<\xi, \\
0 &\text{for } |x_0| \geq \xi,
\end{cases}
\end{equation} 
in case (ii):
\begin{equation}
\lim_{\epsilon\to 0, \alpha\to \alpha_0} v_{\epsilon,\alpha}(x_0+s\epsilon)=
\begin{cases}
-\sqrt{\mu(x_0)}  &\text{for } -\xi<x_0< -\zeta, \\
\sqrt{\mu(x_0)}  &\text{for } -\zeta<x_0<\xi, \\
0 &\text{for } |x_0| \geq \xi,
\end{cases}
\end{equation} 
in case (iii):
\begin{equation}
\lim_{\epsilon\to 0, \alpha\to \alpha_0} v_{\epsilon,\alpha}(x_0+s\epsilon)=
\begin{cases}
-\sqrt{\mu(x_0)}  &\text{for } x_0 \in (-\xi,-\zeta)\cup (0,\zeta), \\
\sqrt{\mu(x_0)}  &\text{for } x_0 \in (-\zeta,0)\cup(\zeta,\xi),  \\
0 &\text{for } |x_0| \geq \xi,
\end{cases}
\end{equation} for the $C^1_{\mathrm{ loc}}(\R)$ convergence.
\end{theorem}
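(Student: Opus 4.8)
The plan is to match the upper bounds of Proposition \ref{p5aa} with a $\Gamma$-convergence lower bound for the renormalized energy $\mathcal{E}$, and then to read off the minimizing profile by an explicit finite-dimensional minimization. Throughout I fix $\alpha_0$ in one of the three open intervals $(0,\sqrt2)$, $(\sqrt2,\alpha^{**})$, $(\alpha^{**},\infty)$ and argue along a sequence $\epsilon\to0$, $\alpha\to\alpha_0$. I first set up compactness. By Proposition \ref{p3}, for $\epsilon/\alpha$ small $v=v_{\epsilon,\alpha}$ has between one and three zeros, all in $[-\xi-\mathcal O(\sqrt{\epsilon/\alpha}),\,\xi+\mathcal O(\sqrt{\epsilon/\alpha})]$, with $v<0$ left of the first and $v>0$ right of the last. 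Since $\mathcal E(v)$ is bounded above by Proposition \ref{p5aa}, the term $\int_{\mu>0}(v^2-\mu)^2/4\epsilon$ in \eqref{renorm3} is $\mathcal O(1)$, forcing $v^2\to\mu$ in $L^1_{\mathrm{loc}}(\{\mu>0\})$; by \eqref{ode} and elliptic regularity $v\to\varepsilon(x)\sqrt{\max(\mu,0)}$ in $C^1_{\mathrm{loc}}$ away from the (at most three) zeros, with $\varepsilon(x)\in\{\pm1\}$ piecewise constant, while the term $\int_{\mu<0}v^2(v^2-2\mu)/4\epsilon$ in \eqref{renorm3} being bounded forces $v\to0$ on $\{\mu<0\}$. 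Up to a subsequence the zeros converge to points $l_1<\dots<l_m$ with $m\in\{1,3\}$ (the parity being fixed by the sign pattern at $\pm\infty$), and the limit $v_*=\varepsilon\sqrt{\max(\mu,0)}$ has its interior sign changes at the $l_i$ with $\mu(l_i)>0$.

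Next I would prove the lower bound. Localizing $\mathcal E$ near each interior sign change $l_i$ and using the Modica--Mortola inequality $\frac\epsilon2|v'|^2+\frac{(v^2-\mu)^2}{4\epsilon}\geq\frac1{\sqrt2}|\mu-v^2|\,|v'|$ together with $\mu\to\mu(l_i)$, each transition contributes at least $\int_{-\sqrt{\mu(l_i)}}^{\sqrt{\mu(l_i)}}\frac{\mu(l_i)-s^2}{\sqrt2}\,\dd s=\frac{2}{3}\sqrt2\,\mu(l_i)^{3/2}$; the one-sided layers at $\pm\xi$ cost $o(1)$ because $\mu(\pm\xi)=0$. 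The forcing term passes to the limit, $-\int_\R\alpha f v\to-\int_{\mu>0}\alpha_0 f v_*$, and by \eqref{bound3}, \eqref{bound4}, \eqref{bound5} the $\sigma$-correction in \eqref{renorm3} contributes $\alpha_0 K$. Hence
$$
\liminf \mathcal E(v_{\epsilon,\alpha})\ \geq\ \sum_{i}\tfrac{2}{3}\sqrt2\,\mu(l_i)^{3/2}-\int_{\mu>0}\alpha_0 f v_*+\alpha_0 K\ =:\ \mathcal E_0(v_*),
$$
which is precisely the limiting functional whose values on the candidate profiles $A$ (one zero at $-\xi$, $v_*\approx\sqrt\mu$ inside), $B$ (one zero at $-\zeta$), $C$ (three zeros at $-\zeta,0,\zeta$) equal the three right-hand sides of \eqref{qua22}.

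Finally I would match and minimize. As $v$ is a minimizer, Proposition \ref{p5aa} gives $\mathcal E(v)\leq\min_{A,B,C}\mathcal E_0+o(1)$, while the lower bound gives $\liminf\mathcal E(v)\geq\mathcal E_0(v_*)$; hence $v_*$ minimizes $\mathcal E_0$ among admissible profiles. Using $\int_a^b f\sqrt\mu=-\frac13[\mu(b)^{3/2}-\mu(a)^{3/2}]$ one evaluates $\mathcal E_0$ on a single interior transition at $l$ as $\frac23(\sqrt2-\alpha_0)\mu(l)^{3/2}+\alpha_0 K$, monotone in $\mu(l)$ and hence minimized at $\mu(l)=0$ (i.e. $l\to\pm\xi$) when $\alpha_0<\sqrt2$ and at $l=\pm\zeta$ when $\alpha_0>\sqrt2$, and on $C$ as $\frac23\sqrt2[2\mu(\zeta)^{3/2}+\mu(0)^{3/2}]$; comparing $A,B,C$ produces exactly the thresholds $\sqrt2$ and $\alpha^{**}=\sqrt2\frac{\mu(\zeta)^{3/2}+\mu(0)^{3/2}}{\mu(\zeta)^{3/2}-\mu(0)^{3/2}}$, with a unique minimizer (up to $\hat v(x)=-v(-x)$) in each open interval. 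This fixes the number of zeros and their limits $l_i$, giving (i)--(iii). The local profiles \eqref{sh11} then follow by blow-up at each $\bar x_i$: the rescaling $w(s)=v(\bar x_i+s\epsilon)$ solves $w''+\mu(\bar x_i+s\epsilon)w-w^3+\epsilon\alpha f=0$, converging in $C^1_{\mathrm{loc}}$ to $w''+\mu(l_i)w-w^3=0$; sharpness of the lower bound forces $w$ to be the heteroclinic $\sqrt{\mu(l_i)}\tanh(s\sqrt{\mu(l_i)/2})$ rather than a constant, and the off-transition limits are the already identified $\pm\sqrt{\max(\mu,0)}$.

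The main obstacle is the sharp $\Gamma$-liminf lower bound: one must show that no energy is lost, i.e. that the transitions are genuinely $\mathcal O(\epsilon)$-thin and oscillation-free so that each captures the full heteroclinic cost $\frac23\sqrt2\,\mu(l_i)^{3/2}$, and that the Modica--Mortola estimate is location-sensitive enough to pin the $l_i$ to the exact critical points $-\zeta,0,\zeta$ while ruling out any spurious transition beyond the three allowed by Proposition \ref{p3}. Everything else is either inherited from Proposition \ref{p5aa} or a routine blow-up.
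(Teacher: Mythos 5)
Your proposal follows essentially the same strategy as the paper: match the upper bounds of Proposition \ref{p5aa} against a lower bound for $\mathcal{E}$ obtained by localizing at the transitions, minimize the resulting finite-dimensional function of the limit positions $l_i$ to produce the thresholds $\sqrt{2}$ and $\alpha^{**}$, and identify the local profiles by blow-up. The one substantive difference is the mechanism for the lower bound: you invoke Modica--Mortola, whereas the paper first establishes (via Ascoli and minimality under compactly supported perturbations) that the blow-up limit at each zero is the full heteroclinic $\pm\sqrt{\mu(l_i)}\tanh(s\sqrt{\mu(l_i)/2})$, and then applies Fatou's lemma on expanding windows $[\bar x_i(n)-\lambda_n\epsilon_n,\bar x_i(n)+\lambda_n\epsilon_n]$ to capture the cost $\frac{2\sqrt2}{3}\mu(l_i)^{3/2}$. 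This is precisely how the paper disposes of what you flag as ``the main obstacle'': the sharpness of the liminf does not need a separate oscillation-free argument, because the blow-up limit is forced to be the heteroclinic by $\tilde V(0)=0$ together with minimality, and Fatou then gives the full transition cost with no energy loss.

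There is one genuine gap in your compactness step. You assert that the zeros converge to $l_1<\dots<l_m$ with $m\in\{1,3\}$, ``the parity being fixed by the sign pattern at $\pm\infty$.'' But Proposition \ref{p3} only bounds the number of zeros by three; a zero need not be a sign change (it can be a tangency where $v$ touches $0$ at a local extremum), so $v$ can have exactly two zeros, and distinct zeros can collapse to the same limit point. These are the paper's cases b) and d): for two zeros with $l_1<l_2$ one shows by rescaling at the tangency that the limit would be a one-signed minimal solution of \eqref{odeglb} vanishing at a point, impossible when $\mu(l_1)>0$; and for coinciding limits $l_1=l_2<l_3$ the two collapsing transitions still contribute $\frac{4\sqrt2}{3}\mu(l_1)^{3/2}$, which is then excluded by comparison with the upper bounds. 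Your strict ordering $l_1<\dots<l_m$ and the restriction $m\in\{1,3\}$ therefore need to be proved, not assumed; without ruling out these degenerate configurations the finite-dimensional minimization over the profiles $A$, $B$, $C$ does not exhaust all competitors. The explicit evaluations of the limiting functional and the resulting thresholds in your last step are correct.
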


\begin{proof}[Proof of Theorem~\ref{p10sh}]
Let us consider a sequence $\epsilon_n\to 0$, and a sequence $\alpha_n \to \alpha_0$, and let us denote by $v_n$, a global minimizer $v_{\epsilon_n,\alpha_n}$ (up to the symmetry $\hat v(x)=-v(-x)$). 
We know by Proposition \ref{p3}, that the set of limit points of the zeros of $v_n$, denoted by $\bar x_i(n)$, is included in $[-\xi,\xi]$. 
Setting $ l_i=\lim_{n \to \infty} \bar x_i(n)$, we will compute a lower bound of $\liminf_{n\to\infty}\mathcal{E}(v_n)$ for all possible values of
$-\xi\leq l_1\leq l_2\leq l_3\leq\xi$. Then, by comparing this lower bound with the upper bounds given in Proposition \ref{p5aa}, we will deduce the theorem.
We rescale $v_n$, by setting $\tilde v_n(s)=v_n(\bar x_i(n)+s\epsilon_n)$, for some $i=1,2,3$. Next, we also compute the rescaled energy
\begin{equation}
\label{functres}
\tilde E(\tilde v)=\int_{\R}\frac{1}{2}|\tilde v'(s)|^2-\frac{1}{2}\mu(\bar x_i(n)+s\epsilon_n)\tilde v^2(s)+\frac{1}{4}|\tilde v|^4(s)-\epsilon_n\alpha_n f(\bar x_i(n)+s\epsilon_n)\tilde v(s)\dd s=E(v).
\end{equation}

In view of Proposition \ref{p0}, the sequence $\tilde v_n$ is uniformly bounded, and since these functions satisfy the O.D.E.:
\begin{equation}\label{oderes}
\tilde v''_n(s)+\mu(\bar x_i(n)+s\epsilon_n) \tilde v_n(s)-\tilde v_n^3(s)+\epsilon_n \alpha_n f(\bar x_i(n)+s\epsilon_n)=0,\qquad  \forall s\in \R,
\end{equation}
we see that the sequence $\tilde v_n$ is uniformly bounded, up to the second derivatives.
Thus, we can apply the theorem of Ascoli, via a diagonal argument, and show that for a subsequence, still called $\tilde v_n$,
$\tilde v_n$ converges in $C^1_{\mathrm{ loc}}(\R)$ to a function $\tilde V$.
Now, we are going to determine $\tilde V$. Let $\tilde \xi$ be a test function with support in the compact interval $J$. 
We have $\tilde E_{\epsilon_n,\alpha_n}(\tilde v_n+\tilde \xi,J)\geq \tilde E_{\epsilon_n,\alpha_n}(\tilde v_n,J)$, and at the limit
$\tilde E_{1}( \tilde V+\tilde\xi,J)\geq \tilde E_{1}(\tilde V,J)$, where $$\tilde E_{1}(\tilde v,J)=\int_{J}\frac{1}{2}|\tilde v'(s)|^2-\frac{1}{2}\mu(l_i)\tilde v^2(s)+\frac{1}{4}|\tilde v|^4(s)\dd s,$$
or equivalently $\tilde E_0( \tilde V+\tilde\xi,J)\geq \tilde E_{0}(\tilde V,J)$, where
\begin{multline}\label{gl}
\tilde E_{0}(\tilde v,J)=\int_{J}\frac{1}{2}|\tilde v'(s)|^2-\frac{1}{2}\mu(l_i)\tilde v^2(s)+\frac{1}{4}|\tilde v|^4(s)+\frac{(\mu(l_i))^2}{4}\dd s
=\int_{J}\frac{1}{2}|\tilde v'(s)|^2+\frac{1}{4}(\tilde v^2(s)-\mu(l_i))^2\dd s.
\end{multline}
It follows that $\tilde V$ is a solution of the O.D.E.:
\begin{equation}\label{odeglb}
\tilde V''(s)-(\tilde V^2(s)-\mu(l_i))\tilde V(s)=0,
\end{equation}
which is minimal with respect to compactly supported perturbations. Since $\tilde V(0)=0$, we deduce that $\tilde V(s)= \pm\sqrt{\mu(l_i)}\tanh(s\sqrt{\mu(l_i)/2})$,
is the heteroclinic connection. At this stage, we distinguish the following cases.

a) Suppose first that for every $n$, $v_n$ has exactly one zero $\bar x_1(n)$ converging to $l_1$. Our claim is that 
\begin{equation}\label{claim12}
\lim_{n \to\infty} \mathcal{E}(v_n)\geq \frac{2}{3}(\sqrt{2}-\alpha_0)(\mu(l_1))^{3/2}+\alpha_0 K,
\end{equation}
where $K$ is as in Proposition \ref{p5aa}. We notice that if we rescale the sequence $v_n$ with respect to a point $x_0 \neq l_1$, by repeating the previous arguments, we will obtain the convergence of the sequence $\tilde v_n(s)=v_n( x_0+s\epsilon_n)$, to a minimal solution of the O.D.E. $\tilde U''(s)-(\tilde U^2(s)-\mu(x_0))\tilde U(s)=0$, with constant sign, that is, to the constant $\pm \sqrt{\max(\mu(x_0),0)}$. More precisely, we have
\begin{equation}\label{cvdsh}
\lim_{n\to\infty} v_n(x_0+s\epsilon)=
\begin{cases}
-\sqrt{\mu(x_0)}  &\text{for } -\xi<x_0< l_1, \\
\sqrt{\mu(x_0)}  &\text{for } l_1<x_0<\xi, \\
0 &\text{for } |x_0| \geq \xi,
\end{cases}
\end{equation} for the $C^1_{\mathrm{ loc}}(\R)$ convergence, and now we can estimate each integral appearing in \eqref{renorm3}. 
In view of \eqref{cvdsh}, Proposition \ref{p0} (iii) and \eqref{bound1}, we have by dominated convergence
\begin{equation}
\lim_{n\to\infty} \Big(-\int_\R\alpha_n f v_n +\int_{\mu>0}\alpha_n f \sigma\Big)= -\frac{2}{3}\alpha_0(\mu(l_1))^{3/2}+\alpha_0 K. \nonumber
\end{equation}  
On the other hand, it is clear that
\begin{equation}
0\leq\int_{\mu>0}\frac{v_n^2(v_n^2-2\mu)}{4\epsilon}.\nonumber
\end{equation}  
Finally, if $\lambda_n>0$ is such that $\lim_{n\to\infty}\lambda_n=\infty$, we obtain by Fatou's Lemma:
\begin{align}
\liminf_{n\to\infty} \int_{\bar x_1(n)-\lambda_n\epsilon_n}^{\bar x_1(n)+\lambda_n\epsilon_n}\Big(\frac{\epsilon_n}{2}|v'_n|^2+\frac{(v_n^2-\mu)^2}{4\epsilon_n}\Big) 
&=\liminf_{n\to\infty} \int_{-\lambda_n}^{\lambda_n}\Big(\frac{1}{2}|\tilde v'_n(s)|^2+\frac{(\tilde v^2_n(s)-\mu(\bar x_1(n)+s\epsilon_n))^2}{4}\Big)\dd s,  \nonumber\\
&\geq \int_{\R}\Big(\frac{1}{2}|\tilde V'(s)|^2+\frac{(\tilde V^2_n(s)-\mu(l_1))^2}{4}\Big)\dd s,  \nonumber\\ 
&=\frac{(\mu(l_1))^2}{2}\int_{\R}\frac{\dd s}{\cosh^4\big(s \sqrt{\frac{\mu(l_1)}{2}}\big)}=\frac{2\sqrt{2}}{3}(\mu(l_1))^{3/2},\nonumber
\end{align} 
thus, in view of \eqref{bound4} and \eqref{bound5}, we deduce \eqref{claim12}.

b) Suppose now that for every $n$, $v_n$ has exactly two zeros $\bar x_1(n)<\bar x_2(n)$ converging respectively to $l_1\leq l_2$. 
In view of Proposition \ref{p3} (ii), the one of these two zeros is a local extremum. Suppose without loss of generality that for instance $\bar x_1(n)$ is a local maximum. By remark 1) at the beginning of the proof of Proposition \ref{p3} and \eqref{ode}, it follows that $\bar x_1(n)\in [-\zeta,0]$. Now, if $l_1<l_2$, by rescaling $v_n$ with respect to $\bar x_1(n)$. as we did before, we obtain at the limit a minmal solution $\tilde V$ of \eqref{odeglb}, that is nonpositive and vanishes at $s=0$. Clearly, this is impossible since $\mu(l_1)>0$. On the other hand, case b) may only occur if  $l_1=l_2=0$, and gives the lower bound:
\begin{equation}\label{claim13}
\lim_{n \to\infty} \mathcal{E}(v_n)\geq \frac{2}{3}(\sqrt{2}-\alpha_0)(\mu(0)^{3/2}+\alpha_0 K,
\end{equation}

c) Next, if for every $n$, $v_n$ has exactly three zeros $\bar x_1(n)<\bar x_2(n)<\bar x_3(n)$ converging respectively to $l_1< l_2<l_3$, we can establish as in case a) the lower bound:
\begin{equation}\label{claim14}
\lim_{n \to\infty} \mathcal{E}(v_n)\geq \frac{2}{3}(\sqrt{2}-\alpha_0)(\mu(l_1))^{3/2}+\frac{2}{3}(\sqrt{2}+\alpha_0)(\mu(l_2))^{3/2}+\frac{2}{3}(\sqrt{2}-\alpha_0)(\mu(l_3))^{3/2}+\alpha_0 K.
\end{equation}

d) Finally, we still assume that for every $n$, $v_n$ has three zeros $\bar x_1(n)<\bar x_2(n)<\bar x_3(n)$, but now we allow large inequalities for the corresponding limits: $l_1\leq l_2\leq l_3$. If, $l_1=l_2=l_3$, we obtain again the bound \eqref{claim13},
since this case may only occur for $l_1=l_2=l_3=0$. Similarly, if for instance $l_1=l_2<l_3$, then in view of remark 1) in the proof of Proposition \ref{p3}, we have $|l_1|\leq \zeta$. In this case we obtain the bound 
\begin{equation}\label{claim15}
\lim_{n \to\infty} \mathcal{E}(v_n)\geq \frac{4\sqrt{2}}{3}(\mu(l_1))^{3/2}+\frac{2}{3}(\sqrt{2}-\alpha_0)(\mu(l_2))^{3/2}+\alpha_0 K,
\end{equation}
where the term $\frac{4\sqrt{2}}{3}(\mu(l_1))^{3/2}$ is due to the two transitions at $\bar x_1(n)$ and $\bar x_2(n)$.

To conclude, we are going to see in which of the cases above, the bounds in Proposition \ref{p5aa} are satisfied. Clearly, when $0<\alpha_0<\sqrt{2}$, only the case of global minimizers $v_{\epsilon, \alpha}$ with a unique zero converging to $-\xi$ may occur.
When, $\alpha_0>\sqrt{2}$, we consider the function
$G(l_1,l_2,l_3)=(\sqrt{2}-\alpha_0)(\mu(l_1))^{3/2}+(\sqrt{2}+\alpha_0)(\mu(l_2))^{3/2}+(\sqrt{2}-\alpha_0)(\mu(l_3))^{3/2}$
defined in the set $S=\{(l_1,l_2,l_3): -\xi\leq l_1< l_2< l_3\leq\xi\}$. We notice that 
\begin{equation}
\min_{\partial S}G=(\sqrt{2}-\alpha_0)
\mu(\zeta))^{3/2}
\begin{cases}
<G(l_1,l_2,l_3), \forall (l_1,l_2,l_3)\in S  &\text{ when } \sqrt{2}<\alpha_0<\alpha^{**}, \\
>\min_S G=G(-\zeta,0,\zeta) &\text{ when } \alpha_0>\alpha^{**}.
\end{cases}
\end{equation} 
Thus, when $  \sqrt{2}<\alpha_0<\alpha^{**}$, only the case of global minimizers $v_{\epsilon, \alpha}$ with a unique zero converging to $-\zeta$ may occur. Finally, when $\alpha_0>\alpha^{**}$, since the point $(-\zeta,0,\zeta)$ is the unique global minimum of $G$ in $\overline S$, it follows that the global minimizers $v_{\epsilon, \alpha}$ have exactly three zeros converging respectively to $-\zeta$, $0$ and $\zeta$.
\end{proof}
Similarly, one can prove the following theorems that apply in case (A) to the odd minimizer, and in case (B) to the global and the odd minimizers:
\begin{theorem}\label{p10shbis} 
Assume (A) and let $u_{\epsilon,\alpha}$ be an odd minimizer. Then,
\begin{itemize}
\item[(i)] as $\epsilon \to 0$ and $\alpha \to \alpha_0 \in (0,\alpha_{\mathrm{odd}})$, $u_{\epsilon,\alpha}$ has a unique zero located at the origin.
\item[(ii)] as $\epsilon \to 0$ and $\alpha \to \alpha_0 \in (\alpha_{\mathrm{odd}},\infty)$, $v_{\epsilon,\alpha}$ has two zeros converging to $\pm\zeta$, in addition to the one located at the origin.
\end{itemize}
\end{theorem}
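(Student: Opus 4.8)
The plan is to run, \emph{mutatis mutandis}, the scheme used to prove Theorem~\ref{p10sh}, with the oddness of $u$ drastically cutting down the admissible configurations of zeros. I would fix sequences $\epsilon_n\to 0$, $\alpha_n\to\alpha_0$, write $u_n=u_{\epsilon_n,\alpha_n}$, and first observe that $u_n(0)=0$ for all $n$, while any zero $a_n>0$ of $u_n$ forces the symmetric zero $-a_n$. Together with the bound of at most three zeros from Proposition~\ref{p3}(i), this leaves exactly two possible patterns: either $u_n$ vanishes only at the origin, or it has three zeros $-a_n<0<a_n$. As in Theorem~\ref{p10sh}, rescaling $u_n$ at each zero by $\tilde u_n(s)=u_n(\bar x_i(n)+s\epsilon_n)$ and using Proposition~\ref{p0} with Ascoli's theorem yields $C^1_{\mathrm{loc}}$ convergence to the heteroclinic $\pm\sqrt{\mu(l_i)}\tanh(s\sqrt{\mu(l_i)/2})$, where $l_i$ is the limit of the relevant zero.

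Next I would compute $\liminf_n\mathcal{E}(u_n)$ in each pattern, by specializing cases a) and c) in the proof of Theorem~\ref{p10sh} to the odd setting, and then compare with the upper bounds \eqref{quab122odd}. If $u_n$ has a single zero, it sits at $l_1=0$, and case a) gives
\[
\liminf_{n\to\infty}\mathcal{E}(u_n)\geq \tfrac{2}{3}(\sqrt{2}-\alpha_0)(\mu(0))^{3/2}+\alpha_0 K,
\]
with $\mu(0)>0$ under hypothesis (A). If $u_n$ has three zeros with $a_n\to l$, oddness forces $l_1=-l$, $l_2=0$, $l_3=l$; since $\mu$ is even, case c) specializes to
\[
\liminf_{n\to\infty}\mathcal{E}(u_n)\geq \tfrac{4}{3}(\sqrt{2}-\alpha_0)(\mu(l))^{3/2}+\tfrac{2}{3}(\sqrt{2}+\alpha_0)(\mu(0))^{3/2}+\alpha_0 K,
\]
the coefficient $(\sqrt2+\alpha_0)$ of the central transition at $l_2=0$ being exactly the one produced by the global forcing integral $-\int\alpha_0 f u_\infty+\int_{\mu>0}\alpha_0 f\sigma$ evaluated on the odd profile.

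To conclude, I would minimize the three-zero bound over $l\in(0,\xi]$. For $\alpha_0>\sqrt{2}$ the coefficient $(\sqrt2-\alpha_0)$ is negative, so $l\mapsto\tfrac{4}{3}(\sqrt2-\alpha_0)(\mu(l))^{3/2}$ is minimized where $\mu$ is largest, i.e.\ at $l=\zeta$, yielding precisely the upper bound in \eqref{quab122odd} valid for $\alpha_0\geq\alpha_{\mathrm{odd}}$. A direct comparison of the single-zero value with this minimized three-zero value shows that the two bounds cross exactly at $\alpha_0=\alpha_{\mathrm{odd}}=\frac{\sqrt{2}(\mu(\zeta))^{3/2}}{(\mu(\zeta))^{3/2}-(\mu(0))^{3/2}}$; hence below $\alpha_{\mathrm{odd}}$ the single-zero configuration is energetically selected, and above it the three-zero configuration with outer zeros converging to $\pm\zeta$. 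The $C^1_{\mathrm{loc}}$ profiles then follow from the heteroclinic identification at the surviving zeros together with convergence to $\pm\sqrt{\max(\mu,0)}$ in between, exactly as in Theorem~\ref{p10sh}.

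The point I expect to require the most care is the treatment of the degenerate limits, namely ruling out that the three zeros collapse ($a_n\to 0$, i.e.\ $l=0$) or that an outer zero drifts to $\pm\xi$. The collapse must be handled as in case d) in the proof of Theorem~\ref{p10sh}: when $l_1=l_2=l_3=0$ the three-zero estimate degenerates to the single-transition bound $\tfrac{2}{3}(\sqrt2-\alpha_0)(\mu(0))^{3/2}+\alpha_0 K$, so it can never beat the genuine $l=\zeta$ profile once $\alpha_0>\alpha_{\mathrm{odd}}$, while the drift to $\pm\xi$ gives $(\mu(l))^{3/2}\to 0$ and is strictly worse than the single zero for every $\alpha_0>0$. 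Thus the only genuinely computational step is the one-variable minimization over $l$ together with the crossover identification, both of which reduce to the sign analysis of $(\sqrt2-\alpha_0)$ used above.
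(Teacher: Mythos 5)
Your overall scheme is the one the paper intends (it gives no separate proof of Theorem~\ref{p10shbis}, deferring to the argument of Theorem~\ref{p10sh}), and the quantitative core is right: oddness reduces the admissible zero sets to $\{0\}$ or $\{-a_n,0,a_n\}$, your specializations of the lower bounds of cases a) and c) are correct, and the crossover of the single-zero value $\frac23(\sqrt2-\alpha_0)(\mu(0))^{3/2}+\alpha_0K$ with the minimized three-zero value $\frac43(\sqrt2-\alpha_0)(\mu(\zeta))^{3/2}+\frac23(\sqrt2+\alpha_0)(\mu(0))^{3/2}+\alpha_0K$ does occur exactly at $\alpha_{\mathrm{odd}}$, which settles regime (ii) and rules out genuine three-zero configurations with $l>0$ in regime (i).

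There is, however, one gap in regime (i): you do not exclude the configuration of three zeros $-a_n<0<a_n$ with $a_n\to 0$. Your treatment of the collapse invokes the ``case d)'' bound $\frac23(\sqrt2-\alpha_0)(\mu(0))^{3/2}+\alpha_0 K$ and only observes that it cannot beat the $l=\zeta$ profile when $\alpha_0>\alpha_{\mathrm{odd}}$; but for $\alpha_0<\alpha_{\mathrm{odd}}$ that crude bound \emph{coincides} with the upper bound \eqref{quab122odd}, so the energy comparison as you state it does not distinguish a unique zero at the origin from three zeros collapsing onto it --- and statement (i) asserts precisely that the zero is unique. (This issue does not arise for the global minimizer in Theorem~\ref{p10sh}, where the collapse bound always strictly exceeds the relevant upper bound, which is why the paper's \eqref{claim13} suffices there.) To close it you need the sharper count of transitions, in the spirit of the paper's \eqref{claim15}: if $a_n/\epsilon_n$ stays bounded, rescaling at the origin produces a minimal entire solution of $\tilde V''=(\tilde V^2-\mu(0))\tilde V$ with at least three zeros (or a degenerate zero), contradicting the classification of such solutions as constants or a single heteroclinic; if $a_n/\epsilon_n\to\infty$, Fatou's lemma applied to three disjoint windows of width $\lambda_n\epsilon_n$ around $-a_n$, $0$, $a_n$ yields a kink contribution of at least $3\cdot\frac{2\sqrt2}{3}(\mu(0))^{3/2}$, and the resulting lower bound $2\sqrt2(\mu(0))^{3/2}-\frac{2\alpha_0}{3}(\mu(0))^{3/2}+\alpha_0K$ strictly exceeds the upper bound since $\mu(0)>0$ under hypothesis (A). With that addition the proof is complete.
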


\begin{theorem}\label{p10shter} 
Assume (B), and let $v_{\epsilon,\alpha}$ and $u_{\epsilon,\alpha}$ be respectively a global, and an odd minimizer. Then,
as $\epsilon \to 0$ and $\alpha \to \alpha_0>0$, $v_{\epsilon,\alpha}$ and $u_{\epsilon,\alpha}$ have three zeros $\bar x_1(\epsilon,\alpha)<\bar x_2(\epsilon,\alpha)<\bar x_3(\epsilon,\alpha)$ such that
\begin{itemize}
\item[(i)] as $\epsilon \to 0$ and $\alpha \to \alpha_0 \in (0,\sqrt{2})$, $\bar x_1(\epsilon,\alpha) \to -\xi \text{ or } -\xi'$, $\bar x_2(\epsilon,\alpha)\to 0$, and $\bar x_3(\epsilon,\alpha)\to \xi' \text{ or } \xi$. 
\item[(ii)] as $\epsilon \to 0$ and $\alpha \to \alpha_0 \in (\sqrt{2},\infty)$, $\bar x_1(\epsilon,\alpha) \to -\zeta$, $\bar x_2(\epsilon,\alpha)\to 0$, and $\bar x_3(\epsilon,\alpha)\to \zeta$. 
\end{itemize}
\end{theorem}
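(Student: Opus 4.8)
The plan is to mimic the proof of Theorem~\ref{p10sh}: extract $C^1_{\mathrm{loc}}$-limits of the rescaled minimizers, bound the renormalized energy from below in terms of the limiting locations of the zeros, and then compare this lower bound with the upper bound of Proposition~\ref{p5}. Fix sequences $\epsilon_n\to 0$, $\alpha_n\to\alpha_0>0$, and write $v_n$ (resp.\ $u_n$) for the corresponding minimizer. Since (B) holds, Proposition~\ref{p3ter} guarantees that for $n$ large, $v_n$ and $u_n$ each have exactly three zeros $\bar x_1(n)<\bar x_2(n)<\bar x_3(n)$, at which the minimizer changes sign, and whose limit points satisfy $l_1:=\lim\bar x_1(n)\in[-\xi,-\xi']$, $l_2:=\lim\bar x_2(n)=0$, and $l_3:=\lim\bar x_3(n)\in[\xi',\xi]$. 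This is precisely what makes case (B) simpler than case (A): because $\mu$ changes sign between the bumps, the three zeros are automatically trapped in the three distinct regions $\{\mu>0\}$, $\{\mu<0\}$, $\{\mu>0\}$, so the delicate analysis of colliding zeros carried out in cases b)--d) of the proof of Theorem~\ref{p10sh} is entirely avoided.

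Next I would rescale around each point exactly as in Theorem~\ref{p10sh}: the functions $\tilde v_n(s)=v_n(x_0+s\epsilon_n)$ converge to a minimal solution of $\tilde V''-(\tilde V^2-\mu(x_0))\tilde V=0$. Around $x_0=l_i$ with $\tilde V(0)=0$ this is the heteroclinic $\sqrt{\mu(l_i)}\tanh(s\sqrt{\mu(l_i)/2})$ when $\mu(l_i)>0$, and the trivial solution $\tilde V\equiv 0$ when $\mu(l_i)\le 0$ (in particular at the middle zero $l_2=0$, since $\mu(0)<0$); around any other $x_0$ it is the constant $\pm\sqrt{\max(\mu(x_0),0)}$, with the sign dictated by the sign pattern of the minimizer. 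This identifies the global limit profile $v_n\to V_\infty$: namely $V_\infty=\pm\sqrt{\mu}$ on $\{\mu>0\}$ with signs $-,+$ on $(-\xi,l_1),(l_1,-\xi')$ and $-,+$ on $(\xi',l_3),(l_3,\xi)$, while $V_\infty=0$ on $\{\mu\le 0\}$.

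I would then bound $\liminf_n\mathcal{E}(v_n)$ from below using the expression \eqref{renorm3}. Fatou's lemma on shrinking windows around $\bar x_1(n)$ and $\bar x_3(n)$ contributes the transition energies $\tfrac{2\sqrt2}{3}(\mu(l_1))^{3/2}$ and $\tfrac{2\sqrt2}{3}(\mu(l_3))^{3/2}$, whereas the middle crossing contributes nothing, since $\mu(0)<0$ forces $\tilde V\equiv 0$ there and $\int_{\mu<0}\tfrac{v_n^2(v_n^2-2\mu)}{4\epsilon}\ge 0$. For the forcing term, the uniform bound of Proposition~\ref{p0}(iii) together with \eqref{bound1} allow passage to the limit by dominated convergence, giving $-\int_\R\alpha_n f v_n+\int_{\mu>0}\alpha_n f\sigma\to -\tfrac{2}{3}\alpha_0\big((\mu(l_1))^{3/2}+(\mu(l_3))^{3/2}\big)+\alpha_0 K$, where I use $f=-\mu'/2$, $\mu(\pm\xi)=\mu(\pm\xi')=0$, and $K=\int_{\mu>0}|f|\sqrt\mu=\tfrac43(\mu(\zeta))^{3/2}$. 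Combining this with \eqref{bound4} and \eqref{bound5} yields
\begin{equation*}
\liminf_{n\to\infty}\mathcal{E}(v_n)\ge \tfrac{2}{3}(\sqrt2-\alpha_0)\big((\mu(l_1))^{3/2}+(\mu(l_3))^{3/2}\big)+\alpha_0 K,
\end{equation*}
and the identical bound holds for $u_n$.

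Finally I would compare with the upper bound $\tfrac{4\min(\alpha_0,\sqrt2)}{3}(\mu(\zeta))^{3/2}$ of Proposition~\ref{p5}. When $0<\alpha_0<\sqrt2$ this upper bound equals exactly $\alpha_0 K$, so the lower bound forces $(\sqrt2-\alpha_0)\big((\mu(l_1))^{3/2}+(\mu(l_3))^{3/2}\big)\le 0$; since $\sqrt2-\alpha_0>0$ this gives $\mu(l_1)=\mu(l_3)=0$, i.e.\ $l_1\in\{-\xi,-\xi'\}$ and $l_3\in\{\xi',\xi\}$, which is (i). When $\alpha_0>\sqrt2$, dividing by the now-negative factor $\sqrt2-\alpha_0$ turns the comparison into $(\mu(l_1))^{3/2}+(\mu(l_3))^{3/2}\ge 2(\mu(\zeta))^{3/2}$; since $\mu(\zeta)=\max\mu$ the reverse inequality is automatic, forcing $\mu(l_1)=\mu(l_3)=\mu(\zeta)$, i.e.\ $l_1=-\zeta$ and $l_3=\zeta$, which is (ii). The same comparison applies verbatim to $u_n$. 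The main obstacle is the bookkeeping of the renormalized-energy lower bound, specifically checking that the middle crossing through $\{\mu<0\}$ costs no $\mu^{3/2}$-energy and that the forcing integral combines with the two outer transition energies to produce exactly the factor $\sqrt2-\alpha_0$ at $l_1$ and $l_3$; everything else is already supplied by Propositions~\ref{p3ter}, \ref{p0} and \ref{p5}.
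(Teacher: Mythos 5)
Your proposal is correct and follows exactly the route the paper intends: the paper gives no separate proof of Theorem \ref{p10shter} beyond the remark that it is proved ``similarly'' to Theorem \ref{p10sh}, and your argument is precisely that adaptation — Proposition \ref{p3ter} pins the three limit points in $[-\xi,-\xi']$, $\{0\}$, $[\xi',\xi]$, the Fatou/dominated-convergence lower bound gives $\frac{2}{3}(\sqrt 2-\alpha_0)\big((\mu(l_1))^{3/2}+(\mu(l_3))^{3/2}\big)+\alpha_0 K$ with $K=\frac{4}{3}(\mu(\zeta))^{3/2}$, and comparison with Proposition \ref{p5} forces $\mu(l_1)=\mu(l_3)=0$ when $\alpha_0<\sqrt 2$ and $\mu(l_1)=\mu(l_3)=\mu(\zeta)$ when $\alpha_0>\sqrt 2$. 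Your observation that the colliding-zero cases b)--d) of Theorem \ref{p10sh} are avoided here is also accurate, since the three zeros are trapped in disjoint regions by Proposition \ref{p3ter}.
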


\begin{remark}
From the proof of Theorem \ref{p10sh}, it follows that \eqref{qua22}, \eqref{quab122odd} and \eqref{qua5} 
hold if we replace the inequalities by equalities, and take the limit of the renormalized energy, instead of the $\limsup$. However, the knowledge of this limit is not sufficient to determine the exact convergence of $\bar x_1(\epsilon,\alpha)$, and $\bar x_3(\epsilon,\alpha)$, in Theorem \ref{p10shter} (i). Smaller terms need to be computed in the expansion of the renormalized energy, and certainly the limits of the zeros of the minimizers also depend on the values of $\mu'$ at $\xi$ and $\xi'$.
\end{remark}

The structure of the global minimizer and the odd minimizer follows from the convergence of their zeros as established in Theorems \ref{p10sh}, \ref{p10shbis} and \ref{p10shter}.
\begin{itemize}
\item[(1)] When the zero of the minimizer goes to $\pm \xi$ or $\pm\xi'$ (the points where $\mu$ vanishes), the minimizer has 
the profile of a shadow kink. This means that
the appropriate rescaling to describe its behavior near $\pm \xi$ (or $\pm\xi'$) is 
$\tilde v_\epsilon(s)=\frac{v_\epsilon(\xi+s\epsilon^{2/3})}{\epsilon^{1/3}}$. 
We know from \cite{kink} that as $\epsilon \to 0$, $\tilde v_\epsilon$ converges in $C^1_{\mathrm{ loc}}(\R)$ to a minimal solution $\tilde V$ of the O.D.E.:
\begin{equation}\label{oderes4-0x}
\tilde V''(s)+\mu_1 s \tilde V(s)-\tilde V^3(s) + \alpha f(\xi)=0, \qquad \forall s\in \R,
\end{equation}
where $\mu_1:=\mu'(\xi)$.
\item[(2)] When the zero $\bar x(\epsilon)$ of the minimizer has a limit $l$ such that $\mu(l)>0$, the minimizer has the profile of a standard kink. 
We rescale it as we did in the proof of Theorem \ref{p10sh}, by setting
$\tilde v_\epsilon(s)=v_\epsilon(\bar x(\epsilon)+s\epsilon)$, and obtain the convergence of $\tilde v_\epsilon$ to a hyperbolic tangent 
(cf. \eqref{sh11}).
\item[(3)] When the zero of the minimizer goes to $0$, the minimizer has the profile of a giant kink. 
The appropriate rescaling (cf. \cite{kink}) is given by:
$$\tilde v_{\epsilon,\alpha}(s):=\frac{v_{\epsilon,\alpha}(x_0+\epsilon s)}{\epsilon},\ \forall x_0\in (-\xi',\xi'),$$
and the rescaled functions $\tilde v_{\epsilon,\alpha}$ converge in $C^1_{\mathrm{ loc}}(\R)$ to a constant:
\begin{equation}
\lim_{\epsilon\to 0, \alpha\to \alpha_0} \frac{v_{\epsilon,\alpha}(x_0+s\epsilon)}{\epsilon}=-\frac{\alpha_0 f(x_0)}{\mu(x_0)},
\end{equation} 
\end{itemize}
To sum up, we can say that under assumption (A), and as $\epsilon\to 0$, the global minimizer $v_{\epsilon,\alpha}$ is composed of:
\begin{itemize}
\item[(i)] one shadow kink when $\alpha \in (0,\sqrt{2})$,
\item[(ii)] one standard kink when $\alpha \in(\sqrt{2},\alpha^{**})$, 
\item[(iii)] three standard kinks when $\alpha \in (\alpha^{**},\infty)$.
\end{itemize}
Similarly, under assumption (A), and as $\epsilon\to 0$, the odd minimizer $u_{\epsilon,\alpha}$ is composed of:
\begin{itemize}
\item[(i)] one standard kink when $\alpha  \in (0,\alpha_{\mathrm{odd}})$,
\item[(ii)] three standard kinks when $\alpha  \in (\alpha_{\mathrm{odd}},\infty)$.\end{itemize}
Finally, under assumption (B), and as $\epsilon \to 0$, the global minimizer $v_{\epsilon,\alpha}$ and the odd minimizer $u_{\epsilon,\alpha}$ are composed of:
\begin{itemize}
\item[(i)] two shadow kinks and a giant kink when $\alpha \in (0,\sqrt{2})$,
\item[(ii)] two standard kinks and a giant kink when $\alpha \to \alpha_0 \in (\sqrt{2},\infty)$.
\end{itemize}

Before closing this section, we also mention two regimes for which the global minimizer is odd and unique.
Let 
$$
\epsilon_0 =\sup_{ \phi \in H^1(\R), \phi \neq 0}\frac{(\int_{\R}\mu(x)\phi^2(x)\dd x)^{1/2}}{(\int_{\R}|\phi_x|^2)^{1/2}} .
$$ 
It is proved in \cite{kink} that $0< \epsilon_0<\infty$ if we assume $\mu\in L^{\infty}(\R)$, $\{\mu>0\}\neq \emptyset$,
and \eqref{hyp2}. From this Poincar\'{e} type inequality one can deduce (cf. \cite{kink}):

\begin{theorem}\label{p4c}
If $\epsilon \geq \epsilon_0$, then the global minimizer $v$ is odd. It is also the unique solution of \eqref{ode} in $H^1(\R)$. In particular, if $\alpha=0$, then
\begin{itemize}
\item the global minimizer $v\equiv 0$ if and only if $\epsilon\geq \epsilon_0$,
\item the odd minimizer $u \equiv 0$ if and only if $\epsilon\geq\epsilon_1$, where 
$$
0<\epsilon_1 =\sup_{ \phi \in H^1_{\mathrm{odd}}(\R), \phi \neq 0}\frac{(\int_{\R}\mu(x)\phi^2(x)\dd x)^{1/2}}
{(\int_{\R}|\phi_x|^2)^{1/2}}\leq \epsilon_0.$$
\end{itemize}
\end{theorem}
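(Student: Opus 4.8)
The plan is to base the entire argument on the sharp Poincar\'e-type inequality encoded in the definition of $\epsilon_0$: for every $\phi\in H^1(\R)$ one has $\int_\R\mu\phi^2\le\epsilon_0^2\int_\R|\phi_x|^2$, so that whenever $\epsilon\ge\epsilon_0$ the quadratic form $\phi\mapsto\epsilon\int_\R|\phi_x|^2-\frac1\epsilon\int_\R\mu\phi^2$ is nonnegative. The heart of the matter is to upgrade this to uniqueness of $H^1(\R)$ solutions of \eqref{ode}; oddness and the $\alpha=0$ dichotomies will then follow formally.

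First I would prove uniqueness. Let $u_1,u_2\in H^1(\R)$ be two solutions of \eqref{ode}; since they share the same forcing term $\epsilon\alpha f$, their difference $w:=u_1-u_2$ satisfies $\epsilon^2 w_{xx}+\mu w-(u_1^3-u_2^3)=0$. Multiplying by $w$ and integrating over $\R$---the boundary terms vanishing because $w\in H^1(\R)$ decays at $\pm\infty$ and, by Proposition \ref{p0}(ii), $w_x\to 0$ there---I obtain
\begin{equation}
\int_\R(u_1^3-u_2^3)w=-\epsilon^2\int_\R|w_x|^2+\int_\R\mu w^2\le 0,\nonumber
\end{equation}
the last inequality being exactly the Poincar\'e bound for $\epsilon\ge\epsilon_0$. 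On the other hand the monotonicity of $t\mapsto t^3$, in the form $(u_1^3-u_2^3)(u_1-u_2)=w^2(u_1^2+u_1u_2+u_2^2)\ge 0$, forces the left-hand side to be nonnegative. Hence both sides vanish, which gives $w\equiv 0$. Observe that only the nonstrict inequality is used, so the delicate boundary case $\epsilon=\epsilon_0$, where the supremum defining $\epsilon_0$ may be attained, is covered without any extra argument.

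With uniqueness in hand, oddness is immediate: under \eqref{hyp3} the function $\hat v(x)=-v(-x)$ again solves \eqref{ode} (this is the invariance recorded just before Proposition \ref{p0}, using $\mu$ even and $f$ odd), so uniqueness yields $v=\hat v$, i.e.\ $v$ is odd; in particular the global minimizer of Theorem \ref{th1} and the odd minimizer of Theorem \ref{th2} coincide with this single solution. For the statements at $\alpha=0$, the constant $0$ solves \eqref{ode}, so uniqueness forces $v\equiv 0$ whenever $\epsilon\ge\epsilon_0$. Conversely, if $\epsilon<\epsilon_0$ there is a test function $\phi$ with $\int_\R\mu\phi^2>\epsilon^2\int_\R|\phi_x|^2$; then $E(t\phi)=t^2\big(\frac\epsilon2\int_\R|\phi_x|^2-\frac1{2\epsilon}\int_\R\mu\phi^2\big)+\frac{t^4}{4\epsilon}\int_\R\phi^4<0$ for small $t>0$, so the global minimizer has strictly negative energy and cannot be trivial. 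The odd case is identical, carrying out both the subtraction argument and the test-function construction inside $H^1_{\mathrm{odd}}(\R)$ with $\epsilon_1$ in place of $\epsilon_0$; the inclusion $H^1_{\mathrm{odd}}(\R)\subset H^1(\R)$ gives $\epsilon_1\le\epsilon_0$, while an odd bump supported where $\mu>0$ gives $\epsilon_1>0$.

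The only genuinely technical point is the justification of the integration by parts at infinity, namely the decay of $w$ and $w_x$; this is supplied by $H^1(\R)$ membership together with Proposition \ref{p0}(ii), and is the step I would write out most carefully. Everything else is purely algebraic once the Poincar\'e inequality and the monotonicity of the cubic nonlinearity are combined.
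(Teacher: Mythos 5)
Your argument is correct, and it follows exactly the route the paper indicates: the theorem is stated as a consequence of the Poincar\'e-type inequality defining $\epsilon_0$, with the actual proof deferred to \cite{kink}, and your subtraction argument (Poincar\'e bound on the quadratic part plus monotonicity of $t\mapsto t^3$, i.e.\ convexity of $E$ for $\epsilon\ge\epsilon_0$) is precisely that deduction. The one step worth writing out, as you note, is the justification of $\int_\R w_{xx}w=-\int_\R|w_x|^2$; this is cleanest by observing that $\mu\in L^\infty$ and the boundedness of $H^1(\R)$ solutions give $w\in H^2(\R)$, after which everything you wrote goes through, including the boundary case $\epsilon=\epsilon_0$.
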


Another region where the global minimizer coincides with the odd minimizer is described next.
\begin{theorem}\label{p4d}
Given $\epsilon>0$, there exists $A>0$ such that for every $\alpha>A$, the odd minimizer $u_{\epsilon,\alpha}$ is the unique global minimizer.
\end{theorem}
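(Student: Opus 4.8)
The plan is to prove that for $\epsilon>0$ fixed and $\alpha$ large the global minimizer is \emph{unique}, and then to read off oddness from the symmetry of the problem. Since $\mu$ is even and $f$ is odd, the reflection $v\mapsto\hat v$, $\hat v(x)=-v(-x)$, preserves $E$, so $\hat v$ is a global minimizer whenever $v$ is. Hence if the global minimizer $v^*$ is unique, then $\hat{v^*}=v^*$, i.e. $v^*$ is odd; being odd and minimizing $E$ over all of $H^1(\R)$ it minimizes in particular over $H^1_{\mathrm{odd}}(\R)$, so $\min_{H^1_{\mathrm{odd}}}E=\min_{H^1}E$ and every odd minimizer is a global minimizer, hence equals $v^*$. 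Thus it suffices to establish uniqueness of the global minimizer for large $\alpha$.

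Suppose, toward a contradiction, that there are two distinct global minimizers $v_1\ne v_2$. By Proposition~\ref{p4} they do not intersect, so $w:=v_1-v_2$ has constant sign; assume $w>0$. Subtracting the two copies of \eqref{ode} and using $v_1^3-v_2^3=wQ$ with $Q:=v_1^2+v_1v_2+v_2^2$, we get $\epsilon^2 w''=(Q-\mu)w$ on $\R$. Multiplying by $w$ and integrating (the boundary terms vanish since $w,w'\to0$ at $\pm\infty$ by Proposition~\ref{p0}(ii)) yields the exact identity
\begin{equation}
\int_\R \epsilon^2 (w')^2+(Q-\mu)\,w^2\,\dd x=0 .\nonumber
\end{equation}
It therefore suffices to show that the Schr\"odinger form $\mathcal Q(\phi):=\int_\R \epsilon^2(\phi')^2+(Q-\mu)\phi^2$ is positive on nonzero $\phi\in H^1(\R)$, since this contradicts $\mathcal Q(w)=0$ with $w\ne0$.

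The positivity hinges on an a priori localization, which I regard as the main obstacle. Recall from the discussion preceding \eqref{renorm2} that $\sigma(x)$ solves the algebraic relation $\sigma^3-\mu\sigma=\epsilon\alpha f$ and changes sign exactly where $f$ does, i.e. at $0$ and $\pm\zeta$ (with $\mu>0$ there) and, harmlessly, where $\mu\le 0$. Rescaling $v_i=\alpha^{1/3}\omega_i$, one has $\sigma/\alpha^{1/3}\to(\epsilon f)^{1/3}$, and since $\sigma$ solves \eqref{ode} up to the lower–order term $\epsilon^2\sigma''$, a comparison/barrier argument (or, equivalently, the convexity of the limiting rescaled functional $\int \tfrac1{4\epsilon}\omega^4-\epsilon f\omega$, whose unique minimizer is $(\epsilon f)^{1/3}$) should give $\omega_i\to(\epsilon f)^{1/3}$ uniformly on compacts away from $\{0,\pm\zeta\}$. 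The upshot I need is: for all large $\alpha$, the ``bad set'' $B_\alpha:=\{Q<\mu\}$ is contained in a union of finitely many intervals centered at $\{0,\pm\zeta\}$ of length $\ell_\alpha\to0$; on fixed–width annuli around these points one has $Q-\mu\ge\gamma_0>0$ (indeed $\to\infty$); and $Q-\mu\ge0$ elsewhere (in particular for $|x|\ge\xi$, where $\mu<0$ forces $Q-\mu\ge 3v_i^2-\mu>0$). Establishing these bounds \emph{uniformly} in $\alpha$, through the nonsmoothness of $\sigma$ at its sign changes, is the delicate point.

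Granting this, positivity of $\mathcal Q$ follows from a one–dimensional Poincar\'e inequality with margins. Enclosing each bad interval $I_j$ (length $\ell_\alpha$) in a concentric interval $I_j^*$ carrying fixed–width margins $M_j$ on which $Q-\mu\ge\gamma_0$, one has, writing $\phi(x)=\phi(s)+\int_s^x\phi'$ for $s\in M_j$ and averaging,
\begin{equation}
\int_{I_j}\phi^2\,\dd x\le C\,\ell_\alpha^2\int_{I_j^*}(\phi')^2\,\dd x+C\,\ell_\alpha\int_{M_j}\phi^2\,\dd x .\nonumber
\end{equation}
Hence the only negative contribution to $\mathcal Q$, namely $-\|\mu_+\|_\infty\int_{B_\alpha}\phi^2$, is absorbed for $\alpha$ large: the first term above is dominated by $\epsilon^2\int(\phi')^2$ because $\ell_\alpha\to0$, and the second by $\gamma_0\int_{M_j}\phi^2$ because $\|\mu_+\|_\infty\,\ell_\alpha\le\gamma_0$ eventually. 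Summing over the finitely many $j$ and using $Q-\mu\ge0$ on the good region leaves $\mathcal Q(\phi)\ge\tfrac12\epsilon^2\int_\R(\phi')^2>0$ for every $\phi\ne0$ in $H^1(\R)$. This contradicts $\mathcal Q(w)=0$, proving uniqueness of the global minimizer for $\alpha$ beyond some threshold $A$ and, by the first paragraph, the theorem.
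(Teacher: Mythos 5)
Your proposal takes essentially the same route as the paper: the $\alpha^{-1/3}$ rescaling giving $\alpha^{-1/3}u_{\epsilon,\alpha}(x_0+\alpha^{-1/3}s)\to(\epsilon f(x_0))^{1/3}$, hence blow-up of the minimizer away from the zeros of $f$, combined with a Poincar\'e-type inequality applied to the linearized equation $\epsilon^2 w''=(Q-\mu)w$ for the difference of two non-intersecting global minimizers, and the symmetry reduction from uniqueness to oddness. The uniform localization of the bad set that you flag as the delicate point is exactly what the paper extracts from the rescaling argument of Theorem \ref{p10sh} (performed around moving base points), so the two arguments do not genuinely diverge.
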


Theorem~\ref{p4d} is proved in \cite{kink} in the case of a function $\mu$ with one bump. In our context, the proof is similar. First, we rescale the standard kinks with respect to an arbitrary $x_0 \in \R$, by setting $$\tilde u(s)=\alpha^{-1/3}u(x_0+\alpha^{-1/3}s).$$
Next, proceeding as in Theorem \ref{p10sh}, we compute the limit of the rescaled kinks as $\alpha\to \infty$, and $\epsilon>0$ is fixed:
\begin{equation}
\lim_{ \alpha\to \infty} \alpha^{-1/3}u_{\epsilon,\alpha}(x_0+\alpha^{-1/3}s)=(\epsilon f(x_0))^{1/3},
\end{equation} for the $C^1_{\mathrm{ loc}}(\R)$ convergence, and in particular:
$$|u_{\epsilon,\alpha}| \to \infty \text{ a.e. in } \R.$$
Finally, we use a Poincar\'{e} type inequality to conclude.

\section{The case of a periodic function $\mu$}
\label{sec:per}

It is instructive to describe in this section the case of a periodic function $\mu$.
In what follows, we mention the results correponding to this case, and just point out the new elements that are not trivial when adjusting the proofs. 

\begin{theorem}\label{th1per}
Assume that the functions $\mu$ and $f$ are continuous and periodic: 
\begin{align}
\label{hyp1per}
\mu(x+T)=\mu(x), \ f(x+T)=f(x), \text{ for some period } T>0,
\end{align}
and consider the energy functional restricted to one period:
\begin{equation}
\label{funct 0per}
E(u)=\int_{0}^T\frac{\epsilon}{2}|u_x|^2-\frac{1}{2\epsilon}\mu(x)u^2+\frac{1}{4\epsilon}|u|^4-\alpha f(x)u.
\end{equation}
Then there exists $v \in H^1_{\mathrm{per}}([0,T]):=\{u \in H^1([0,T]) : u(0)=u(T) \}$ such that $E(v)=\min_{H^1_{\mathrm{per}}([0,T])} E$. Moreover, $v$ can be extended periodically on all $\R$ to define a $C^2(\R)$ classical solution of the O.D.E. \eqref{ode}, that we still call $v$.
\end{theorem}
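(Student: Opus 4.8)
The plan is to run the direct method of the calculus of variations on the closed subspace $H^1_{\mathrm{per}}([0,T])$, exactly as in the proof of Theorem \ref{th1}, and then to handle the junction at the endpoints of the period with care, since this matching is the only genuinely new feature compared with the whole-line problem.

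First I would verify that $E$ is bounded below and coercive on $H^1_{\mathrm{per}}([0,T])$. Completing the square gives the pointwise bound $\frac{1}{4\epsilon}u^4-\frac{1}{2\epsilon}\mu u^2=\frac{1}{4\epsilon}(u^2-\mu)^2-\frac{\mu^2}{4\epsilon}\geq-\frac{\|\mu\|_\infty^2}{4\epsilon}$, while the forcing term is absorbed by Young's inequality $|\alpha f u|\leq\frac{1}{8\epsilon}u^4+C_\epsilon|\alpha f|^{4/3}$, with $f$ bounded on $[0,T]$ by continuity. Hence $E(u)\geq\frac{\epsilon}{2}\int_0^T|u_x|^2+\frac{1}{8\epsilon}\int_0^T u^4-C$, which controls both the $H^1$ seminorm and the $L^4$ norm (and a fortiori the $L^2$ norm, the interval being bounded). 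A minimizing sequence is therefore bounded in $H^1([0,T])$.

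Next I would extract a weakly convergent subsequence $v_n\rightharpoonup v$ in $H^1$. By the compact embedding $H^1([0,T])\hookrightarrow C([0,T])$ this convergence is uniform, so the lower order terms $\int\mu v_n^2$, $\int v_n^4$ and $\int f v_n$ pass to the limit; the constraint $v_n(0)=v_n(T)$ survives, giving $v\in H^1_{\mathrm{per}}([0,T])$; and the Dirichlet term is weakly lower semicontinuous by convexity. Thus $E(v)=\min E$. Testing against $\phi\in C_c^\infty(0,T)$ yields the interior Euler--Lagrange equation \eqref{ode}, and the bootstrap $v\in H^1\hookrightarrow C^0\Rightarrow v_{xx}=-\epsilon^{-2}(\mu v-v^3+\epsilon\alpha f)\in C^0\Rightarrow v\in C^2(0,T)$ supplies interior regularity.

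The main point, and the only step that differs from the $\R$ case, is the junction. Testing the weak equation against an arbitrary periodic $\phi$ and integrating the Dirichlet term by parts leaves, after the interior integral vanishes, the boundary contribution $\epsilon\phi(0)\bigl(v_x(T)-v_x(0)\bigr)$; its vanishing for all such $\phi$ forces the natural boundary condition $v_x(0)=v_x(T)$. Combined with $v(0)=v(T)$ and the periodicity of $\mu$ and $f$, this guarantees that the periodic extension matches to first order across each point $kT$, and the ODE then upgrades this to a matching of second derivatives, so the extension is a global $C^2(\R)$ classical solution of \eqref{ode}. I expect this matching argument to be the only delicate step, the compactness and lower semicontinuity parts being entirely parallel to Theorem \ref{th1}.
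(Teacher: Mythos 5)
Your proof is correct and follows essentially the route the paper intends: the paper states Theorem \ref{th1per} without proof, deferring to the direct-method argument of Theorem \ref{th1} (from \cite{kink}), and you reproduce that argument on $H^1_{\mathrm{per}}([0,T])$ while correctly isolating the one genuinely new step, namely the natural boundary condition $v_x(0)=v_x(T)$ obtained from testing with periodic variations, which together with $v(0)=v(T)$ and the periodicity of $\mu$ and $f$ yields the $C^2(\R)$ periodic extension.
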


\begin{theorem}\label{th2per}
In addition to \eqref{hyp1per}, we suppose that $f$ is odd and $\mu$ is even. Then, there exists $u \in H^1_0([0,T/2])$ such that $E(u)=\min_{H^1_0([0,T/2])} E$, where here we consider the energy functional restricted to the interval $[0,T/2]$. Moreover, $u$ can be extended on all $\R$, to define an odd, and $T$-periodic,  classical solution of the O.D.E. \eqref{ode}, that we still call $u$. 
\end{theorem}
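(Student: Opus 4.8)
The plan is to mimic the proofs of Theorems~\ref{th1} and~\ref{th2}, now working on the bounded interval $[0,T/2]$ with Dirichlet data, and then to exploit the symmetries ($\mu$ even, $f$ odd) to glue the restricted minimizer into a global odd, periodic solution of \eqref{ode}.

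First I would establish existence of a minimizer of $E$ over $H^1_0([0,T/2])$ by the direct method. Coercivity follows by bounding the functional from below: writing $-\frac{1}{2\epsilon}\mu u^2 \geq -\frac{\|\mu\|_\infty}{2\epsilon}u^2$ and absorbing both the negative quadratic term and the linear forcing $-\alpha f u$ into the quartic term $\frac{1}{4\epsilon}u^4$ via Young's inequality, one obtains $E(u)\geq \frac{\epsilon}{2}\int_0^{T/2}|u_x|^2 - C$ with $C$ independent of $u$; since the Dirichlet condition is imposed, the Poincaré inequality makes $\|u_x\|_{L^2}$ control the full $H^1_0$ norm, so $E$ is coercive. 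Lower semicontinuity is standard: along a minimizing sequence bounded in $H^1_0([0,T/2])$ one extracts a subsequence converging weakly in $H^1_0$ and, by the compact embedding $H^1_0([0,T/2])\hookrightarrow C([0,T/2])$, uniformly; the Dirichlet term is weakly lower semicontinuous while the remaining terms pass to the limit by uniform convergence. This yields a minimizer $u$, and testing the first variation against $C_c^\infty((0,T/2))$ shows $u$ solves \eqref{ode} weakly, with $u(0)=u(T/2)=0$. Since $\mu$ and $f$ are continuous, a standard ODE bootstrap promotes $u$ to a classical $C^2([0,T/2])$ solution.

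The heart of the matter is the extension. I would define $u$ on $\R$ by odd reflection about the origin, $u(-x):=-u(x)$, followed by $T$-periodic repetition. Because $u(0)=u(T/2)=0$, the extension is continuous, and an elementary check shows it becomes odd not only about $0$ but about every half-period point $kT/2$, with $u(kT/2)=0$. That the extension still solves \eqref{ode} on each open interval $(kT/2,(k+1)T/2)$ is where the symmetries enter: on $(-T/2,0)$ one substitutes $u(x)=-u(-x)$ into \eqref{ode}, and using $\mu(-x)=\mu(x)$, $f(-x)=-f(x)$, the left-hand side becomes $-1$ times the (vanishing) left-hand side evaluated at $-x\in(0,T/2)$, hence is zero; periodicity then propagates this to every interval.

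The main obstacle, and the only genuinely new point, is $C^2$-regularity across the junction points $x=kT/2$, since a priori the reflected-and-repeated function is only continuous there. For a function that is odd about such a junction the first derivative automatically matches, but the second derivative matches only if it vanishes at the junction. I would obtain this from the equation itself: at any half-period point $p=kT/2$ one has $u(p)=0$, and crucially $f(p)=0$, because $f$ is odd and $T$-periodic (so $f(T/2)=f(-T/2)=-f(T/2)$, and likewise at all $kT/2$); evaluating \eqref{ode} then forces $\epsilon^2 u''(p)=0$, i.e.\ $u''(p)=0$, which is exactly the compatibility needed for the odd extension to be $C^2$ at $p$. Once $u\in C^2(\R)$ and solves \eqref{ode} on a dense open set, continuity of all terms forces the equation to hold everywhere, producing the desired odd, $T$-periodic classical solution.
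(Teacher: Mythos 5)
Your proposal is correct and follows essentially the route the paper intends: the paper gives no separate proof of Theorem \ref{th2per}, presenting it as a routine adaptation of Theorems \ref{th1} and \ref{th2}, and the direct method on $H^1_0([0,T/2])$ followed by odd reflection and periodic continuation is exactly that adaptation. You also correctly isolate and resolve the one genuinely non-trivial point, namely the $C^2$ matching at the half-period points via $u=f=0$ there forcing $u''=0$ in \eqref{ode}.
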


We will always denote by $v$ or $v_{\epsilon,\alpha}$ the global minimizer, and by $u$ or $u_{\epsilon,\alpha}$, the odd one.

In what follows we assume that:
\begin{align}
\label{hyp3}
\left\{
\begin{aligned}
 &\text{$\mu \in C^1(\R) $ is $T$-periodic, even, $\mu'<0$ in $(0,T/2)$,}\\
 &\text{$f=-\mu'/2$, $f'(T/2)<0$,}\\
\end{aligned}
\right.
\end{align}
and that one of the following holds:
\begin{itemize}
 \item[(A)] $\mu$ remains positive in the interval $[0,T/2]$,
 \item[(B)] $\mu$ changes sign in the interval $[0,T/2]$: we denote by $\xi$ its unique zero in $(0,T/2)$,
\end{itemize}



\begin{proposition}\label{p0per}(Properties of solutions)
\begin{itemize}
\item[(i)] $$E(v,[0,T])=\int_{0}^T-\frac{1}{4\epsilon}|v|^4-\frac{\alpha}{2} f(x)v, \ E(u,[0,T/2])=\int_0^{T/2}-\frac{1}{4\epsilon}|u|^4-\frac{\alpha}{2} f(x)u.$$
\item[(ii)] For $\epsilon$ and $\alpha$ belonging to a bounded interval, let $u_{\epsilon,\alpha}$ be a $T$-periodic solution of \eqref{ode}. Then, the solutions $u_{\epsilon,\alpha}$ are uniformly bounded by a constant $M$.
\end{itemize}
\end{proposition}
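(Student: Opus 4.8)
The plan is to adapt the two arguments behind Proposition~\ref{p0} to the periodic (and Dirichlet) setting, where both become slightly simpler.

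For part (i) I would use the ``multiply by the solution'' identity. Multiplying \eqref{ode} by the solution and integrating over the relevant interval, one integration by parts yields
\begin{equation*}
-\int\epsilon^2|u_x|^2+\int\mu u^2-\int u^4+\int\epsilon\alpha f u=0,
\end{equation*}
the boundary contribution $[\epsilon^2 u u_x]$ vanishing: for the global minimizer $v$ on $[0,T]$ because $v(0)=v(T)$ and $v_x(0)=v_x(T)$ by periodicity, and for the odd minimizer $u$ on $[0,T/2]$ because $u(0)=u(T/2)=0$ by the Dirichlet condition defining $H^1_0([0,T/2])$. Dividing this identity by $2\epsilon$ and substituting the resulting expression for $\frac{\epsilon}{2}\int|u_x|^2$ into \eqref{funct 0per} makes the kinetic term and the $\mu$-quadratic term cancel, leaving exactly $-\frac{1}{4\epsilon}\int|u|^4-\frac{\alpha}{2}\int f u$. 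Both formulas in (i) follow.

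For part (ii) I would invoke the maximum principle at the extrema of the periodic solution. Being continuous and $T$-periodic, $u_{\epsilon,\alpha}$ attains its maximum at some $x_0$ and its minimum at some $x_1$, with $u_{xx}(x_0)\le 0\le u_{xx}(x_1)$. Evaluating \eqref{ode} at $x_0$ and using $u_{xx}(x_0)\le 0$ gives $u(x_0)^3\le\mu(x_0)u(x_0)+\epsilon\alpha f(x_0)$, so if $M_+:=\max u>0$ then $M_+^3\le\|\mu\|_\infty M_++C$, where $C$ depends only on the prescribed bounded ranges of $\epsilon,\alpha$ and on $\sup|f|$ over one period. Since the cubic dominates for large $M_+$, this bounds $\max u$ from above independently of $\epsilon,\alpha$; the symmetric computation at $x_1$ (with $u_{xx}(x_1)\ge 0$) bounds $\min u$ from below, and together they give $\|u_{\epsilon,\alpha}\|_\infty\le M$.

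I do not expect a genuine obstacle here: the compactness of the period removes the only delicate point of Proposition~\ref{p0}, namely that on $\R$ one had to use the decay $u\to 0$ at $\pm\infty$ both to locate an interior extremum and to annihilate the boundary terms, whereas on $[0,T]$ the extrema are automatically attained and the boundary terms cancel for free. The only items worth checking explicitly are that the boundary term in (i) vanishes for the two different reasons above, and that the constant $C$ in (ii) is genuinely uniform over the given ranges of $\epsilon$ and $\alpha$; both are immediate.
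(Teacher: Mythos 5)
Your proposal is correct and follows essentially the same route the paper intends: part (i) is the standard identity obtained by multiplying \eqref{ode} by the solution and integrating by parts (with the boundary terms killed by periodicity for $v$ and by the Dirichlet conditions $u(0)=u(T/2)=0$ for the odd minimizer), and part (ii) is the maximum-principle evaluation of \eqref{ode} at the extrema, which for a periodic solution are automatically attained. The paper gives no separate proof here but defers to the arguments for Proposition~\ref{p0} in \cite{kink}, and your observation that compactness of the period replaces the decay-at-infinity step is exactly the simplification that makes the adaptation immediate.
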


\begin{proposition}\label{p1per}
When $\alpha=0$, the minimizer $v$ constructed in Theorem \ref{th1}:
\begin{itemize}
\item[(i)] has constant sign or is identically zero,
\item[(ii)] is even, 
\item[(iii)] is unique up to changing $v$ by $-v$.
\end{itemize}
Assuming that $v>0$, then 
\begin{itemize}
\item[(iv)] $v'(x)<0$, $\forall x \in (0,T/2)$, and $v(0)\leq\sqrt{ \mu(0)}$,
\end{itemize}
\end{proposition}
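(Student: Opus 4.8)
The plan is to adapt the arguments behind Proposition~\ref{p1} and Proposition~\ref{p1-part2} to the torus $[0,T]$, replacing the decay at $\pm\infty$ (unavailable here) by periodicity and by the fact that $\mu$ attains its extrema precisely at the symmetry points $0$ and $T/2$. For part (i) I would first observe that $E(|v|)=E(v)$ and $|v|\in H^1_{\mathrm{per}}([0,T])$, so $|v|$ is again a minimizer and hence, by Theorem~\ref{th1per}, a nonnegative $C^2$ solution of \eqref{ode} with $\alpha=0$. If $v$ changed sign it would vanish at some $x_1$, so $|v|(x_1)=|v|'(x_1)=0$, and uniqueness for the initial value problem of \eqref{ode} would force $|v|\equiv 0$, i.e. $v\equiv 0$; this contradiction shows $v$ has constant sign or is identically zero.

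For part (iii) I would use the hidden convexity of the functional: for a positive competitor put $w=v^2\in H^1_{\mathrm{per}}$, $w>0$; then for $\alpha=0$ the energy rewrites as $\int_0^T \tfrac{\epsilon}{8}\tfrac{w_x^2}{w}-\tfrac{\mu}{2\epsilon}w+\tfrac{1}{4\epsilon}w^2$, which is convex in $w$ on the convex cone of positive periodic functions and strictly convex because of the $\int w^2$ term. Hence the positive minimizer is unique, and together with (i) this yields uniqueness up to $v\mapsto -v$. For (ii), since $\mu$ is even and $T$-periodic the reflection $x\mapsto v(-x)$ preserves both the energy and the sign of $v$, so by the uniqueness just established $v(-x)=v(x)$, i.e. $v$ is even (equivalently symmetric about $0$ and about $T/2$).

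For part (iv) assume $v>0$; evenness and periodicity give $v'(0)=v'(T/2)=0$, and \eqref{ode} reads $\epsilon^2 v''=v(v^2-\mu)$. To prove $v(0)\le\sqrt{\mu(0)}$ I argue by contradiction: if $v(0)^2>\mu(0)=\max\mu$, then $v'(0)=0$ and $v''(0)>0$, so $v$ increases for $x>0$; as long as it increases we have $v^2>v(0)^2>\mu$, hence $v''>0$, so $v'$ stays strictly increasing and positive, incompatible with $v'(T/2)=0$. The symmetric argument at $T/2$ (where $\mu$ is minimal) gives $v(T/2)^2\ge\mu(T/2)$. For the monotonicity $v'<0$ on $(0,T/2)$, suppose $v'>0$ somewhere and let $(a,b)\subset(0,T/2)$ be a maximal interval on which $v'>0$; if $a,b$ are interior then $a$ is a local minimum and $b$ a local maximum, so by \eqref{ode} $v(a)^2\ge\mu(a)$ and $v(b)^2\le\mu(b)$, while $v(a)<v(b)$ forces $\mu(a)<\mu(b)$, contradicting $\mu'<0$. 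The boundary cases $a=0$ or $b=T/2$ are excluded using $v(0)^2\le\mu(0)$, $v(T/2)^2\ge\mu(T/2)$ and the symmetry of $v$ about $T/2$. Thus $v$ is nonincreasing, and strictness follows because at any interior zero $x_0$ of $v'$ (necessarily a local maximum of $v'$, so $v''(x_0)=0$ and $v'''(x_0)\le 0$) differentiating \eqref{ode} gives $\epsilon^2 v'''(x_0)=-\mu'(x_0)v(x_0)>0$, a contradiction.

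I expect the monotonicity in (iv) to be the main obstacle. In \cite{kink} the analogous statement rested on the decay of $v$ at infinity and on the convergence $\theta_\epsilon\to 0$; on the torus neither is available, so the entire argument must instead be driven by the two facts that $\mu$ is \emph{strictly} monotone on $(0,T/2)$ and attains its extrema exactly at the symmetry points $0$ and $T/2$. These are what make the sign comparisons between $v(\cdot)^2$ and $\mu(\cdot)$ at critical points decisive, and they also explain why the extra hypothesis $f'(T/2)<0$ plays no role in the regime $\alpha=0$ treated here.
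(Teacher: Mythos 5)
The paper never writes out a proof of Proposition~\ref{p1per}: it is one of the statements in Section~\ref{sec:per} that the authors declare to be routine adaptations of \cite{kink} and of Propositions~\ref{p1}--\ref{p1-part2}, and only the non-obvious adaptations (e.g.\ Proposition~\ref{p4per}) receive a written proof. Measured against the arguments the paper actually uses for the analogous line-case statements, your proposal is correct and correctly locates the one place where the adaptation is genuinely non-trivial. Parts (i)--(iii) follow the standard route the paper relies on: $E(|v|)=E(v)$ when $\alpha=0$ plus ODE uniqueness for (i), the hidden convexity of the energy in $w=v^2$ (valid here because a positive periodic minimizer is bounded below by a positive constant on the compact period, so the convex combination $(1-t)w_1+tw_2$ stays in the admissible cone) for (iii), and reflection invariance for (ii); one small point you leave implicit is ruling out the coexistence of the zero minimizer with a positive one, which follows from the scaling $t\mapsto E(tv)$. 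For (iv) you are right that the paper's own proof of Proposition~\ref{p1-part2}(i) — convexity of $v$ on $\{\mu\le 0\}$ together with decay at infinity — is unavailable under hypothesis (A) on the torus, and your substitute (comparing $v^2$ with $\mu$ at the endpoints of a maximal interval of increase, exploiting that $\mu$ is strictly decreasing on $[0,T/2]$, then upgrading $v'\le 0$ to $v'<0$ via $\epsilon^2 v'''(x_0)=-\mu'(x_0)v(x_0)>0$ at an interior critical point) is sound. In fact your separate treatment of the boundary cases $a=0$, $b=T/2$ is unnecessary: at the endpoints of any maximal interval $(a,b)\subset[0,T/2]$ of increase one has $v'(a)=v'(b)=0$ (using $v'(0)=v'(T/2)=0$ from the symmetries), hence $v''(a)\ge 0$ and $v''(b)\le 0$, i.e.\ $v(a)^2\ge\mu(a)$ and $v(b)^2\le\mu(b)$, and $v(b)>v(a)>0$ already contradicts $\mu(b)<\mu(a)$ without invoking $v(0)\le\sqrt{\mu(0)}$. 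Your closing observation that $f'(T/2)<0$ is irrelevant for $\alpha=0$ matches the paper's own remark that this hypothesis is only used in Lemma~\ref{p3bbis}.
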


\begin{proposition}\label{p2per} 
When $\alpha=0$, the odd minimizer $u$ constructed in Theorem \ref{th2} has constant sign in the interval $(0,T/2)$ or is identically zero. Furthermore, it is unique up to changing $u$ by $-u$. On the other hand, 
when $\alpha>0$, the odd minimizer $u$ is positive in the interval $(0,T/2)$ and unique.
\end{proposition}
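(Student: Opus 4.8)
Proposition~\ref{p2per} combines several distinct assertions, so the plan is to treat them in sequence, reusing the techniques from the non-periodic analogues (Propositions~\ref{p2} and~\ref{p4}) adapted to the periodic setting.

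For the case $\alpha=0$, the argument should mirror Proposition~\ref{p2}. The plan is as follows. First I would show that the odd minimizer $u$, restricted to $(0,T/2)$, has constant sign (or is identically zero). The key tool is that $|u|$ (or rather the rearrangement replacing $u$ by $|u|$ on $(0,T/2)$ and extending oddly) does not increase the energy when $\alpha=0$, since the functional \eqref{funct 0per} depends on $u$ only through $u^2$, $|u|^4$, and $|u_x|^2$ in this case. If $u$ changed sign in $(0,T/2)$, reflecting the negative part would produce a competitor with the same energy but with a corner, and a standard cut-and-paste/smoothing argument (using that a minimizer is a classical $C^2$ solution by Theorem~\ref{th2per}) would yield a strictly smaller energy unless $u$ already had constant sign, a contradiction. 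Uniqueness up to sign then follows from the non-intersection property: two distinct odd minimizers can only meet at the endpoints of $(0,T/2)$ (the analogue of Proposition~\ref{p4}), but combined with the ODE this forces them to coincide up to the sign change $u \mapsto -u$.

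The genuinely new content is the case $\alpha>0$, where I must prove that $u$ is \emph{positive} on $(0,T/2)$ and \emph{unique}. Positivity is the heart of the matter. Here the forcing term $-\alpha f(x)u$ breaks the $u \mapsto -u$ symmetry, so the sign is determined by the sign of $f$. Under \eqref{hyp3} we have $f = -\mu'/2 > 0$ on $(0,T/2)$. The plan is to argue as in remark~1) of the proof of Proposition~\ref{p3}: at any interior zero $z \in (0,T/2)$ of $u$ where $u$ would attain a local maximum, we would have $u(z)=u'(z)=0$ and $u''(z)\leq 0$, but \eqref{ode} gives $\epsilon^2 u''(z) = -\epsilon\alpha f(z) < 0$, which is consistent, so I instead rule out a local \emph{minimum} with $u(z)=0$: there $u''(z)\geq 0$ contradicts $\epsilon^2 u''(z) = -\epsilon \alpha f(z) < 0$. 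Thus $u$ cannot touch zero from below in $(0,T/2)$, forcing $u>0$ throughout once we know $u \geq 0$ near the endpoints. To obtain nonnegativity I would compare $u$ with $|u|$ using that the forcing integral $-\alpha\int_0^{T/2} f u$ is not larger for $|u|$ than for $u$ when $f>0$, so replacing $u$ by $|u|$ strictly decreases (or does not increase) the energy; combined with the strong maximum principle this gives $u>0$.

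Uniqueness for $\alpha>0$ is then the final step and I expect it to be the main obstacle, since the symmetry-based uniqueness of the $\alpha=0$ case is unavailable. The plan is to use the non-intersection principle for odd minimizers (the periodic analogue of Proposition~\ref{p4}): if $u_1 \neq u_2$ are two positive odd minimizers, they can only intersect at $0$ and $T/2$ where they both vanish by oddness/periodicity. Then $w = u_1 - u_2$ has constant sign on $(0,T/2)$, say $w>0$, and subtracting the two copies of \eqref{ode} gives $\epsilon^2 w'' = -\mu w + (u_1^3 - u_2^3) = \big(-\mu + (u_1^2 + u_1 u_2 + u_2^2)\big) w$. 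Since the forcing term $\epsilon\alpha f$ cancels, this is a \emph{linear} homogeneous equation for $w$ with $w(0)=w(T/2)=0$ and $w>0$ inside; testing against $w$ and integrating by parts, together with the fact that both $u_i$ minimize the same strictly convex-in-the-right-sense energy, should force $w \equiv 0$. The delicate point will be justifying that the relevant quadratic form is positive definite (equivalently, that the second variation at a positive minimizer has no nontrivial kernel on $H^1_0([0,T/2])$), which is where I anticipate the real work lies; this is a standard but nontrivial consequence of the strict monotonicity of $u \mapsto u^3$ and the minimality of both solutions.
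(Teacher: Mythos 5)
The paper does not actually write out a proof of Proposition~\ref{p2per}; it is one of the statements in Section~\ref{sec:per} left as an adaptation of \cite{kink} and of Propositions~\ref{p2} and \ref{p4}. Measured against that, your plan for the $\alpha=0$ part and for the positivity when $\alpha>0$ is correct and is the expected argument: since $f=-\mu'/2>0$ on $(0,T/2)$, replacing $u$ by $|u|$ on $(0,T/2)$ and extending oddly leaves every term of the energy unchanged except $-\alpha\int_0^{T/2}fu$, which strictly decreases unless $u\geq 0$ a.e.\ on $(0,T/2)$; minimality therefore gives $u\geq 0$ on \emph{all} of $(0,T/2)$ (not merely ``near the endpoints'', as you write --- that phrasing would not suffice), and an interior zero $z$ would then be a local minimum at height $0$, impossible since $\epsilon^2u''(z)=-\epsilon\alpha f(z)<0$. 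The $\alpha=0$ case is likewise standard: $|u|$ oddly extended is again a minimizer, hence a $C^2$ solution, and an interior zero of $|u|$ forces $|u|(z)=|u|'(z)=0$ and hence $u\equiv 0$ by Cauchy uniqueness for \eqref{ode}.

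The one step you do not close is the uniqueness for $\alpha>0$, and the difficulty you anticipate --- positive definiteness of the second variation, i.e.\ triviality of its kernel --- is both unavailable in general (minimality only yields a \emph{nonnegative} second variation) and unnecessary. Two ways to finish. (a) Hidden convexity: for $u\geq 0$ set $\rho=u^2$; then $\int|u'|^2=\int|\rho'|^2/(4\rho)$ is convex in $\rho$, $-\mu u^2$ is linear, $u^4=\rho^2$ is strictly convex, and $-\alpha f u=-\alpha f\sqrt{\rho}$ is convex because $f\geq 0$ on $(0,T/2)$; hence $E$ is strictly convex on the cone of nonnegative functions in $H^1_0([0,T/2])$, and the nonnegative minimizer is unique. (b) Your route, completed: by the non-intersection property two distinct odd minimizers are ordered, say $0<u_1<u_2$ on $(0,T/2)$; subtracting the two equations and testing with $w=u_2-u_1\in H^1_0([0,T/2])$ gives $\int\big(\epsilon|w'|^2-\tfrac{1}{\epsilon}\mu w^2+\tfrac{1}{\epsilon}(u_1^2+u_1u_2+u_2^2)w^2\big)=0$; writing $u_1^2+u_1u_2+u_2^2=3u_1^2+(u_2-u_1)(u_2+2u_1)$, the left-hand side equals $D^2E(u_1)[w,w]+\tfrac{1}{\epsilon}\int w^3(u_2+2u_1)$, where the first term is $\geq 0$ by minimality of $u_1$ and the second is $>0$ by the ordering --- a contradiction. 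Note that the ordering from the non-intersection principle is genuinely used here (averaging the second variations at $u_1$ and $u_2$ instead only yields the trivial inequality $\int w^4\geq 0$). Finally, $-u$ is negative on $(0,T/2)$ and hence has strictly larger energy, so the minimizer is unique outright and not merely up to sign, as the statement requires.
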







\begin{proposition}\label{p4per}
If $v_1$ and $v_2$ are two distinct global minimizers, then $v_1$ and $v_2$ do not intersect. Similarly, if $u_1$ and $u_2$ are two distinct odd minimizers, then $u_1$ and $u_2$ only intersect at the origin. Futhermore, a global minimizer $v$ either does not intersect an odd minimizer $u$ or coincides with it.
\end{proposition}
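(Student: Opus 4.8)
The plan is to transcribe, into the periodic setting, the cut-and-paste (energy rearrangement) argument that proves the non-periodic Proposition~\ref{p4} in \cite{kink}, while tracking the admissible class and the zeros forced by the symmetry. The one analytic input underlying everything is uniqueness for the initial value problem of \eqref{ode}: the right-hand side $\tfrac{1}{\epsilon^{2}}\bigl(w^{3}-\mu w-\epsilon\alpha f\bigr)$ is locally Lipschitz in $w$ and continuous in $x$, so two solutions that share value and first derivative at one point coincide on all of $\R$. I will also use that, by Theorems~\ref{th1per} and \ref{th2per}, every minimizer is a $C^{2}(\R)$ (hence corner-free) solution of \eqref{ode}. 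For two global minimizers $v_{1},v_{2}\in H^{1}_{\mathrm{per}}([0,T])$, set $w_{1}=\min(v_{1},v_{2})$ and $w_{2}=\max(v_{1},v_{2})$; since $w_{i}(0)=w_{i}(T)$ these lie in $H^{1}_{\mathrm{per}}([0,T])$, and locality of the functional together with the fact that $v_{1}'=v_{2}'$ a.e.\ on $\{v_{1}=v_{2}\}$ yields $E(w_{1})+E(w_{2})=E(v_{1})+E(v_{2})$ over one period. As $E(w_{i})\ge\min E$, both $w_{1}$ and $w_{2}$ are global minimizers, hence $C^{2}$ solutions. If $v_{1},v_{2}$ touched at some $x_{0}$, then at $x_{0}$ the function $w_{1}$ would be either corner-free (tangential contact) or genuinely cornered (transversal contact); a corner contradicts $w_{1}\in C^{2}$, while tangential contact forces $w_{1}\equiv v_{1}$ and $w_{2}\equiv v_{1}$, hence $v_{1}\equiv v_{2}$. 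Thus distinct global minimizers never meet.

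For two odd minimizers the admissible class is $H^{1}_{0}([0,T/2])$, and $\min,\max$ again preserve the vanishing at $0$ and $T/2$; the identical rearrangement, applied on the open interval $(0,T/2)$, shows that two distinct odd minimizers cannot touch inside $(0,T/2)$. They do both vanish at the lattice $\{kT/2:k\in\Z\}$ forced by oddness and $T$-periodicity, so the sharp statement is that $u_{1}$ and $u_{2}$ meet exactly at these common zeros and nowhere else; this is the periodic reading of ``intersect only at the origin''.

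The comparison of a global minimizer $v$ with the odd minimizer $u$ is the delicate point, since a priori $E(u)>\min E$, so the rearrangement above does not force $\min(v,u)$ or $\max(v,u)$ to be minimizers. Here I exploit the symmetry: $\hat v(x)=-v(-x)$ is again a global minimizer with $E(\hat v)=E(v)$, so by the first part $v$ and $\hat v$ are ordered, say $v\le\hat v$ on $\R$. The median $P=\max\bigl(v,\min(u,\hat v)\bigr)$ is then odd (the median is symmetric in its arguments and $\hat u=u$, so $\hat P=P$) and, because $v(T/2)\le 0\le\hat v(T/2)$, it satisfies $P(0)=P(T/2)=0$, hence lies in the odd class. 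A standard truncation estimate --- clamping a function between an ordered pair of solutions of the Euler--Lagrange equation does not increase $E$ --- gives $E(P)\le E(u)$, so by minimality over the odd class $E(P)=E(u)$ and $P=u$, i.e.\ $v\le u\le\hat v$. Finally, if $v$ touches $u$ at $x_{0}$, then $u-v\ge0$ attains its minimum $0$ there, so value and derivative agree at $x_{0}$ and uniqueness for \eqref{ode} gives $v\equiv u$; otherwise $v$ and $u$ never meet.

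I expect the last comparison to be the main obstacle: making the median step rigorous, i.e.\ $E\bigl(\max(v,\min(u,\hat v))\bigr)\le E(u)$, requires the characterization of $v$ and $\hat v$ as simultaneous sub- and supersolutions and the check that the clamped function remains admissible after the odd--periodic extension. Everything else is a verbatim adaptation of the $N$-bump argument for Proposition~\ref{p4} in \cite{kink}, with $H^{1}(\R)$ replaced by $H^{1}_{\mathrm{per}}([0,T])$ and the single forced zero at the origin replaced by the lattice $\{kT/2\}$.
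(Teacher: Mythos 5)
Your argument is correct, and for the first two assertions it is essentially the paper's: the min/max rearrangement identity $E(\min(v_1,v_2))+E(\max(v_1,v_2))=E(v_1)+E(v_2)$ plus the corner/tangency dichotomy and ODE uniqueness is just a repackaging of the paper's ``two intersections per period force coincidence, one intersection forces tangency'' argument, and your observation that distinct odd minimizers must still meet at the lattice $\{kT/2\}$ (so that ``only at the origin'' has to be read modulo the forced zeros) is a correct reading of the statement. Where you genuinely diverge is the third assertion. The paper proceeds by a case analysis on the number and location of the intersections of $v$ with $u$ on the two half-periods, using the reflected minimizer $\hat v(x)=-v(-x)$ and a sign bookkeeping to show that in the only nontrivial configuration $v$ must cross $\hat v$, hence $v\equiv\hat v$ is odd and coincides with $u$. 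You instead order $v\le\hat v$ by the first part and sandwich $u$ via the odd competitor $P=\max(v,\min(u,\hat v))$; this is more systematic and yields the stronger structural conclusion $v\le u\le\hat v$, at the price of the truncation estimate. Two points there need tightening. First, ``clamping between an ordered pair of solutions of the Euler--Lagrange equation does not increase $E$'' is false for general solutions; what saves you is that $v$ and $\hat v$ are global minimizers, so the estimate follows from applying the rearrangement identity twice: $E(\min(\hat v,u))\le E(u)$ because $E(\max(\hat v,u))\ge E(\hat v)$, and then $E(P)\le E(\min(\hat v,u))$ because $E(\min(v,\min(\hat v,u)))\ge E(v)$. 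Second, $E(P)=E(u)$ gives that $P$ is an odd minimizer, not immediately that $P=u$; the clean way to finish is to note that equality in the chain forces $\min(v,u)$ and $\max(\hat v,u)$ to be global minimizers, so by the first part $v\le u\le\hat v$ unless $u$ avoids $[v,\hat v]$ entirely, after which tangency at any contact point and ODE uniqueness give $v\equiv u$. You should also dispose of the degenerate case $v\equiv\hat v$ (then $v$ is odd, hence an odd minimizer, and the odd rearrangement plus the absence of a corner at $x=0$ for the $C^2$ periodic extension yields $v\equiv u$), which the paper handles under the headings $v(0)=0$ or $v(T/2)=0$.
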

\begin{proof}
It is easy to see that two global minimizers $v_1$ and $v_2$ that intersect twice within a period, coincide everywhere. Now, if $v_1$ and $v_2$ intersect only once within a period at a point $x_0$, we have for instance $v_1 \leq v_2$, and $v_1(x_0)=v_2(x_0)$, which implies that $v'_1(x_0)=v'_2(x_0)$. Therefore by the uniqueness result for O.D.E. we obtain $v_1 \equiv v_2$. This proves the first part of the statement of the theorem. It remains to prove that a global minimizer $v$ either does not intersect an odd minimizer $u$ or coincides with it. Let $\hat v(x)=-v(-x)$ be the symmetric global minimizer. It is clear that if $v$  intersects twice $u$ on the interval $(0.T/2)$ or $(T/2,T)$, then $v\equiv u$. If $v(0)=0$, then $v \equiv \hat v$ and $v$ is odd, since $\hat v(0)=0$. Thus, $v$ also coincides with $u$. Similarly, we have $v \equiv u$, provided $v(T/2)=0$. Now, suppose that $v$ intersects $u$ 
\begin{itemize}
\item only once in the interval $[-T/2,0]$ at a point $x_1 \in (-T/2,0)$, 
\item and only once in the interval $[0,T/2]$ at a point $x_2 \in (0,T/2)$. 
\end{itemize}
Without loss of generality we assume that $v(x)<u(x) \Leftrightarrow x \in [-T/2,x_1)\cup (x_2,T/2]$. If $x_1=-x_2$, it follows that $v\equiv \hat v\equiv u$, since $v(x_1)=\hat v(x_1)$. Otherwise, if $x_2<|x_1|$, we have $v(-x_2)>\hat v(-x_2)=u(-x_2)$, and $v(-x_1)<\hat v(-x_1)=u(-x_1)$. This implies that $v$ intersects $\hat v$, and thus $v\equiv \tilde v\equiv u$. Assuming that $x_2>|x_1|$, we reach the same conclusion. Therefore, the proof of the theorem is complete.
\end{proof}

\begin{proposition}\label{p3per}(Zeros of the global minimizer)
\begin{itemize}
\item[(i)] Assuming that $\alpha>0$, the minimizer $v$ constructed in Theorem \ref{th1} 
 has at most two zeros within a period.
\item[(ii)] If assumption (B) holds, and if $\epsilon$ and $\alpha>0$ remain in a bounded interval, there exists a constant $\delta>0$ such that when $\frac{\epsilon}{\alpha}<\delta$, the minimizer $v$ has within a period exactly two zeros, $\bar x_1$ and $\bar x_2$. Moreover, $v(x)>0  \Leftrightarrow x \in (\bar x_1,\bar x_2)+T\Z$, with
$|\bar x_1| \leq \xi+\mathcal{O }\big(\sqrt{\epsilon/\alpha}\big)$, and $|\bar x_2-T/2|\leq \mathcal{O }\big(\sqrt[3]{\epsilon/\alpha}\big)$.
\end{itemize}
\end{proposition}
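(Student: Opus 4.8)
The plan is to transpose the proofs of Proposition~\ref{p3} and Lemma~\ref{p3bbis} to one period, after noting the change in the geometry of $\mu$: under the periodic hypotheses \eqref{hyp3}, within a period $\mu$ has a single maximum at $0$ and a single minimum at $T/2$, and $f=-\mu'/2$ satisfies $f>0$ on $(0,T/2)$, $f<0$ on $(T/2,T)$, with $f(0)=f(T/2)=0$. So the half-periods play the r\^ole of the fixed-sign regions of $f$ that organized the proof of Proposition~\ref{p3}.

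\emph{Part (i).} First I would record the two tools used in Proposition~\ref{p3}(i). From \eqref{ode}, at any zero $z$ of $v$ one has $\epsilon^2 v''(z)=-\epsilon\alpha f(z)$, so $v$ admits no degenerate zero $v(z)=v'(z)=0$ where $f(z)\neq 0$; hence every zero in $(0,T/2)\cup(T/2,T)$ is transversal and only $0$ and $T/2$ can carry a one-sided tangential zero. Secondly, on any subinterval $[a,b]\subset[0,T/2]$ (where $f\geq 0$) with $v(a)=v(b)=0$ and $v<0$ on $(a,b)$, replacing $v$ by $|v|$ on $[a,b]$ yields a competitor in $H^1_{\mathrm{per}}([0,T])$ leaving $\int \tfrac\epsilon2|v'|^2$, $\int\mu v^2$ and $\int v^4$ unchanged while strictly lowering $-\int\alpha f v$, contradicting minimality; symmetrically on $[T/2,T]$ for a positive interior excursion. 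Thus $v$ has no interior negative excursion in $[0,T/2]$ and no interior positive excursion in $[T/2,T]$.

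\emph{Part (i), the count.} I would then combine this reflection principle with the convexity read from \eqref{ode} in $\{\mu<0\}$ around $T/2$: there $v(v^2-\mu)$ has the sign of $v$, so $v$ is concave on $(\xi,T/2)\cap\{v<0\}$ and convex on $(T/2,T-\xi)\cap\{v>0\}$. Running the same short case analysis as in Proposition~\ref{p3}(i) over the admissible sign patterns in $[0,T)$, the reflection principle excludes every configuration giving three sign changes per period and the non-degeneracy of zeros disposes of the remaining tangential cases, leaving at most two zeros per period.

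\emph{Part (ii).} Assuming (B), part (i) gives at most two zeros, and since a periodic function changes sign an even number of times it remains to exhibit and locate exactly two. The zero near the edge of the bump is handled by the $\mathcal O(\sqrt{\epsilon/\alpha})$ argument of Proposition~\ref{p3}(ii): for $x_0$ slightly beyond $\xi$ with $v(x_0)>0$ one shows $v'(x_0)>0$, integrates $v''\geq-\tfrac{\alpha}{\epsilon}f$ against the uniform bound $M$ of Proposition~\ref{p0per}(ii) over a length $\mathcal O(\sqrt{\epsilon/\alpha})$, and concludes $v<0$ beyond $\xi+\mathcal O(\sqrt{\epsilon/\alpha})$, i.e. $|\bar x_1|\leq\xi+\mathcal O(\sqrt{\epsilon/\alpha})$. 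The zero near the minimum is the giant-kink estimate of Lemma~\ref{p3bbis}, now centered at the nondegenerate minimum $T/2$: since $f'(T/2)<0$ gives $-f(x)\geq\lambda|x-T/2|$ nearby, balancing $v''\sim\tfrac{\alpha}{\epsilon}(-f)$ against $M$ forces any sign change to lie within $\mathcal O(\sqrt[3]{\epsilon/\alpha})$ of $T/2$, i.e. $|\bar x_2-T/2|\leq\mathcal O(\sqrt[3]{\epsilon/\alpha})$. These same two estimates show $v$ is not of constant sign, so there are exactly two zeros, and tracking signs across the favorable half-period gives $v>0\Leftrightarrow x\in(\bar x_1,\bar x_2)+T\Z$.

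The main obstacle will be part (i): the reflection principle gives no gain precisely at $0$ and $T/2$, where $f$ vanishes, so excluding the tangential three-zero configurations there requires the convexity input from \eqref{ode} in $\{\mu<0\}$ to be combined carefully with the non-degeneracy of zeros. In part (ii) the only genuinely new computation is the cube-root localization of $\bar x_2$, which is exactly where the hypothesis $f'(T/2)<0$ is used, mirroring Lemma~\ref{p3bbis}.
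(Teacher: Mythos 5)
Your proposal is correct and is essentially the paper's own route: Section~5 gives no written proof of Proposition~\ref{p3per}, deferring explicitly to the two-bump arguments, and your adaptation (the reflection principle plus transversality of zeros where $f\neq 0$ for the count in part (i); the square-root and cube-root localizations of Proposition~\ref{p3}(ii) and Lemma~\ref{p3bbis}, recentred at $\pm\xi$ and at the non-degenerate minimum $T/2$ where $f'(T/2)<0$, for part (ii)) is exactly the intended adjustment. One repair for the write-up: the step ``$v(x_0)>0\Rightarrow v'(x_0)>0$'' in Proposition~\ref{p3}(ii) is proved there using the decay of $v$ at $-\infty$, which has no periodic analogue, so both localizations should be run through the quantitative-convexity mechanism of the first claim of Lemma~\ref{p3bbis} (which needs only the uniform bound of Proposition~\ref{p0per}(ii) on a bounded interval); and the signs in your part (ii) should read $v<0$ to the left of $-\xi-\mathcal{O}\big(\sqrt{\epsilon/\alpha}\big)$ and $v>0$ to the right of $\xi+\mathcal{O}\big(\sqrt{\epsilon/\alpha}\big)$, since $f>0$ on $(0,T/2)$.
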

Now, we renormalize the energy as follows:
\begin{equation}\label{renorm2}
\mathcal{E}(u)
:=
E(u, [-T/2,T/2])+\int_{-T/2}^{T/2}\Big( -\frac{\sigma^4(x)}{4\epsilon}+\frac{\mu(x)\sigma^2(x)}{2\epsilon}+\alpha f(x)\sigma(x)\Big)\dd x ,
\end{equation} and we have under assumption (A) the bounds:  
$$\limsup_{\epsilon\to 0, \alpha\to \alpha_0}\mathcal{E}(v_{\epsilon,\alpha})\leq \frac{2\alpha_0}{3}\big((\mu(0))^{3/2}-(\mu(T/2))^{3/2}\big),$$

$$\limsup_{\epsilon\to 0, \alpha\to \alpha_0}\mathcal{E}(v_{\epsilon,\alpha})\leq \limsup_{\epsilon\to 0, \alpha\to \alpha_0}\mathcal{E}(u_{\epsilon,\alpha})\leq \frac{2\sqrt{2}}{3}\big((\mu(0))^{3/2}+(\mu(T/2))^{3/2}\big).$$

Similarly, under hypothesis (B), we obtain:
$$\limsup_{\epsilon\to 0, \alpha\to \alpha_0}\mathcal{E}(v_{\epsilon,\alpha})\leq \frac{2\alpha_0}{3}(\mu(0))^{3/2},$$

$$\limsup_{\epsilon\to 0, \alpha\to \alpha_0}\mathcal{E}(v_{\epsilon,\alpha})\leq \limsup_{\epsilon\to 0, \alpha\to \alpha_0}\mathcal{E}(u_{\epsilon,\alpha})\leq \frac{2\sqrt{2}}{3}(\mu(0))^{3/2}.$$
From these bounds, proceeding as in Theorem \ref{p10sh}, we deduce:

\begin{theorem}\label{p10shquaper} 
Assume (A), let $\alpha^{**}=\sqrt{2}\frac{(\mu(0))^{3/2}+(\mu(T/2))^{3/2}}{(\mu(0))^{3/2}-(\mu(T/2))^{3/2}}$, and let $v_{\epsilon,\alpha}$ be a global minimizer. Then, 
\begin{itemize}
\item[(i)] as $\epsilon \to 0$ and $\alpha \to \alpha_0 \in (0,\alpha^{**})$, $v_{\epsilon,\alpha}$ does not vanish,
\item[(ii)] as $\epsilon \to 0$ and $\alpha \to \alpha_0 \in (\alpha^{**},\infty)$, $v_{\epsilon,\alpha}$ has up to the symmetry $\hat v=-v(-x)$, two zeros $\bar x_1(\epsilon,\alpha)<\bar x_2(\epsilon,\alpha)$ such that $0\geq\bar x_1(\epsilon,\alpha) \to 0$, and $T/2\leq\bar x_2(\epsilon,\alpha)\to T/2$. Moreover, $v(x)>0  \Leftrightarrow x \in (\bar x_1,\bar x_2)+T\Z$.
\end{itemize}

\end{theorem}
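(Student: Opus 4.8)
The plan is to transplant the compactness-and-comparison argument of Theorem~\ref{p10sh} to the periodic renormalized energy \eqref{renorm2}, exploiting that under (A) one has $\mu>0$ on all of $\R$, so the $\{\mu<0\}$ integrals in the expansion \eqref{renorm3} disappear and all transitions are standard kinks. Fix sequences $\epsilon_n\to0$ and $\alpha_n\to\alpha_0$, and let $v_n$ be the corresponding periodic minimizers. By Proposition~\ref{p3per}(i) each $v_n$ has at most two zeros per period; since a continuous $T$-periodic function changes sign an even number of times, after passing to a subsequence only two scenarios survive: $v_n$ has no sign change, or $v_n$ has exactly two zeros $\bar x_1(n)<\bar x_2(n)$ at which it changes sign (a non-sign-changing ``touching'' zero only raises the energy above the zero-free state and is discarded by the final comparison). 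Writing $l_1\le l_2$ for the limits of the zeros and rescaling $\tilde v_n(s)=v_n(\bar x_i(n)+s\epsilon_n)$, the uniform bounds of Proposition~\ref{p0per}(ii) and the O.D.E. \eqref{ode} give $C^1_{\mathrm{loc}}$ convergence to the heteroclinic $\pm\sqrt{\mu(l_i)}\tanh\!\big(s\sqrt{\mu(l_i)/2}\big)$ exactly as in Theorem~\ref{p10sh}, while off the zeros $v_n\to\pm\sqrt\mu$ according to the sign pattern.

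I would then read off the limiting renormalized energy of each scenario. For the zero-free configuration, taken positive up to the symmetry $\hat v$, dominated convergence applied to the forcing term $-\int\alpha f v_n+\int\alpha f\sigma$, together with $\sigma\to\sgn(f)\sqrt\mu$ and $f=-\mu'/2$, shows that only the part over $\{f<0\}=(-T/2,0)$ survives, so that
\[
\liminf_n\mathcal E(v_n)\ \ge\ 2\alpha_0\int_{-T/2}^{0}|f|\sqrt\mu=\tfrac{2\alpha_0}{3}\big(\mu(0)^{3/2}-\mu(T/2)^{3/2}\big),
\]
the remaining kinetic and potential-well terms being nonnegative. For the two-zero configuration with $v_n>0$ between its zeros, a Fatou argument on the rescaled energies near $\bar x_1(n)$ and $\bar x_2(n)$ yields the transition cost $\tfrac{2\sqrt2}{3}\big(\mu(l_1)^{3/2}+\mu(l_2)^{3/2}\big)$, and the same evaluation of the forcing term contributes $-\tfrac{2\alpha_0}{3}\big(\mu(l_1)^{3/2}-\mu(l_2)^{3/2}\big)+\tfrac{2\alpha_0}{3}\big(\mu(0)^{3/2}-\mu(T/2)^{3/2}\big)$, so that
\[
\liminf_n\mathcal E(v_n)\ \ge\ \tfrac{2}{3}\big[(\sqrt2-\alpha_0)\mu(l_1)^{3/2}+(\sqrt2+\alpha_0)\mu(l_2)^{3/2}\big]+\tfrac{2\alpha_0}{3}\big(\mu(0)^{3/2}-\mu(T/2)^{3/2}\big).
\]

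Next I would minimize this two-zero bound over admissible limit positions and compare it with the upper bounds stated before the theorem. Because $\sqrt2+\alpha_0>0$ always while $\sqrt2-\alpha_0<0$ precisely when $\alpha_0>\sqrt2$, for $\alpha_0>\sqrt2$ the bound is minimized by taking $\mu(l_1)$ as large and $\mu(l_2)$ as small as possible, i.e. $l_1=0$ (the maximum of $\mu$) and $l_2=T/2$ (its minimum); there the forcing contributions cancel and the bound collapses to the $\alpha$-independent value $\tfrac{2\sqrt2}{3}\big(\mu(0)^{3/2}+\mu(T/2)^{3/2}\big)$. Setting this equal to the zero-free value $\tfrac{2\alpha_0}{3}\big(\mu(0)^{3/2}-\mu(T/2)^{3/2}\big)$ gives exactly $\alpha_0=\alpha^{**}$; the zero-free energy is strictly smaller for $\alpha_0<\alpha^{**}$ and strictly larger for $\alpha_0>\alpha^{**}$ (for $\alpha_0<\sqrt2$ the two-zero bound is only approached as the two zeros collapse at $T/2$ and is even larger, so the zero-free state still wins). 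Since $\limsup_n\mathcal E(v_n)$ is bounded above by the smaller of the two candidate values (by the upper bounds before the theorem) yet bounded below by the value of $v_n$'s own configuration, the minimizer must adopt the cheaper configuration: the zero-free one for $\alpha_0\in(0,\alpha^{**})$, and the two-zero one with $l_1=0$, $l_2=T/2$ for $\alpha_0\in(\alpha^{**},\infty)$, which is assertions (i) and (ii) together with the profile \eqref{sh11}.

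Finally, the one-sided localization $0\ge\bar x_1$ and $\bar x_2\ge T/2$ follows cleanly from Proposition~\ref{p4per}: choosing the representative with $v(0)\ge0$, if $v(0)>0$ then $v$ and $\hat v$ are distinct minimizers that cannot intersect, so $v>\hat v$ everywhere; evaluating at $0$ and, using $v(-T/2)=v(T/2)$, at $T/2$ gives $v(0)>0$ and $v(T/2)>0$, whence $\bar x_1<0<T/2<\bar x_2$, while $v(0)=0$ forces $v$ odd with $\bar x_1=0$, $\bar x_2=T/2$. The main obstacle I anticipate is not the soft compactness but the exact bookkeeping of the forcing term $-\int\alpha f v+\int\alpha f\sigma$ on each subinterval and the verification that the two competing limiting energies cross precisely at the stated $\alpha^{**}$; one must also confirm that the two-zero energy functional is minimized uniquely at $(0,T/2)$ and rule out the degenerate and touching-zero configurations, since this is what simultaneously pins down the number of zeros and their limit locations.
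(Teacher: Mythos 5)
Your proposal is correct and follows essentially the same route as the paper, which proves this theorem by carrying the compactness/rescaling/Fatou lower-bound machinery of Theorem \ref{p10sh} over to the periodic renormalized energy \eqref{renorm2} and comparing with the two upper bounds stated just before the statement (whose crossing point is exactly $\alpha^{**}$), with the one-sided localization of the zeros obtained, as you do, from the non-intersection property of Proposition \ref{p4per}. Your bookkeeping of the forcing term and the identification of $(0,T/2)$ as the unique minimizer of the two-zero limiting energy match the paper's intended argument.
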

\begin{theorem}\label{p10shterper} 
Assume (B), and let $v_{\epsilon,\alpha}$ be a global minimizer. Then,
as $\epsilon \to 0$ and $\alpha \to \alpha_0>0$, $v_{\epsilon,\alpha}$ has up to the symmetry $\hat v=-v(-x)$, two zeros $\bar x_1(\epsilon,\alpha)<\bar x_2(\epsilon,\alpha)$ (cf. Proposition \ref{p3per} (ii)) such that 
\begin{itemize}
\item[(i)]when $\alpha \to \alpha_0 \in (0,\sqrt{2})$, $\bar x_1(\epsilon,\alpha) \to -\xi $, and $T/2\leq\bar x_2(\epsilon,\alpha)\to T/2$,
\item[(ii)] when $\alpha \to \alpha_0 \in (\sqrt{2},\infty)$, $0 \geq\bar x_1(\epsilon,\alpha) \to 0$, $T/2\leq \bar x_2(\epsilon,\alpha)\to T/2$. 
\end{itemize}
\end{theorem}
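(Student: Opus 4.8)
The plan is to transcribe the proof of Theorem~\ref{p10sh} to a single period $[-T/2,T/2]$, working with the periodic renormalized energy \eqref{renorm2}, and to reduce the whole question to minimizing an \emph{effective energy} in the single unknown $l_1=\lim\bar x_1$. First I fix sequences $\epsilon_n\to 0$, $\alpha_n\to\alpha_0$ and global minimizers $v_n=v_{\epsilon_n,\alpha_n}$. By Proposition~\ref{p3per}(ii), for $n$ large $v_n$ has exactly two zeros $\bar x_1(n)<\bar x_2(n)$ per period, with $v_n>0\Leftrightarrow x\in(\bar x_1(n),\bar x_2(n))+T\Z$, and with $\bar x_1(n)\in[-\xi-\mathcal O(\sqrt{\epsilon/\alpha}),\,\xi+\mathcal O(\sqrt{\epsilon/\alpha})]$, $\bar x_2(n)\to T/2$. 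Passing to a subsequence I set $l_1=\lim_n\bar x_1(n)\in[-\xi,\xi]$ and $l_2=\lim_n\bar x_2(n)=T/2$, so that $\mu(l_1)\geq 0$ while $\mu(l_2)=\mu(T/2)<0$.

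Next I establish a lower bound $\liminf_n\mathcal E(v_n)\geq\mathcal G(l_1)$. Rescaling $v_n(\bar x_i(n)+s\epsilon_n)$ and using the uniform bound of Proposition~\ref{p0per} with \eqref{ode}, I extract $C^1_{\mathrm{loc}}$ limits solving $\tilde V''-(\tilde V^2-\mu(l_i))\tilde V=0$ minimally with $\tilde V(0)=0$, exactly as in Theorem~\ref{p10sh}. At $l_2=T/2$, where $\mu<0$, the only bounded such solution is $\tilde V\equiv0$: this is a giant kink, along which $v_n=\mathcal O(\epsilon_n)$, costing no transition energy; at $\bar x_1$ the limit is the heteroclinic $\pm\sqrt{\mu(l_1)}\tanh(s\sqrt{\mu(l_1)/2})$ whenever $\mu(l_1)>0$. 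Expanding $\mathcal E(v_n)$ as in \eqref{renorm3} (now over one period), Fatou's lemma near $\bar x_1$ gives the transition energy $\frac{2}{3}\sqrt{2}\,(\mu(l_1))^{3/2}$, the $\{\mu<0\}$ integrals are nonnegative, and the leftover terms are $\mathcal O(\epsilon^{1/3})+\mathcal O(\epsilon|\ln\epsilon|)$ by \eqref{bound1}, \eqref{bound4}, \eqref{bound5}. For the forcing part $-\int\alpha f v_n+\int_{\mu>0}\alpha f\sigma$ I invoke dominated convergence ($v_n$ dominated by $M$, and $\sigma$ by \eqref{bound1}, \eqref{bound3}): on $\{\mu>0\}$ one has $v_n\to\sgn(x-l_1)\sqrt{\mu}$ and $\sigma\to\sgn(f)\sqrt{\mu}$, while on $\{\mu<0\}$ both vanish. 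Using $f=-\mu'/2$, the identity $\int_{l_1}^{\xi}f\sqrt\mu=\frac{1}{3}(\mu(l_1))^{3/2}$ and the oddness of $f\sqrt\mu$, the forcing limit equals $\frac{2\alpha_0}{3}\big((\mu(0))^{3/2}-(\mu(l_1))^{3/2}\big)$, whence
\[
\liminf_{n\to\infty}\mathcal E(v_n)\ \geq\ \mathcal G(l_1):=\frac{2}{3}(\sqrt{2}-\alpha_0)(\mu(l_1))^{3/2}+\frac{2\alpha_0}{3}(\mu(0))^{3/2}.
\]

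Finally I minimize $\mathcal G$ over $\mu(l_1)\in[0,\mu(0)]$ and compare with the upper bounds stated under hypothesis (B). For $\alpha_0<\sqrt2$ the coefficient $\sqrt2-\alpha_0$ is positive, so $\mathcal G$ is minimized exactly at $\mu(l_1)=0$, i.e. $l_1\in\{-\xi,\xi\}$, with value $\frac{2\alpha_0}{3}(\mu(0))^{3/2}$; for $\alpha_0>\sqrt2$ it is negative, so $\mathcal G$ is minimized exactly at the strict maximum $\mu(l_1)=\mu(0)$, i.e. $l_1=0$, with value $\frac{2\sqrt2}{3}(\mu(0))^{3/2}$. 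Since these minima coincide with the upper bounds, the squeeze forces $l_1=-\xi$ (up to the symmetry $\hat v$) in case (i) and $l_1=0$ in case (ii); because the minimizer of $\mathcal G$ is unique, every subsequence yields the same limit, giving the full convergence. The endpoint $l_2=T/2$ is already provided by Proposition~\ref{p3per}, and the one-sided bounds $\bar x_1\leq0$, $\bar x_2\geq T/2$ follow by comparing $v_n$ with its reflection $\hat v_n$ via the non-intersection property of Proposition~\ref{p4per}.

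The step I expect to be the main obstacle is the precise handling of the giant kink at $T/2$ together with the forcing integral over $\{\mu<0\}$: one must check both that the sign change there has amplitude $\mathcal O(\epsilon_n)$ (so it contributes neither transition energy nor a nonzero limit to $\int f v_n$) and that $\mathcal G$ is \emph{uniquely} minimized at the claimed points, since it is exactly this uniqueness that turns the energy squeeze into the stated identification of $\lim\bar x_1$.
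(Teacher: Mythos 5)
Your proposal is correct and follows essentially the same route as the paper, which proves this theorem by carrying over the lower-bound/upper-bound squeeze of Theorem \ref{p10sh} to one period (the effective energy $\mathcal G(l_1)=\frac{2}{3}(\sqrt{2}-\alpha_0)(\mu(l_1))^{3/2}+\frac{2\alpha_0}{3}(\mu(0))^{3/2}$ is exactly the one-variable version of the function $G$ used there, and its minimum matches the stated periodic upper bounds in each regime), with the exact location and the one-sided inequalities $\bar x_1\leq 0\leq T/2\leq\bar x_2$ obtained, as in the paper's remark, from the non-intersection properties of Propositions \ref{p2per} and \ref{p4per}. Your treatment of the giant kink at $T/2$ and of the forcing integral agrees with the paper's computation, so no gap remains.
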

\begin{remark}
Unlike in the case where $\mu$ has two bumps (cf. Theorem \ref{p10shter}), when $\mu$ is periodic the location of the zeros of the minimizers is precisely determined in Theorem \ref{p10shterper}, thanks to Propositions \ref{p2per} and \ref{p4per}. Also note that when $\mu>0$ is periodic, there is a unique threshold $\alpha^{**}$ ruling the regime of the minimizers (cf. Theorem \ref{p10shquaper}), which has a similar expression as the second threshold value of $\alpha$ appearing in Theorem \ref{p10sh}.

\end{remark}
\begin{remark}
Theorem \ref{p4c} does not hold when $\mu$ is periodic, in particular when $\int_0^T \mu>0$. Indeed, we can see that when $\alpha=0$ and $\epsilon >0$, we have for a small constant $k$:
$$E(v)\leq E(k)=\frac{k^2}{4\epsilon}\Big( -2\int_0^T \mu(x)\dd x +k^2 \Big)<0,$$
thus the global minimizer is never identically zero.
On the other hand, when $\alpha=0$, the odd mnimizer $u \equiv 0$ if and only if $\epsilon\geq\epsilon_1$, where 
$$
0<\epsilon_1 =\sup_{ \phi \in H^1_0([0,T/2], \phi \neq 0}\frac{(\int_0^{T/2}\mu(x)\phi^2(x)\dd x)^{1/2}}
{(\int_{\R}|\phi_x|^2)^{1/2}}.$$
\end{remark}
\begin{remark}
However, Theorem \ref{p4d} still holds when $\mu$ is periodic, and we still have the convergence in $C^1_{\mathrm{ loc}}(\R)$ of the rescaled odd minimizers:
\begin{equation}
\lim_{ \alpha\to \infty} \alpha^{-1/3}u_{\epsilon,\alpha}(x_0+\alpha^{-1/3}s)=(\epsilon f(x_0))^{1/3}, \ \forall x_0\in \R \text{ fixed}.\nonumber
\end{equation} .
\end{remark}
\section*{Acknowledgements}
M.G.C and M.D.Z. thank for the financial support of
ANID-Millenium Science Initiative Program-ICN17 012,
Chile (MIRO). M.K. thanks for the financial support
of FONDECYT 1250156 and CMM ANID project
FB210005. The research project of P.S. is implemented within the framework of H.F.R.I call
``Basic research Financing (Horizontal support of all Sciences)'' under the National Recovery and Resilience Plan ``Greece 2.0'' funded by the European Union - NextGenerationEU (H.F.R.I. Project Number: 016097).


\providecommand{\bysame}{\leavevmode\hbox to3em{\hrulefill}\thinspace}
\providecommand{\MR}{\relax\ifhmode\unskip\space\fi MR }
\providecommand{\MRhref}[2]{%
  \href{http://www.ams.org/mathscinet-getitem?mr=#1}{#2}
}
\providecommand{\href}[2]{#2}

%
%
%
%
%
%
\end{document}